\definecolor{rdeca}{RGB}{144,26,30}
\definecolor{modra}{RGB}{0, 0, 120}
\definecolor{temna}{RGB}{0, 0, 90}
\def\acts{\curvearrowright}
\newcommand{\res}{\upharpoonright}
\newcommand{\setdef}{\;\vert\;}
\newcommand{\sq}{\subseteq}
\newcommand{\F}{\mathbb{F}}
\newcommand{\Z}{\mathbb{Z}}
\newcommand{\G}{\mathcal{G}}
\newcommand{\trik}{\triangleleft}
\newcommand{\baire}{\omega^\omega}
\newcommand{\injfin}{(\omega)^{<\omega}}
\newcommand{\injinf}{(\omega)^\omega}
\newcommand{\injfininf}{(\omega)^{\leq \omega}}
\DeclareMathOperator{\dom}{dom}
\DeclareMathOperator{\lh}{lh}
\DeclareMathOperator{\id}{id}
\DeclareMathOperator{\fix}{fix}
\DeclareMathOperator{\range}{range}
\DeclareMathOperator{\is}{Is}
\def\acts{\curvearrowright}
\theoremstyle{plain}
\newtheorem{theorem}{Theorem}[section]
\newtheorem{lemma}[theorem]{Lemma}
\newtheorem{proposition}[theorem]{Proposition}
\newtheorem{question}[theorem]{Question}
\newtheorem{claim}[theorem]{Claim}
\newtheorem{subclaim}[theorem]{Subclaim}
\newtheorem{fact}[theorem]{Fact}
\theoremstyle{definition}
\newtheorem{definition}[theorem]{Definition}
\newcommand{\Asterisk}{\mathop{\scalebox{1.6}{\raisebox{-0.15ex}{$\ast$}}}}%
\title[Definability of mcgs]{Definability of maximal cofinitary groups}
\author[Mejak]{Severin Mejak}
\address{Department of Mathematical Sciences\\ University of Copenhagen\\ Universitetsparken 5\\ 2100 Copenhagen, Denmark}
\email{seme@math.ku.dk}
\author[Schrittesser]{David Schrittesser}
\address{Institute for Advanced Study in Mathematics\\ Harbin Institute of Technology\\
92 West Da Zhi Street\\ Harbin, Heilongjiang 150001\\ China \emph{and}\vspace{-0.3cm}}
\address{University of Toronto\\ 40 St.\ George Street\\ Toronto\\ Ontario\\ Canada M5S 2E4}
\email{david.schrittesser@univie.ac.at}
\subjclass[2020]{03E05, 03E15, 20B07}
\keywords{cofinitary groups, definability, maximality, orbits}
\begin{document}
    \maketitle
    
    \begin{abstract}
        We present a proof of a result, previously announced by the second author,
        that there is a closed (even $\Pi^0_1$) set generating an $F_\sigma$ (even $\Sigma^0_2$)  maximal cofinitary group (short, mcg) which is isomorphic to a free group.
        In this isomorphism class, this is the lowest possible definitional complexity of an mcg.
    \end{abstract}
    
    \section*{Introduction}
    
    In \cite{cameron}, Cameron introduced the notion of a cofinitary subgroup of $S_\infty$ as follows. An element $g \in S_\infty \setminus \{\id_\omega\}$ is called \emph{cofinitary} if it has only finitely many fixed points, i.e., there is some $n \in \omega$ so that for every $m > n$ it holds that $g(m) \neq m$. Then a subgroup $G \leq S_\infty$ is \emph{cofinitary} if every $g \in G \setminus \{\id_\omega\}$ is cofinitary. We write ``cofinitary group'' in place of the longer ``cofinitary subgroup of $S_\infty$''. Cofinitary groups were studied before \cite{cameron} under different names; e.g., in \cite{adeleke} they are called \emph{sharp}. See \cite{cameron} for combinatorial properties of cofinitary groups and \cite{truss} and \cite{adeleke} for results on their embeddings.
    
    A cofinitary group is \emph{maximal} (we write \emph{mcg} for short) if it is not strictly contained in any cofinitary group. Mcgs were first considered in \cite{truss} and \cite{adeleke}. The first result which made mcgs interesting to set-theorists was established by Adeleke in \cite{adeleke} and asserted that mcgs are always uncountable. Current research on mcgs is divided between answering two questions about mcgs. The first question asks what are the possible cardinalities of mcgs, and in particular, what is the least possible size of an mcg. See e.g. \cite{brendle_spinas_zhang} and \cite{hrusak_steprans_zhang} for early results which answer parts of this question. The second question asks what is the least achievable complexity of an mcg. This paper concerns itself with the latter.
    
    The first breakthrough in the study of definability of mcgs was when Gao and Zhang (see \cite{gao_zhang}) established that if $V = L$ there is a $\Pi^1_1$ set which generates an mcg. This was improved by Kastermans in \cite{kastermans_compact} to the existence of a $\Pi^1_1$ mcg under the assumption $V = L$. Kastermans also proved the beautiful results that no mcg can be $K_\sigma$ (see \cite{kastermans_compact}) and that no mcg can have infinitely many orbits (see \cite{kastermans_orbits}).

    Kastermans' result on definability inspired \cite{fischer_schrittesser_tornquist_mcg}, in which Fischer, T\"{o}rnquist and the second author constructed a Cohen-indestructible $\Pi^1_1$ mcg. The next milestone was achieved (to some surprise) soon afterwards, when Horowitz and Shelah established in \cite{horowitz_shelah} that it is provable in $\mathsf{ZF}$ (without the axiom of choice) that there is a Borel mcg. Using ideas of \cite{horowitz_shelah}, the second author proved (again without use of the axiom of choice) in \cite{constructing_mcg} that there is actually an arithmetical mcg, and announced that the construction from \cite{constructing_mcg} can be improved to produce an $F_\sigma$ (actually even $\Sigma^0_2$) mcg.
    
    In this paper we use the idea of the construction from \cite{constructing_mcg} and enhance it with ideas from \cite{compactness_med} to get a construction (not employing the axiom of choice) proving the following.
    
    \begin{theorem}\label{sigma_mcg}
        There is a $\Pi^0_1$ subset of $\baire$ which freely generates a $\Sigma^0_2$ mcg.
    \end{theorem}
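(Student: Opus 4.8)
The plan is to realize the group as $G=\langle\mathcal A\rangle$, where $\mathcal A=[T]$ is the branch set of a pruned, computable (hence $\Pi^0_1$) tree $T\sq\bairet$ every branch of which is a cofinitary permutation of $\omega$; thus $\mathcal A$ is a $\Pi^0_1$ subset of $\baire$. There are then four things to verify: that $\mathcal A$ freely generates $G$; that $G$ is cofinitary; that $G$ is maximal; and that $G$ is $\Sigma^0_2$. The tree $T$ is built by the recursive strategy of \cite{constructing_mcg}, with its coding apparatus refined using ideas from \cite{compactness_med} so as to make the last, complexity-theoretic clause available; as in those papers, no form of choice is used.

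It is convenient to isolate two combinatorial demands which together suffice:
\begin{enumerate}[(i)]
    \item for every non-trivial reduced word $w(x_1,\dots,x_k)$ and all distinct $a_1,\dots,a_k\in\mathcal A$, the permutation $w(a_1,\dots,a_k)$ has only finitely many fixed points;
    \item for every $g\in S_\infty\setminus G$ there is $a\in\mathcal A$ with $a(n)=g(n)$ for infinitely many $n$.
\end{enumerate}
Since every non-identity element of $G$ is represented by a non-trivial reduced word over distinct generators, clause (i) says at once that no such word equals $\id_\omega$ --- i.e.\ $\mathcal A$ freely generates $G$ --- and that $G$ is cofinitary. For maximality, take $g\in S_\infty\setminus G$: if $g$ is not cofinitary then $\langle G,g\rangle\ni g$ is already not cofinitary, and otherwise fix $a\in\mathcal A$ as in (ii), so that $a^{-1}g\in\langle G,g\rangle\setminus\{\id_\omega\}$ has the infinite set $\{n:g(n)=a(n)\}$ as its fixed-point set; either way $\langle G,g\rangle$ is not a cofinitary group and $G$ is maximal. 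When verifying (ii) we may moreover replace $g$ by $g^{-1}$ if needed and so assume that $g(n)<n$ for infinitely many $n$; this is used below to guarantee that the values of $g$ we must catch are small relative to the interval scale of the construction.

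The construction of $T$ proceeds along a fixed partition $\omega=\bigsqcup_n I_n$ into consecutive finite intervals with $|I_n|$ growing quickly. A node of $T$ at level $n$ is a partial injection prescribing a would-be generator on $I_0\cup\dots\cup I_{n-1}$, and its one-step extensions are the legal prescriptions on $I_n$. On $I_n$ every generator does two jobs at once. First, on a \emph{mixing part} of $I_n$ it acts as a derangement fitted to a shifted sub-block structure; a finite-injury book-keeping running over the countably many reduced words ensures that, for every reduced word $w$ and every tuple of generators, $w(\vec a)$ moves every point of every sufficiently large interval --- the engine behind (i). Second, on a \emph{coding part} of $I_n$ the generator writes, self-delimitingly and at positions on which other generators act by a transparent (essentially prefix-appending, invertible) rule, the finite data describing its action on the small-valued part of its graph together with an index from a dovetailed enumeration of all finite partial injections of $\omega$; because the coding rule is transparent and invertible, these codes survive composition and inversion and can be read back off of $w(\vec a)$. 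One arranges that every finite partial injection is offered cofinally often and that, branching through $T$, it can be extended one interval at a time by an arbitrary injection with values bounded by the current interval scale; throughout, ``no non-trivial relation holds so far'' is maintained. Let $T$ be the resulting tree and $\mathcal A=[T]$.

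Granting the construction, (i) and freeness are delivered by the derangement engine and the book-keeping. For (ii), given a cofinitary $g\in S_\infty\setminus G$ with $g(n)<n$ infinitely often, thin the graph of $g$ to an infinite partial injection supported on coding positions, one point per interval; by the small-values assumption this is a partial injection of the kind the tree was designed to realize, so some branch $a\in\mathcal A$ agrees with it, hence with $g$, infinitely often. For the complexity, the coding yields a continuous partial map sending a permutation $g$ to a candidate reduced word $w$ and candidate generators $a_1,\dots,a_k\in\baire$, read off from $g$ interval by interval; then $g\in G$ if and only if this reading never stalls (a $\Pi^0_1$ condition on $g$), produces $a_1,\dots,a_k\in[T]$ (again $\Pi^0_1$), and satisfies $w(a_1,\dots,a_k)=g$ (a $\Pi^0_1$ condition once $w$ and the $a_i$ are fixed), so that for each reduced word the corresponding part of $G$ is closed, and $G$ --- a countable union of these --- is $\Sigma^0_2$. (That $G$ cannot be $K_\sigma$, so that $\mathcal A$ cannot be compact and $T$ must be infinitely branching along the coding positions, is Kastermans' theorem \cite{kastermans_compact}, which the construction respects.) The crux is the single recursion of the previous paragraph: ``rich enough to meet every $g$'' (for (ii)), ``rigid enough to decode'' (for the $\Sigma^0_2$ bound), and ``no reduced word ever regains a fixed point'' (for (i)) all pull against one another, and making the interval/coding design and the book-keeping imported from \cite{constructing_mcg} and \cite{compactness_med} serve all three simultaneously is where the real work lies.
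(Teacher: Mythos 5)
Your outline has the right general silhouette (interval partition, coding for definability, catching for maximality, free-group structure), but at the two places where the actual work lies it substitutes assertions for mechanisms, and one of those assertions cannot work as stated. First, you propose to secure clause (i) --- that \emph{every} nontrivial reduced word in \emph{every} tuple of distinct branches is cofinitary --- by ``a finite-injury book-keeping running over the countably many reduced words.'' The requirements here are indexed not by words but by tuples of generators, of which there are continuum many (the generating set is a $\Pi^0_1$ set of branches, necessarily non-compact by Kastermans' theorem), so no countable injury argument can visit them; this is exactly why the paper does not argue word-by-word but builds the generators as the image of a single homomorphism $e$ of $\F(2^\omega\times 2^\omega\times 2^\omega)$ assembled from finite groups $G_n\acts I_n$ with $e_n$ injective on all words of length $\leq n$, so that cofinitariness of $\range(e)$ holds wholesale, and then controls the subsequent modifications by the spacedness/superspacedness machinery (Claim \ref{ad}, Claim \ref{superspaced_claim}) and the case analysis of Proposition \ref{G_cofinitary}.

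Second, your maximality step --- ``thin the graph of $g$ to coding positions; the tree is rich, so some branch agrees with it infinitely often'' --- ignores the central tension of the whole construction: forcing a generator to agree with an arbitrary cofinitary $g$ infinitely often can itself create a nontrivial word with infinitely many fixed points (for instance when $g$ already correlates with words in the group), and moreover, since the generating set is a fixed closed set built without choice, the catching branch must be \emph{canonically definable from} $g$, not merely shown to exist by richness of $T$. The paper's solution is structural: generators are indexed by triples $(x,c^0,c^1)\in 2^\omega\times 2^\omega\times 2^\omega$ where $x=\chi(g)$ literally codes the permutation to be caught, surgery is performed only on the sparse, canonically chosen set $B(g,c^0,c^1)$, and Lemma \ref{dichotomy} (chain versus antichain for $<^g_0$, $<^g_1$) decides whether $g$ is caught by an unmodified word of $e$ or by its own surgically altered generator $\dot e(\chi(g),c^0,c^1)$; superspacedness is then what transfers catching from $e$-words to $\dot e$-words in Proposition \ref{prop_maximality}. (Note also that the paper only needs catching by some $h\in\G$, a word, whereas your clause (ii) demands a single generator, which is a strictly stronger requirement sitting in even sharper conflict with (i).) Your $\Sigma^0_2$ argument is in the right spirit --- recover the word and generators continuously from a group element --- but in the paper this requires the ``recovered from''/``matching'' analysis of Proposition \ref{sigma_range}, which depends on the interval-respecting finite-group design; the claim that your codes ``survive composition and inversion'' is precisely what has to be engineered, not assumed. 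As it stands, the proposal does not constitute a proof.
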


    Since the topological interior of any cofinitary group is clearly empty, $\Pi^0_1$ is the best possible complexity of a set generating an mcg.

    By a result of Dudley (\cite{dudley}), there is no Polish topology on a group freely generated by continuum many generators. This clearly implies that $\Sigma^0_2$ is the best possible complexity of a freely generated mcg. Additional difficulties on potential constructions of $G_\delta$ mcgs are imposed by \cite{slutsky_automatic_cont}, in which Slutsky improved on Dudley's result and proved that if $G$ is a free product of groups and carries a Polish topology, then $G$ is countable. Since all presently known constructions of definable mcgs produce groups which decompose into free products, this means that current ideas are insufficient to produce a $G_\delta$ mcg.

    Finally, we introduce \emph{finitely periodic groups} (a relative to cofinitary groups) and prove an analogue to a well known theorem for mcgs.
    
    \subsection*{Structure of the paper}
    
    We begin by briefly recalling the notation we will be using. In Section \ref{constructing} we prove Theorem \ref{sigma_mcg}.
    We proceed with Section \ref{limitations}, where we discuss known obstructions to constructing $G_\delta$ mcgs.
    We conclude the paper with Section \ref{open_problems}, where we introduce finitely periodic groups and discuss open problems related to mcgs and maximal finitely periodic groups.
    
    \subsection*{Acknowledgments}

    The authors are thankful to Asger T\"{o}rnquist for referring them to \cite{dudley}, which led the authors to other work that built upon it. The results from these references form the core of Section \ref{limitations}.
    
    While working on this paper, S.~M.~was on a longer research visit to Caltech and is greatly appreciative to Alekos Kechris and other members of the Caltech logic group for valuable discussions and for making the visit possible. He was also on a shorter visit to the University of Vienna and is thankful to Vera Fischer for hosting him.
    
    The first author wishes to thank Asger T\"{o}rnquist for valuable conversations on related topics. He also thanks Andrea Bianchi for suggesting that we consider (what we now call) finitely periodic groups.
    
    S.~M.~gratefully acknowledges support from the following grant.
    
    \begin{center}
    \begin{vwcol}[widths={0.18, 0.82}, justify= flush, rule = 0pt]
    \hspace{0.5cm}
	\includegraphics{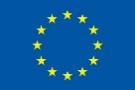}
	\vfill\eject
	\phantom{ }\hspace{-0.6cm}
	\vspace{-0.3cm}
	\phantom{ }\\
	\scriptsize{\noindent This project has received funding from the European Union’s Horizon 2020 research and\\innovation programme under the Marie Sk\l{}odowska-Curie grant agreement No 801199}
    \end{vwcol}
    \end{center}

    D.~S.\ acknowledges the support of the Government of Canada’s New Frontiers in Research Fund (NFRF), through project grant number 
    NFRFE-2018-02164.

    \section{Notation}
    
    We use $\omega$ to denote the set of all natural numbers. We think of natural numbers as ordinals, so for $n \in \omega$, $n = \{0, 1, \ldots, n-1\}$. No other properties of ordinals will be used and no knowledge of ordinals is required. For a set $A$, we let $|A|$ denote its cardinality.
    
    For a finite sequence $s$, we denote its length by $\lh(s)$. We also use $s(-1)$ to denote the last entry of $s$, i.e., $s(-1) := s(\lh(s) - 1)$. For a set $A$, we denote the set of all finite sequences in $A$ by $A^{< \omega}$, the set of all finite or infinite sequences in $A$ by $A^{\leq \omega}$ and the set of all infinite sequences in $A$ by $A^\omega$. We denote the subsets of these sets consisting only of \emph{injective sequences}, viz., sequences where no two entries are the same, by $(A)^{< \omega}$, $(A)^{\leq \omega}$ and $(A)^\omega$ respectively. 
    For $\alpha$ in $\omega + 1 = \omega \cup \{\omega\}$ we let $[A]^\alpha$ denote the collection of all subsets of $A$ of size $\alpha$, $[A]^{<\alpha}$ denote the collection of all subsets of $A$ of size less than $\alpha$ and $[A]^{\leq\alpha}$ denote the collection of all subsets of $A$ of size at most $\alpha$. For sequences $s, t \in A^{< \omega}$ (or in $(A)^{< \omega}$) we denote $t$ strictly end-extending $s$ by $s \sqsubset t$, and non-strictly by $s \sqsubseteq t$. Generally, we use $\bar{(\cdot)}$ to denote finitary versions of infinite objects, e.g., if $f \in \injinf$, then $\bar{f} \in \injfin$ represents some initial segment of $f$.
    
    If $s \in 2^{\leq \omega}$, we use $\hat{s}$ to denote the strictly increasing function enumerating $s^{-1}[\{1\}]$, the set which $s$ maps to 1.
    Sometimes, we write finite sequences $s \in 2^{< \omega}$ in ``binary format'', e.g., $(0, 1, 0, 0, 1)$ is written as $01001_2$. This is convenient when there are many consecutive zeros in a sequence, e.g., for $n, m \in \omega$,
    \[
    (\underbrace{0, \ldots, 0}_n, 1, \underbrace{0, \ldots, 0}_m, 1)
    \]
    is then written simply as $0^n 1 0^m 1_2$.
    
    As usually, the expression $(\forall^\infty n \in \omega)\, \phi(n)$ means that $\phi(n)$ holds for all but finitely many $n$, and the expression $(\exists^\infty n \in \omega)\, \phi(n)$ means that $\phi(n)$ holds for infinitely many $n$.

    For a group $G$ and $A \sq G$, we denote the subgroup of $G$, generated by $A$, with $\langle A \rangle$. To avoid confusion, we use the round brackets for tuples (and sequences), e.g., instead of the standard $\langle a, b \rangle$, we write $(a, b)$.
    
    We use $S_\infty$ to denote the Polish group of all permutations of $\omega$ and let $\id_\omega$ denote the unit of this group. For $g_0, \ldots, g_l \in S_\infty$ and $m \in \omega$, we define the \emph{path of $m$ under $g_l \circ \cdots \circ g_0$} to be the sequence $(m(i))_{i \in l + 1}$, inductively defined by setting $m(0) := m$ and for $i \leq l$ by $m(i + 1) := g_i(m(i))$. We will sometimes omit $\circ$ and write just $g_l \cdots g_0$.
    
    We denote the free group generated by a set $A$ with $\F(A)$. For $x_0, \ldots, x_k \in A$ and $i_0, \ldots, i_k \in \{-1, 1\}$, we occasionally use the vector notation $\vec{x}$ for the reduced word $x_k^{i_k} \cdots x_0^{i_0} \in \F(A)$ in order to emphasise that $\vec{x}$ is not necessarily a generator but a proper word (in case $k > 0$).
    
    Otherwise, we use the standard notation of \cite{cdst} and introduce any non-standard notation on the way.

    We use $\dashv$, instead of the otherwise used $\square$, to denote that the proof of a nested claim or a subclaim has been completed. 
    
    \section{Constructing a \texorpdfstring{$\Sigma^0_2$}{Sigma-0-2} mcg}\label{constructing}
    
    In this section we present a construction of a $\Sigma^0_2$ mcg. Our construction and presentation is based on \cite{constructing_mcg}, from which we also borrow some notation. Note that many of these notions have a slightly different definition in order to accommodate new coding components and in this way make the construction more definable. The construction of the definable mcg and the proofs of maximality and cofinitariness closely follow the respective proofs in \cite{constructing_mcg}. The main improvement over the construction from \cite{constructing_mcg} is that we augment it by ideas from \cite{compactness_med}. This will be apparent in the definition of our group and when proving that the constructed group is definable.
    
    It is not necessary for the reader to be familiar with either \cite{constructing_mcg} or \cite{compactness_med}. However, \cite{constructing_mcg} does occasionally contain more details, which the reader might find helpful in understanding this proof and \cite{compactness_med} contains the original idea of how maximal objects can be very definable in a simpler setting of maximal eventually different families, possibly making it easier for the reader to understand the present construction.
    
    \subsection{Cofinitary action, freely generated by continuum many generators.}
    
    We start by defining an injective map
    \[
    e: 2^\omega \times 2^\omega \times 2^\omega \to S_\infty,
    \]
    such that $\range(e) \sq S_\infty$ freely generates a cofinitary group. We follow the idea of the construction from \cite{constructing_mcg}, additionally making sure that the intervals are wider, allowing us to code the second and third component of the domain as well.

    Recall that we denote the free group, generated by $2^\alpha \times 2^\alpha \times 2^\alpha$, by $\F(2^\alpha \times 2^\alpha \times 2^\alpha)$ for any $\alpha \in \omega + 1\, (= \omega \cup \{\omega\})$. For a word
    \[
    w = (x_k, d^0_k, d^1_k)^{i_k} \cdots (x_0, d^0_0, d^1_0)^{i_0} \in \F(2^\alpha \times 2^\alpha \times 2^\alpha)
    \]
    we sometimes write $w = (w_0, w_1, w_2)$, where we implicitly define
    \[
    w_0 := x_k^{i_k} \cdots x_0^{i_0} \in \F(2^\alpha) \quad \text{and} \quad w_{j + 1} := (d^j_k)^{i_k} \cdots (d^j_0)^{i_0} \in \F(2^\alpha)
    \]
    for $j \in 2$. For a word $w$, we denote its length by $\lh(w)$. If $m \leq \alpha$ we let $r^\alpha_m$ be the group homomorphism
    \[
    r^\alpha_m : \F(2^\alpha \times 2^\alpha \times 2^\alpha) \to \F(2^m \times 2^m \times 2^m),
    \]
    which is defined on a generator $(x, d^0, d^1) \in \F(2^\alpha \times 2^\alpha \times 2^\alpha)$ as
    \[
    r^\alpha_m(x, d^0, d^1) = (x\res m, d^0 \res m, d^1 \res m).
    \]
    We will define a sequence of finite groups
    \[
    (G_n)_{n \in \omega}
    \]
    and group homomorphisms
    \[
    e_n: \F(2^n \times 2^n \times 2^n) \to G_n,
    \]
    along with transitive faithful actions
    \[
    \sigma_n : G_n \acts I_n,
    \]
    where each $I_n = [m_n, m_{n+1})$ is a non-empty interval in $\omega$, with $m_0 = 0$ and so that the intervals constitute a partition of $\omega$.
    We also define for each $n \in \omega$ the set
    \[
    W_n := \{ w \in \F(2^n \times 2^n \times 2^n) \, | \, \lh(w) \leq n \}.
    \]
    The requirements that we wish to satisfy are the following.  We will inductively construct $(G_n)_{n \in \omega}$, $(e_n)_{n \in \omega}$ and $(\sigma_n: G_n \acts I_n)_{n \in \omega}$ so that in addition to the above, for every $n \in \omega$ it holds that
    \begin{enumerate}
        \item $\sum_{m < n} |I_m| < |I_n|$;
        \item $e_n \res W_n$ is injective;
        \item $|I_0| \geq 7$.
    \end{enumerate}
    Since $2^0 = \{\emptyset\}$ and $W_0 = \{\emptyset\}$, it is easy to define $G_0$, $e_0$, $I_0$ and $\sigma_0: G_0 \acts I_0$ so that (3) is satisfied.
    
    Suppose that we have already defined $G_n, e_n, I_n$ and $\sigma_n: G_n \acts I_n$. Let $(w^i)_{i \in l}$ be some enumeration of $W_{n+1}$ for which $w^0 = \emptyset$, the empty word. We let $e_{n+1}^0$ be a function from $2^{n+1} \times 2^{n+1} \times 2^{n+1}$ to the set of partial injections from $l = \{0, \ldots, l-1\}$ to itself, as follows. For $(x, d^0, d^1) \in 2^{n+1} \times 2^{n+1} \times 2^{n+1}$ and $i, j \in l$ we set
    \[
    e_{n+1}^0(x, d^0, d^1)(i) = j \quad \text{if and only if} \quad w^j = (x, d^0, d^1) w^i.
    \]
    Let $e_{n+1}^1(x, d^0, d^1)$ be some extension of $e_{n+1}^0(x, d^0, d^1)$ to a permutation of $l$ and let
    \[
    G_{n+1}^0 := \langle e_{n+1}^1(x, d^0, d^1) \, | \, (x, d^0, d^1) \in 2^{n+1} \times 2^{n+1} \times 2^{n+1} \rangle \leq S_l,
    \]
    where $S_l$ denotes the permutation group of $l$.
    By the universal property of the free group, $e_{n+1}^1$ extends uniquely to a group homomorphism from $\F(2^{n+1} \times 2^{n+1} \times 2^{n+1})$ onto $G_{n+1}^0$, which we denote in the same way. Since $e_{n+1}^1(w^i)(0) = i$ for every $i \in l$, $e_{n+1}^1$ is injective on $W_{n+1}$.

    For some sufficiently large $k \in \omega$ let
    \[
    G_{n+1} := G_{n+1}^0 \times S_k.
    \]
    By ``sufficiently large'' we mean large enough so that when $I_{n+1}$ is defined to be of the same size as $G_{n+1}$, condition (1) holds.
    Moreover, let
    \[
    e_{n+1}%
    : \F(2^{n+1} \times 2^{n+1} \times 2^{n+1}) \to G_{n+1}
    \]
    be defined on a generator $(x, d^0, d^1) \in 2^{n+1} \times 2^{n+1} \times 2^{n+1}$ by
    \[
    e_{n+1}(x, d^0, d^1) := (e_{n+1}^1(x, d^0, d^1), 1_{S_k} ). 
    \]
    Let $I_{n+1}$ be the interval immediately to the right of the previously defined $I_n$, which has the same size as $G_{n+1}$. Fix some bijection $\Phi_{n+1}: G_{n+1} \to I_{n+1}$ and define the action
    \[
    \sigma_{n+1}: G_{n+1} \acts I_{n+1}
    \]
    by
    \[
    \sigma_{n+1}(g)(k) = \Phi_{n+1}(g \cdot (\Phi_{n+1})^{-1}(k)).
    \]
    Clearly, all desired conditions have been met and with this the inductive construction is completed.
    
    With the sequences $(G_n)_{n \in \omega}$, $(e_n)_{n \in \omega}$ and $(\sigma_n: G_n \acts I_n)_{n \in \omega}$ at our disposal define
    \[e: \F(2^\omega \times 2^\omega \times 2^\omega) \to S_\infty\]
    by defining it on a generator $(x, d^0, d^1) \in 2^\omega \times 2^\omega \times 2^\omega$ as
    \[e(x, d^0, d^1)\res I_n := \sigma_n(e_n(r^\omega_n(x, d^0, d^1))).\]
    
    The following proposition is an elaboration of Propositions 1.3 and 1.4 from \cite{constructing_mcg}.
    
    \begin{proposition}
        The map $e$ is a continuous injective homomorphism, whose range is a cofinitary group. Moreover, $e$ is $\Delta^0_1$ (in terms of that for $w \in \F(2^\omega \times 2^\omega \times 2^\omega)$ and $n \in \omega$, we can calculate $e(w)(n)$ in finite time by analysing $r_m^\omega(w)$ for some $m \in \omega$, which can in turn be calculated in finite time from $n$; all this using $w$ as an oracle), its range is $\Pi^0_1$ and the sequences $( G_n)_{n \in \omega}$ and $(\sigma_n)_{n \in \omega}$ are $\Delta^0_1$.
    \end{proposition}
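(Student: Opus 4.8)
The plan is to get the homomorphism and cofinitariness claims by unwinding the definition of $e$ interval by interval, to get the definability bounds by noting that the whole inductive construction can be carried out effectively, and to get the $\Pi^0_1$ bound on the set of generators from a short compactness argument; the compactness argument is the only step requiring an idea, while the effectivity bookkeeping is the only step requiring care.

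First I would record that, by definition, $w \mapsto e(w)\res I_n$ is the composite $\sigma_n \circ e_n \circ r^\omega_n$ of three group homomorphisms, hence a homomorphism from $\F(2^\omega \times 2^\omega \times 2^\omega)$ into the finite group $S_{I_n}$; since the $I_n$ partition $\omega$, gluing these shows that $e\colon \F(2^\omega \times 2^\omega \times 2^\omega) \to \prod_n S_{I_n} \le S_\infty$ is a homomorphism. For the effectivity, I would fix a canonical resolution of every choice left open in the construction --- the least admissible $k$ at stage $n+1$, the least permutation of $l$ extending $e^0_{n+1}$ in a fixed recursive enumeration of $S_l$, a canonical bijection $\Phi_{n+1}$, and a canonical enumeration of $W_{n+1}$ beginning with $\emptyset$ --- and check by a routine induction that $(I_n)_n$, $(G_n)_n$, $(e_n)_n$ and $(\sigma_n)_n$ are uniformly computable. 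Granting this, $e(w)(m)$ is computed by locating the unique $n$ with $m\in I_n$, computing $r^\omega_n(w)$ --- which depends only on the length-$n$ initial segments of the finitely many generators appearing in $w$, hence on finitely much of $w$ --- and then applying $e_n$ and $\sigma_n$; this establishes the $\Delta^0_1$ assertion, the same finitary description yields continuity, and the $\Delta^0_1$-ness of $(G_n)_n$ and $(\sigma_n)_n$ is part of the induction.

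For injectivity and cofinitariness I would fix $w\ne 1$ of length $\ell\ge 1$ and pick $N\ge\ell$ large enough that $r^\omega_N$ separates the finitely many generators occurring in $w$; then for every $n\ge N$ the word $r^\omega_n(w)$ is reduced of length $\ell$, so it lies in $W_n\setminus\{\emptyset\}$ and condition~(2) gives $e_n(r^\omega_n(w))\ne 1_{G_n}$. The key observation is that for $n\ge 1$ the action $\sigma_n$ is, by its very definition, the left regular representation of $G_n$ carried along $\Phi_n$ --- so $\sigma_n(g)(j)=j$ forces $g\cdot\Phi_n^{-1}(j)=\Phi_n^{-1}(j)$, i.e.\ $g=1_{G_n}$ --- hence free; therefore $e(w)\res I_n$ is fixed-point free for every $n\ge N$. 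It follows that $e(w)\ne\id_\omega$, so $e$ is injective, and that $\fix(e(w))\subseteq\bigcup_{n<N}I_n$ is finite, so $e(w)$ is cofinitary; hence $\range(e)$ is a cofinitary group, and --- $e$ being an injective homomorphism of the free group --- it is freely generated by the images $e(x,d^0,d^1)$ of the free generators.

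Finally, to bound the complexity of the set of free generators $\{e(x,d^0,d^1)\mid(x,d^0,d^1)\in 2^\omega\times 2^\omega\times 2^\omega\}$, I would use that $e(x,d^0,d^1)\res I_n$ depends only on the triple $(x\res n,\,d^0\res n,\,d^1\res n)$. Given $g\in\baire$, let $T(g)$ be the tree whose level-$n$ nodes are those triples $s\in 2^n\times 2^n\times 2^n$ --- viewed as single generators of $\F(2^n\times 2^n\times 2^n)$ --- with $\sigma_m(e_m(s\res m))=g\res I_m$ for all $m\le n$, ordered by coordinatewise restriction. Then $T(g)$ is finitely branching (at most $8$ children per node), and $g$ is the image of some free generator if and only if $[T(g)]\ne\emptyset$, which by K\"onig's lemma holds if and only if $T(g)$ has a node at every level. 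Since ``$T(g)$ has a node at level $n$'' unravels to $\exists s\in 2^n\times 2^n\times 2^n\ \forall m\le n\ \big(\sigma_m(e_m(s\res m))=g\res I_m\big)$ --- a clopen condition on $g$ that is decidable uniformly in $n$ by the effectivity established above --- membership in the range is a countable intersection of clopen conditions and therefore $\Pi^0_1$. I expect this compactness step to be the conceptual heart of the proof; it is nonetheless unproblematic, precisely because $T(g)$ is automatically finitely branching. Note that, in contrast with the maximal cofinitary group constructed later, the coding coordinates $d^0,d^1$ and the padding factors $S_k$ play no role in this proposition.
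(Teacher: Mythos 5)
Your proposal is correct and takes the same route as the paper, whose own proof is just ``clear by construction'': you simply supply the routine details (canonical effective choices in the induction, the fact that each $\sigma_n$ for $n\geq 1$ is the regular representation and hence nontrivial words act without fixed points on all but finitely many $I_n$, and the finitely branching tree/K\"onig argument for the generator set). Your reading of ``its range is $\Pi^0_1$'' as referring to the image of the generator space $2^\omega\times 2^\omega\times 2^\omega$ is the right one: the image of the whole free group could not even be closed, by Dudley's theorem (Theorem \ref{no_polish_free}), and it is the generator set whose complexity the paper needs, exactly as in Proposition \ref{sigma_range} for $\dot{e}$.
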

    
    \begin{proof}
        Clear by construction.
    \end{proof}

    \subsection{The definition of \texorpdfstring{$B_0$}{B0}}

    In this subsection we define the assignment $B_0$, which will form the core of our construction.

    We first introduce the map $\chi: \injfininf \to 2^{\leq\omega}$ defined on $h \in \injfininf$ by
    \[
    \chi(h) := (\underbrace{0, \ldots, 0}_{h(0)}, 1, \underbrace{0, \ldots, 0}_{h(1)}, 1, \underbrace{0, \ldots, 0}_{h(2)}, 1, \ldots) \in 2^{\leq\omega}.
    \]
    Of course, $\lh(\chi(h))$ is infinite if and only if $\lh(h)$ is infinite.

    We define the map $\chi^{\dagger} : 2^{\leq\omega} \to \injfininf$ on $x \in 2^{\leq\omega}$ by
    \[
    \chi^{\dagger}(x) := \begin{cases}
        h & \text{if } x \in \range(\chi) \text{ and } \chi(h) = x;\\
        s & \parbox[t]{8cm}{if $x \notin \range(\chi)$ and $n$ is maximal such that $x \res n \in \range(\chi)$ and $\chi(s) = x \res n$.}
    \end{cases}
    \]
    Of course, $\chi^{\dagger}$ is a left inverse of $\chi$. There are two ways for $x \in 2^\omega$ not to be in $\range(\chi)$. The first one is that $x$ is of the form
    \[
    (\ldots, 1, \underbrace{0, 0, \ldots, 0}_n, 1, \ldots, 1, \underbrace{0, 0, \ldots, 0}_n, 1, \ldots),
    \]
    in which case it clearly cannot be the image of any $g \in \injfininf$, since it violates injectivity. The second option is that $x$ only has finitely many 1s (but does not fall under the first option). Then if $n>0$, $n-1$ is the last index of a $1$ in $x$; and $n=0$ if and only if $x$ is constant with value $0$, in which case $\chi^{\dagger}(x)$ is the empty function.

    Next, define for every $g \in (\omega)^{\leq \omega}$ a function $\vartheta_g \in \injfininf$ 
    with the following properties:
    \begin{enumerate}[(i)]
        \item if $g \in \injinf$, then as $k$ increases, the maps $\vartheta_{g \upharpoonright k} \in \injfin$ approach $\vartheta_g \in \injinf$;
        \item if $g \neq g'$, then $\range(\vartheta_g)$ and $\range(\vartheta_{g'})$ are almost disjoint (this will be used in Claim \ref{ad});
        \item for any $n \in \range(\vartheta_g)$, if $n \in I_m$ and $g(n) \in I_{m'}$ then $m \leq m'$ (this will be important in Subclaim \ref{subclaim_lowest_interval} and in Proposition \ref{sigma_range}).
        \item each set $\range(\vartheta_g)$ is $g$-\emph{spaced}, i.e., for any distinct $n, n' \in \range(\vartheta_g)$, it holds that $n'$ is not in the same interval as any of $n$, $g^{-1}(n)$ or $g(n)$. This will be used in Fact~\ref{f.spaced.used1}.
    \end{enumerate}
    
    These properties are not difficult to arrange, but since our aim is to do this in such a way that in the end the definition of the set of generators is simple, we shall go into the details.

    Let
    \[
    \#: \injfin \to \omega
    \]
    be some fixed $\Delta^0_1$ bijection, which we use to introduce an auxiliary function $F$, defined for $g \in \injfin$ and $k \in \omega$ by
    \[
    F(g, k) := 2^{\#(g)} \cdot 3^{k}.
    \]
    The only property we require of $F$ is that it is $\Delta^0_1$ and injective.
    We also define an auxiliary family of functions which satisfy all of the properties (i) to (iv) above, except possibly (iii): 
    given $g \in (\omega)^{\leq \omega}$, define $\xi_g \in \injfininf$ inductively as follows. Suppose that we have defined $\xi_g$ on all $k < n$ and that for every $k < n$ we have 
     \[
    I_{F(g \res (k + 1), \xi_g(k))} \subseteq \dom(g) \cup \range(g).
    \]
    (If the above requirement is not fulfilled for some $k < n$, then $\dom(\xi_g) = n$, and our inductive definition terminates.) Then we set $\xi_g(n)$ to be the smallest such that 
    \[
    \min I_{F(g \res (n+1), \xi_g(n))}
    \]
    is strictly larger than any member of
    \[
    \bigcup \big\{\{l, g(l)
    , g^{-1}(l)
    \} \, \big| \, (\exists k < n)\,  l \in I_{F(g \res (k+1), \xi_g(k))}\big\}.
    \]
    With $\xi_g$ at our disposal,
    we finally define the function $\vartheta_g \in \injfininf$, defined on those $n \in \omega$, for which 
    \[
    \dom(g) \geq \max\big\{n+1, \min I_{F(g \res (n+1), \xi_g(n)) + 1}\big\}
    \]
    by
\[
 \vartheta_g(n) := \min \big(I_m \setminus \cup\{ g^{-1}[I_k] \, | \, k < m\}\big),
\]
where $m = F(g \res (n + 1), \xi_g(n))$. It is clear that properties (i) to (iv) are fulfilled.

Now we define for every $g \in (\omega)^{\leq \omega}$ and $c^0, c^1 \in 2^{\leq \omega}$ with $\lh(g) \leq \lh(c^0) = \lh(c^1)$ the sets
    \[
    D(g) := \dom(g) \cap \range(\vartheta_g)
    \]
    and
    \begin{align*}
    B_0(g, c^0, c^1) :=
    &\dom(g) \cap \Big\{\vartheta_g\big(\widehat{c^0}(n)\big) \, \Big| \, n \in \omega \land c^1(n) = 1 \land \widehat{c^0}(n) \in \dom(\vartheta_g) \Big\} \Big\backslash\\
    &\Big\{m \in \omega \, \Big| \, g(m) = e(g, c^0, c^1)(m) \Big\}.
    \end{align*}
    Recall that $\widehat{c^0}$ is the function enumerating $\range(c^0)$. Basically, what we are doing is that we are using $c^1$ to pick certain elements from $\range(c^0)$, and then using these picked elements as inputs of $\vartheta_g$. This way we have passed to a subset $B_0(g, c^0, c^1)$ of $D(g)$ in such a way that case (2) of Claim \ref{ad} (establishing a strong form of almost disjointness) holds.

    Above we have defined the notion of when a set is $g$-spaced. The notion is based on the notion of $(f, g)$-spacedness, taken from Subsection 1.2 of \cite{constructing_mcg}. Note that since $e(\cdot, \cdot, \cdot)$ respects the interval partition, our notion of $g$-spaced agrees with the notion of $(e(\chi(g), c^0, c^1), g)$-spaced from \cite{constructing_mcg} for any $c^0$ and $c^1$. Instead of saying that for each $g \in \injfininf$ and any $c^0, c^1 \in 2^{\leq \omega}$ (with $\lh(g) \leq \lh(c^0) = \lh(c^1)$) the set $B_0(g, c^0, c^1)$ is $g$-spaced, we rather simply say that $B_0$ is \emph{spaced}.

    Finally, observe that $D(g)$ and $B_0(g, c^0, c^1)$ are $\Delta^0_1(g)$ and $\Delta^0_1(g, c^0, c^1)$ respectively (in terms of that their membership is decidable using their respective oracles).

    \subsection{Relations coding extensions}

    We continue with a discussion of binary relations which will, by abstracting certain technical aspects, simplify some of the technical steps in the proof of the main theorem. The definitions of the relations
    $<^f_0$ and  $<^f_1$ are inspired by \cite{constructing_mcg}.

    Following \cite{compactness_med}, for $c \in 2^{\leq \omega}$ we say that it is \emph{good}, if for every two successive $n_0 < n_1$ from $c^{-1}[\{1\}]$ we have that
    \[
    n_1 \equiv \sum_{i \leq n_0} c(i) \cdot 2^i \pmod{2^{n_0 + 1}},
    \]
    and in case
    \[
        |\{n \in \omega \, | \, c(n) \text{ is defined and equals } 1\}| < \omega
    \]
    the sequence $c$ is finite. 
    
    Let $\mathcal C$ denote the set of all finite good sequences $c$, with either $c(-1) = 1$ or $c = \emptyset$. For $c, c' \in \mathcal{C}$, define the relation $c \trik c'$ by
    \[
    c \trik c' \quad :\Longleftrightarrow \quad \text{for some $n \in \omega$, } c' = c^\smallfrown 0^{n}1_2.
    \]
    Good sequences will be crucial for our construction (see the beginning of Subsection \ref{subsection_construction}) and for the nice properties used e.g. in Claim \ref{ad}.

    Define now a uniformly computable family
    \[
    \{<^f_0 \, | \, f \in (\omega)^{\leq \omega} \}
    \]
    of strict partial orders as follows: given $m, m' \in I_n$ define
    \[
        \delta_n(m, m') =
        \begin{cases}
    \parbox[t]{9cm}{the unique element $(w_0, w_1, w_2) \in W_n$ such that $e_n(w_0, w_1, w_2)(m) = m'$, if such exists; or}\\
    \uparrow \text{(remains undefined) otherwise}.
    \end{cases}
    \]
    For $f \in (\omega)^{\leq \omega}$ and $m \in \omega$, we let
    \[
    \delta(f, m) := \delta_n(m, f(m)) 
    \]
    for the unique $n$ such that $m \in I_n$, when $m \in \dom(f)$ and $\delta_n(m, f(m))$ is defined; otherwise we let $\delta(f, m)$ remain undefined. Finally, set
    \[
    m <^f_0 m'
    \]
    precisely when $m < m'$, $m' \in \dom(f)$, $\delta(f, m)$ and $\delta(f, m')$ are defined and
    \[
    \delta(f, m) = r^{m'}_m (\delta(f, m')).
    \]
    Of course, by ``uniformly computable'' we mean that there is some $a \in \omega$ such that the family is precisely the set 
    \[
    \big\{\{a\}^f \setdef f \in (\omega)^{\leq \omega}\big\},
    \]
    which is clearly the case.

    Next, we introduce another uniformly computable family of strict partial orders
    \[
        \{<^f_1 \, | \, f \in \injfininf \}
    \]
    as follows. For $m, m' \in \omega$ set $m <^f_1 m'$ if and only if it holds that $m < m'$, $f(m) < f(m')$ are both defined and there is some $g \in \injfin$ so that $\{f(m), f(m') \} \sq D(g)$. It is not hard to see that $<^f_1$ is a strict partial order (to prove transitivity use the nice properties of $\vartheta_g$). Note also that the family is uniformly computable,
    as the existence of the appropriate $g \in \injfin$ can be determined in finitely many steps.

    We now present an amplification of Claim 2.7 from \cite{compactness_med}, which will be essential for the proof of maximality of our constructed group (see Proposition \ref{prop_maximality}).

    \begin{lemma}\label{dichotomy}
        For any
        $g \in \injinf$
        one of the following holds:
        \begin{enumerate}
            \item There is some $I \in [D(g)]^{\infty}$, which is linearly ordered by $<^g_0$.
            \item There are good sequences $d^0, d^1 \in 2^\omega$, such that no two elements of $B_0(g, d^0, d^1)$ are $<^g_0$-comparable and either:
            \begin{enumerate}
                \item $B_0(g, d^0, d^1)$ is linearly ordered by $<^g_1$; or
                \item no two elements of $B_0(g, d^0, d^1)$ are $<^g_1$-comparable.
            \end{enumerate}
        \end{enumerate}
    \end{lemma}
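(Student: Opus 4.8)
The plan is to build the sequences $d^0, d^1$ by a priority argument of low complexity, recursively, so that the result is automatically good. First I would note that since $g \in \injinf$, the auxiliary function $\xi_g$ and hence $\vartheta_g$ are total, so $D(g) = \dom(g) \cap \range(\vartheta_g)$ is infinite. The key observation is that $<^g_0$ is a partial order on the infinite set $D(g)$, so by a Ramsey-type dichotomy for partial orders (every infinite partial order contains either an infinite chain or an infinite antichain) we split into the two cases of the lemma. If there is an infinite chain, we are in case (1) and done immediately (take $I$ to be such a chain; it lies in $D(g)$ by construction). So assume every $<^g_0$-chain inside $D(g)$ is finite, equivalently there is an infinite $<^g_0$-antichain; I would want to extract $d^0, d^1$ so that $B_0(g, d^0, d^1)$ sits inside such an antichain while being itself a good-sequence-coded set.

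The core construction is to define $\widehat{d^0}$ (the increasing enumeration of the positions where $d^0$ is $1$) along with $d^1$ recursively. At stage $n$ I have committed to a finite initial segment; I look for the next value $\widehat{d^0}(n)$, a natural number large enough that (a) $\vartheta_g(\widehat{d^0}(n))$ is defined, (b) $\vartheta_g(\widehat{d^0}(n))$ is $<^g_0$-incomparable with all the finitely many elements $\vartheta_g(\widehat{d^0}(k))$, $k<n$, already chosen with $d^1(k) = 1$ — this is possible precisely because we are assuming no infinite $<^g_0$-chain, so among any infinite set of candidates for the next element we can find one incomparable with the finite set built so far (using König's lemma / the finite-chain hypothesis applied to the set of all candidates below a putative common $<^g_0$-bound), and (c) the resulting finite sequence can be extended to a good sequence, i.e.\ the congruence condition in the definition of ``good'' is met — this is the point where the ideas borrowed from \cite{compactness_med} are used: the congruence $n_1 \equiv \sum_{i \le n_0} c(i) 2^i \pmod{2^{n_0+1}}$ pins down $n_1 \bmod 2^{n_0+1}$ but leaves infinitely many choices for $n_1$, so goodness never blocks us from placing the next $1$ far enough out. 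Having secured an infinite $<^g_0$-antichain-valued $B_0(g, d^0, d^1)$, I then further thin it: inside this set, $<^g_1$ is again a partial order, so I apply the same dichotomy once more — either I can pass (by the same kind of recursive thinning, again preserving goodness) to a sub-choice of $d^0, d^1$ on which $B_0$ is $<^g_1$-linearly ordered, giving (2a), or every infinite subset already contains an infinite $<^g_1$-antichain and I thin to make $B_0$ a $<^g_1$-antichain, giving (2b). Throughout, I must keep checking that removing the ``bad'' elements $\{m : g(m) = e(g, d^0, d^1)(m)\}$ in the definition of $B_0$ does not destroy infinitude; but this set is finite since $e(g, d^0, d^1)$ is cofinitary (it is a nontrivial element of the cofinitary group $\range(e)$, as $g \in \injinf$ makes the word $(\chi(g)^\dagger\text{-data})$ nontrivial — more carefully, one checks $e(g, c^0, c^1) \ne g$ on a cofinite set directly), so it removes only finitely many points.

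The hard part, and where I would spend the most care, is the interaction of the two requirements ``incomparable with finitely many previous elements under $<^g_0$'' and ``extends to a good sequence.'' Each alone is a simple ``go far enough out'' demand, but goodness forces $\widehat{d^0}(n)$ into a fixed residue class mod $2^{\widehat{d^0}(n-1)+1}$, and I must confirm that this residue class still contains infinitely many indices $k$ with $\vartheta_g(k)$ incomparable to the finite committed set — this follows because the committed set's ``$<^g_0$-cone'' is a finite union of finite $<^g_0$-chains (finite by hypothesis) and hence meets only finitely many $\vartheta_g$-preimages, but I would write this out explicitly. A secondary subtlety is ensuring $d^0, d^1 \in 2^\omega$ genuinely have infinitely many $1$'s so the goodness clause (forcing finiteness when only finitely many $1$'s occur) is vacuous — this is immediate since the recursion never halts. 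Finally one double-checks that the nice spacing and almost-disjointness properties (i)--(iv) of $\vartheta_g$, already established, are exactly what make $<^g_1$ transitive and make the thinning arguments in the second dichotomy go through, so no new facts about $\vartheta_g$ are needed beyond the excerpt.
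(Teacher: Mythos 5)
Your reduction of the lemma to the abstract Ramsey dichotomy ``infinite chain or infinite antichain'' misses the point where the real work lies, and the step you use to bridge it is false. In the no-chain case you run a greedy recursion, and you justify that it never gets stuck by asserting that the $<^g_0$-upward cone of the finitely many already-chosen elements is ``a finite union of finite $<^g_0$-chains''. That does not follow from the absence of infinite chains: the cone above a single chosen point can be an infinite $<^g_0$-antichain (infinitely many pairwise-incompatible words all restricting to $\delta(g,m)$), in which case all but finitely many admissible candidates in the prescribed residue class could be $<^g_0$-above an element you already committed to, and your recursion halts. The same defect recurs, more severely, in your second dichotomy: an infinite $<^g_1$-chain or antichain extracted abstractly from the picked set need not be selectable by a \emph{good} $d^1$, because goodness forces each next $1$-position into a single residue class modulo $2^{n_0+1}$, and an arbitrary infinite index set can miss every required class (e.g.\ a set contained in $3+8\Z$ admits no good continuation at all once $n_0\geq 3$). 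So ``thin by Ramsey, then preserve goodness'' is not a construction one can carry out; the goodness constraint has to be built into the dichotomy itself.

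This is exactly what the paper's proof does differently: instead of ``chain vs.\ no chain'' it uses a quantifier alternation over the set $\mathcal{C}$ of finite good sequences with the one-step extension relation $\trik$ (statements $(*)$/$(\neg*)$ for $<^g_0$, then $(**)$/$(\neg**)$ for $<^g_1$). If at some point every admissible next position has an admissible later position $<^g_0$-above it, iterating this yields an infinite chain in $D(g)$ and case (1); otherwise one can always pick the next $1$-position so that \emph{no} good one-step extension gives a value above it, and the congruence in the definition of goodness is rigged precisely so that any later $1$-position of any good extension is also a legal one-step extension position, so this single universal guarantee propagates to all later picks and yields pairwise $<^g_0$-incomparability of the whole selected set. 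Your proposal contains no substitute for this mechanism. A smaller inaccuracy: your claim that the subtracted set $\{m : g(m)=e(\chi(g),d^0,d^1)(m)\}$ is finite ``by cofinitariness'' is wrong --- $g$ is not a group element, and infinite agreement of $g$ with words in $\range(e)$ is not only possible but is the engine of the maximality proof; luckily the lemma as stated does not require $B_0(g,d^0,d^1)$ to be infinite (infinitude is handled separately, in Claim \ref{inf_B} and Proposition \ref{prop_maximality}), so this slip does not affect the statement, unlike the two gaps above.
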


    \begin{proof}
        Fix any
        $g \in \injinf$. Assume first that
        \begin{equation*}
            (\exists c_0  \in \mathcal{C})\, (\forall c_1  \in \mathcal{C}) \, c_0  \trik c_1  \to (\exists c_2  \in \mathcal{C})\, c_1  \trik c_2  \land \vartheta_g(\lh(c_1 ) - 1) <^g_0 \vartheta_g(\lh(c_2 )-1) \tag{$*$}
        \end{equation*}
        holds. Then fix $c_0 $ witnessing the first existential quantifier, let $c_1  \in \mathcal{C}$ be such that $c_0  \trik c_1 $ and set $n_0 := \vartheta_g(\lh(c_1 )-1)$. Using the second existential quantifier, we get some $c_2 $ with $c_1  \trik c_2 $ and $n_0 <^g_0 \vartheta_g(\lh(c_2 )-1)$. Put $n_1 := \vartheta_g(\lh(c_2 )-1)$ and let $d_2 \in \mathcal{C}$ be unique such that $\lh(d_2 ) = \lh(c_2 )$ and $c_0  \trik d_2 $. Applying $(*)$ with $d_2 $ instantiating the universal quantifier, we get $c_3 \in \mathcal{C}$ with $d_2  \trik c_3 $ and $n_1 <^g_0 \vartheta_g(\lh(c_3 )-1)$, so we set $n_2 := \vartheta_g(\lh(c_3 )-1)$. Clearly, we can proceed inductively to obtain an infinite sequence $(n_i)_{i \in \omega}$ contained in $D(g)$ and such that for every $i \in \omega$ it holds that $n_i <^g_0 n_{i+1}$. Then set
        \[
        I := \{n_i \, | \, i \in \omega\},
        \]
        to obtain (1).

        Suppose now that $(*)$ does not hold. Then its negation
        \[
        (\forall c_0  \in \mathcal{C})\, (\exists c_1  \in \mathcal{C})\, c_0  \trik c_1  \land (\forall c_2  \in \mathcal{C})\, c_1  \trik c_2  \to \neg \big(\vartheta_g(\lh(c_1 ) - 1) <^g_0 \vartheta_g(\lh(c_2 )-1)\big) \tag{$\neg *$}
        \]
        must be true. Now, first instantiate the leftmost universal quantifier of $(\neg *)$ with $c_0  := \emptyset$, to get $d_0  \in \mathcal{C}$ with $\emptyset \trik d_0 $. Next, instantiate the same quantifier with $c_0  := d_0 $, to get $d_1 \in \mathcal{C}$ with $d_0  \trik d_1 $. Inductively, we get a sequence $(d_i )_{i \in \omega}$, such that for every $i \in \omega$ it holds that $d _i \trik d _{i+1}$. With this, we define an infinite good sequence $d^0 \in 2^\omega$ by
        \[
        d^0 := \cup\{ d_i  \, | \, i \in \omega\}.
        \]
        
        Now we basically repeat the argument of case (1) to get the second infinite good sequence $d^1$. Assume first that
        \begin{equation*}
            (\exists c_0  \in \mathcal{C})\, (\forall c_1  \in \mathcal{C}) \, c_0  \trik c_1  \to (\exists c_2  \in \mathcal{C})\, c_1  \trik c_2  \land \vartheta_g(\widehat{d^0}(\lh(c_1 ) - 1)) <^g_1 \vartheta_g(\widehat{d^0}(\lh(c_2 )-1)) \tag{$**$}
        \end{equation*}
        holds. Then as in the case when $(*)$ is true, we can get a good sequence $d^1 \in 2^\omega$ such that the set
        \[
        I := \{ \vartheta_g(\widehat{d^0}(n)) \, | \, n \in \omega \land d^1(n) = 1 \}
        \]
        is linearly ordered by $<^g_1$. By definition it holds that $B_0(g, d^0, d^1) \sq I$, so $B_0(g, d^0, d^1)$ is linearly ordered by $<^g_1$ as well. Due to the property in the right part of $(\neg *)$, we have made sure that no two elements of $B_0(g, d^0, d^1)$ are $<^g_0$-comparable.
        We have thus obtained case (a) of (2).
        
        If on the other hand the negation of $(**)$,
        \begin{align}
        (\forall c_0  \in \mathcal{C})\, (\exists c_1  \in \mathcal{C})\, c_0  \trik c_1  \land &(\forall c_2  \in \mathcal{C})\, c_1  \trik c_2  \to \tag{$\neg {**}$}\\
        &\neg \big(\vartheta_g(\widehat{d^0}(\lh(c_1 ) - 1)) <^g_1 \vartheta_g(\widehat{d^0}(\lh(c_2 )-1))\big) \notag
        \end{align}
        holds, we repeat the procedure by which we have constructed $d^0$ to get an infinite good $d^1$. By definition of $B_0(g, d^0, d^1)$ and the right parts of $(\neg *)$ and $(\neg {**})$, we have made sure that no two elements of $B_0(g, d^0, d^1)$ are comparable with respect to either one of $<^g_0$ or $<^g_1$, obtaining case (b) of (2).
    \end{proof}

    \subsection{Further restriction of \texorpdfstring{$B_0$}{B0}} The definitions of $D$ and $B_0$ make sure that the sets $D(\cdot)$ and $B_0(\cdot, \cdot, \cdot)$ are mutually sparse,
    as the following claim shows (this is a stronger analogue to Claim 2.4 from \cite{compactness_med} and Lemma 1.9 from \cite{constructing_mcg}).

    \begin{claim}\label{ad}
        Suppose that $g, h \in \injinf$ and $c^0, c^1, d^0, d^1 \in 2^\omega$ are all good.
        \begin{enumerate}
            \item If $g \neq h$, then the set
        \[
        \{ n \in \omega \, | \, D(g) \cap I_n \neq \emptyset \land D(h) \cap I_n \neq \emptyset\}
        \]
        is finite.
        \item If $(g, c^0, c^1) \neq (h, d^0, d^1)$, then the set
        \[
        \{ n \in \omega \, | \, B_0(g, c^0, c^1) \cap I_n \neq \emptyset \land B_0(h, d^0, d^1) \cap I_n \neq \emptyset\}
        \]
        is finite.
        \end{enumerate}
    \end{claim}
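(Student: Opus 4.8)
The plan rests on two observations. The first is pure bookkeeping about the construction: every value $\vartheta_g(n)$ lies in the interval $I_{F(g\res(n+1),\,\xi_g(n))}$, and $F(s,k)=2^{\#(s)}\cdot 3^{k}$ is injective (it composes the bijection $\#$ with $(a,k)\mapsto 2^{a}3^{k}$), so the \emph{index} of that interval recovers both $g\res(n+1)$ and $\xi_g(n)$. Consequently distinct inputs $n$ send $\vartheta_g$ into distinct intervals, and whenever an interval $I_N$ receives a value $\vartheta_g(n)$ and a value $\vartheta_h(m)$ we obtain $g\res(n+1)=h\res(m+1)$, hence $n=m$ and $g\res(n+1)=h\res(n+1)$. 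Part (1) now follows at once: since $D(g)\sq\range(\vartheta_g)$ and $D(h)\sq\range(\vartheta_h)$, any $I_N$ meeting both forces $g\res(n+1)=h\res(n+1)$ for the relevant $n$, which---as $g\neq h$---can hold only for $n$ below the least place where $g$ and $h$ differ, leaving finitely many such $N$. (For $g\in\injinf$ the function $\vartheta_g$ is total, so the domain restrictions in the definitions are vacuous.)

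For part (2) I would run the same bookkeeping on the inclusion of $B_0(g,c^0,c^1)$ in $\{\vartheta_g(\widehat{c^0}(n))\mid n\in\omega,\ c^1(n)=1\}$ and its analogue for $(h,d^0,d^1)$: an interval $I_N$ meeting both sets forces, for some $n,m$ with $c^1(n)=d^1(m)=1$, that $\widehat{c^0}(n)=\widehat{d^0}(m)=:p$ and $g\res(p+1)=h\res(p+1)$ (so in particular $c^0(p)=d^0(p)=1$), while distinct $p$ still give distinct $N$. If $g\neq h$ we conclude exactly as in part (1). If $g=h$, then $(c^0,c^1)\neq(d^0,d^1)$, and it suffices to bound the set of admissible $p$. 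If $c^0\neq d^0$ this is immediate once one knows that two distinct good sequences agree on only finitely many $1$'s. If $c^0=d^0$, then $c^1\neq d^1$ and $\widehat{c^0}=\widehat{d^0}$ forces $n=m$ above, so the admissible $p$ are exactly the $\widehat{c^0}$-images of the indices $n$ with $c^1(n)=d^1(n)=1$, and the same fact applied to $c^1$ and $d^1$ finishes. (The fixed-point set subtracted in the definition of $B_0$ only shrinks everything in sight, so it is irrelevant here.)

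The single substantive step is therefore the lemma: \emph{distinct good sequences $c,d\in2^\omega$ have only finitely many $1$'s in common positions.} I would derive it from the elementary observation that if $c$ is good and $c(w)=1$, then every $1$-position of $c$ greater than $w$ is congruent to $\sum_{i\le w}c(i)2^{i}$ modulo $2^{w+1}$ --- an immediate induction along the $1$-positions of $c$ using the defining congruence for successive $1$'s together with the fact that each $1$ beyond $w$ contributes a multiple of $2^{w+1}$. Now let $q$ be the least place where $c$ and $d$ differ. For $w\ge q$ the numbers $\sum_{i\le w}c(i)2^{i}$ and $\sum_{i\le w}d(i)2^{i}$ are distinct, and when $c(w)=d(w)=1$ both lie in $[2^{w},2^{w+1})$; hence if $c$ and $d$ share a $1$ at some $w\ge q$ their residues modulo $2^{w+1}$ differ, so no position beyond $w$ is a $1$-position of both. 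Together with the trivial bound on common $1$'s occurring below $q$, this yields finiteness. The main obstacle is thus isolating and proving this good-sequence lemma; once it is in hand, both parts of the claim are routine consequences of the interval arithmetic above.
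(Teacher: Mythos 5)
Your proof is correct and takes essentially the same route as the paper: the interval index $F(g\res(n+1),\xi_g(n))$ recovers the generator data (so shared intervals force agreement of initial segments), and the congruence property of good sequences gives at most one common $1$-position past the first disagreement. The paper's own proof is just a compressed version of this, leaving the interval bookkeeping and the goodness lemma to the reader, which you have filled in correctly.
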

    
    \begin{proof}
        For (1), let $m \in \omega$ be such that $g \res m \neq h \res m$. Then for all $k_0, k_1 > m$ it holds that $\vartheta_g(k_0)$ and $\vartheta_h(k_1)$ are not in the same interval $I_n$, and thus the proof of (1) is complete.

        Suppose for (2) that $g = h$ and that $c^j \neq d^j$ for some $j \in 2$. Let $m \in \omega$ be such that $c^j \res m \neq d^j \res m$. By definition of goodness, there is at most one $k \geq m$, for which it holds that both $c^j(k) = 1$ and $d^j(k) = 1$. Then a quick look at the definition of $B_0$  (left to the reader) completes the proof of (2).
    \end{proof}

    Unfortunately, the sets $B_0(\cdot, \cdot, \cdot)$ are still not sparse enough for the construction to succeed, so we now restrict them further. 

     We will define a function
    \[
    B: \{(g, c^0, c^1) \in \injfininf \times 2^{\leq \omega} \times 2^{\leq \omega} \, | \, \lh(g) \leq \lh(c^0) = \lh(c^1) \} \to [\omega]^{\leq \omega},
    \]
    such that for every $(g, c^0, c^1) \in \dom(B)$ it holds that $B(g, c^0, c^1) \sq B_0(g, c^0, c^1)$.
    Moreover, we will require that the following property, called \emph{superspacedness} (since it is reminiscent of the notion of being \emph{spaced}, borrowed from \cite{constructing_mcg} and reintroduced above), holds for $B$ (note that since $B$ is pointwise contained in $B_0$, it is automatically spaced).

    \begin{definition}[Superspacedness]
        A function
        \[
        B: \{(g, c^0, c^1) \in \injfininf \times 2^{\leq \omega} \times 2^{\leq \omega} \, | \, \lh(g) \leq \lh(c^0) = \lh(c^1) \} \to [\omega]^{\leq \omega}
        \]
        is \emph{superspaced} if for every $g \in \injinf$, for which the set
        \[
        I := \{n \in D(g) \, | \, g(n) = e(\vec{x}, \vec{d^0}, \vec{d^1})(n)\}
        \]
        is infinite for some $(\vec{x}, \vec{d^0}, \vec{d^1}) \in \F(2^\omega \times 2^\omega \times 2^\omega)$, with $\vec{x} = x_{k}^{i_k} \cdots x_{0}^{i_0}$ and $\vec{d^j} = (d^j_k)^{i_k} \cdots (d^j_0)^{i_0}$ for $j \in 2$, defining 
        \[
        J := \{j \in k + 1 \, | \, x_j \in \range(\chi) \land \chi^{\dagger}(x_j) \neq g \land d^0_j, d^1_j \text{ are good}\},
        \]
        it holds that there are infinitely many $m \in I$, so that
        \[
        (\forall j \in J)\, \chi^{\dagger}(x_j)[B(\chi^{\dagger}(x_j), d^0_j, d^1_j)] \cap I_{n(m)} = \emptyset,
        \]
        where $n(m)$ is the unique such that $m \in I_{n(m)}$.
    \end{definition}

    The notion of superspacedness is similar to the notion of being \emph{cooperative} from \cite{constructing_mcg}. The purpose of the definition is the following: we will be catching elements $f \in \injinf$ on $D(f)$ by words in $\range(e)$, so we have to ensure that with the definition of $\dot{e}$ (using the definition of $e$ in certain cases; see Subsection \ref{subsection_construction}) we still catch the elements (now with words in $\range(\dot{e})$). This is formalised in Proposition \ref{prop_maximality}.

     We continue by defining $B$. We first define a set
     \[
     T \sq \injfin \times \{-1, 1\}^{<\omega} \times (2^{<\omega})^{<\omega} \times (2^{<\omega})^{<\omega} \times (2^{<\omega})^{<\omega},
     \]
     equipped with a tree like order $<$, and a function 
     \[
     \psi: T \to 2^{< \omega}.\]
     They will store information which we will then use to define $B$ from $B_0$. The process of defining $\psi$ will be algorithmic in nature and is similar to the algorithm called ``semaphore'' in \cite{constructing_mcg}, where it is also explained in detail (which the reader might find helpful). Let
	\begin{align*}
		T := \Big\{ (s, \vec{i}, \vec{x}, \vec{d^0}, \vec{d^1}) \in \injfin \times &\{-1, 1\}^{<\omega} \times (2^{<\omega})^{<\omega} \times (2^{<\omega})^{<\omega} \times (2^{<\omega})^{<\omega} \, \Big| \\
            &\lh(\vec{i}\, ) = \lh(\vec{x}) = \lh(\vec{d^0}) = \lh(\vec{d^1})\, \land\\
            &(\exists k \in \omega)\, \lh(s) = \sum_{m \leq k} |I_m|\, \land \, \\
            &(\forall j \in \lh(\Vec{x}))\, \lh(\Vec{x}(j)) = \lh(\vec{d^0}(j)) = \lh(\vec{d^1}(j)) = k
        \Big\}.
	\end{align*}
	Here, the first component $s$ is an approximation to an element of $\injinf$,
    and the last four components, $\vec{i}, \vec{x}, \vec{d^0}, \vec{d^1}$ determine the word
    \[
    w(\vec{i}, \vec{x}, \vec{d^0}, \vec{d^1}) \in \F(2^{\lh(\Vec{x}(0))} \times 2^{\lh(\Vec{d^0}(0))} \times 2^{\lh(\Vec{d^1}(0))}),
    \]
    defined by
    \[
    w(\vec{i}, \vec{x}, \vec{d^0}, \vec{d^1}) := \big(\vec{x}(-1), \vec{d^0}(-1), \vec{d^1}(-1)\big)^{\vec{i}(-1)} \cdots \big(\vec{x}(0), \vec{d^0}(0), \vec{d^1}(0)\big)^{\vec{i}(0)}.
    \]
    For $(s, \vec{i}, \vec{x}, \vec{d^0}, \vec{d^1}) \in T$ let $k(s)$ be the unique $k$ for which $\lh(s) = \sum_{m \leq k} |I_m|$.

    Clearly, $T$ is recursive. We next define a recursive tree-like ordering $<$ on $T$ as follows:
        \begin{align*}
		(s_0, \vec{i}_0, \vec{x}_0, \vec{d^0_0}, \vec{d^1_0}) < (&s_1, \vec{i}_1, \vec{x}_1, \vec{d^0_1}, \vec{d^1_1})\, :\Longleftrightarrow\\
  &s_0 \sqsubset s_1 \land \vec{i}_0 = \vec{i}_1 \land
            r^{k(s_1)}_{k(s_0)}\big(w(\vec{i}_1, \vec{x}_1, \vec{d^0_1}, \vec{d^1_1})\big) = w(\vec{i}_0, \vec{x}_0, \vec{d^0_0}, \vec{d^1_0}).
	\end{align*}
	Finally, we inductively define $\psi$. The idea is that for every $(f, p^0, p^1) \in \dom(B_0)$, we will look at the value of $\psi$ on the relevant elements of $T$ and based on the value of $\psi$ on those elements we will decide whether to remove elements from  $B_0(f, p^0, p^1)$ when defining $B(f, p^0, p^1)$ (see Equation \ref{def_B} below), so that the desired superspacedness condition holds (we will verify this in Claim \ref{superspaced_claim}).

     Let $t = (s, \vec{i}, \vec{x}, \vec{d^0}, \vec{d^1}) \in T$ and suppose we have defined $\psi$ on all $<$-predecessors of $t$ in $T$. Set $n := \lh(\Vec{i}\, )$ and for $j \in n$ define $g_j := \chi^\dagger(\Vec{x}(j))$. We will define $\psi(t)$, which will be an element of $2^n$. Let $t_* = (s_*, \vec{i}_*, \vec{x}_*, \vec{d^0_*}, \vec{d^1_*}) \in T$ be the strict predecessor of $t$ (note that when there is a strict predecessor it is unique; in case when $t$ does not have a strict predecessor, set
     \[
     t_* := (\emptyset, \Vec{i}, (\underbrace{\emptyset, \ldots, \emptyset}_{\lh(\vec{i}\, )}), (\underbrace{\emptyset, \ldots, \emptyset}_{\lh(\vec{i}\, )}), (\underbrace{\emptyset, \ldots, \emptyset}_{\lh(\vec{i}\, )}))
     \]
     and use 0 for any occurrence of $\psi(t_*)(j)$ below). For $j \in n$ set $g_j^* := \chi^\dagger(\Vec{x}_*(j))$. We now consider two cases.
     \begin{itemize}
         \item If there is some $j \in n$ for which
            \[
            \psi(t_*)(j)= 0
            \]
            and
            \begin{align*}
                (\exists m \leq k)\, (\exists l \in I_m)\, &l \in D(s) \, \land\\
                &\delta(s, l) = r^k_m \big(w(\vec{i}, \vec{x}, \vec{d^0}, \vec{d^1}) \big)\, \land \\
                &g_j[B_0(g_j, \vec{d^0}(j), \vec{d^1}(j))] \cap I_m \neq \emptyset\, \land\\
                &g_j^*[B_0(g_j^*, \vec{d^0_*}(j), \vec{d^1_*}(j))] \cap I_m = \emptyset,
            \end{align*}
            then for all such $j$ put
            \[
            \psi(t)(j) = 1
            \]
            and for all other $j' \in n$ keep 
            \[
            \psi(t)(j') = \psi(t_*)(j').
            \]
            \item  If there is no such $j$, put
            \[
            \psi(t)(j) = 0
            \]
            for all $j \in n$.
     \end{itemize}
     It is not hard to see that $\psi$ is recursive.

    Finally, we use $T$ and $\psi$ to define $B$. Take any $(f, p^0, p^1) \in \dom(B_0)$ and suppose that $m \in B_0(f, p^0, p^1)$.
    Then let %
    \begin{align}\label{def_B}
        m \notin B(f, p^0, p^1) \quad :\Longleftrightarrow \quad
        &(\exists (s, \vec{i}, \vec{x}, \vec{d^0}, \vec{d^1}) \in T)\, (\exists j \in \lh(\vec{i}\, ))\,  \\
        &\chi^{\dagger}(\vec{x}(j)) \sqsubseteq f \land \vec{d^0}(j) = p^0 \res k(s) \land \vec{d^1}(j) = p^1 \res k(s) \, \land\notag\\
        &m \in \dom(\chi^{\dagger}(\vec{x}(j))) \land
        (\exists l \in I_{n(f(m))})\, l \in D(s)\, \land \notag\\
        &\delta(s, l) = r^{k(s)}_{n(f(m))}\big( w(\vec{i}, \vec{x}, \vec{d^0}, \vec{d^1})\big) \, \land \notag\\
        &\psi(s, \vec{i}, \vec{x}, \vec{d^0}, \vec{d^1})(j) = 0\, \land \notag\\
        &(\forall (s_*, \vec{i}_*, \vec{x}_*, \vec{d^0_*}, \vec{d^1_*}) < (s, \vec{i}, \vec{x}, \vec{d^0}, \vec{d^1}))\, \notag\\
        &m \notin \dom(\chi^\dagger(\Vec{x}_*(j))) \lor k(s_*) < n(f(m)), \notag
    \end{align}
    where $n(f(m))$ is the unique $n$ such that $f(m) \in I_n$.
    Note that even though (\ref{def_B}) makes reference to an existential quantifier over $T$,
    the existence of the $(s, \vec{i}, \vec{x}, \vec{d^0}, \vec{d^1}) \in T$ satisfying the required properties can be determined in finitely many steps. This is because $D(\cdot)$ are mutually almost disjoint, and so
    the requirements on the right side of (\ref{def_B}) leave us only with finitely many potential candidates $(s, \vec{i}, \vec{x}, \vec{d^0}, \vec{d^1}) \in T$ which we need to consider. We leave the details to the reader.
    
    Hence, since $B_0$, $T$ and $\psi$ are all $\Delta^0_1$, $B$ is $\Delta^0_1$ as well. Moreover, it holds by design that $B(f, p^0, p^1) \sq B_0(f, p^0, p^1)$ for every $(f, p^0, p^1) \in \dom(B_0)$.

    The following claim identifies sufficient conditions for $B(f, p^0, p^1)$ to be infinite. This will be used later in Proposition \ref{prop_maximality}, where we prove that the constructed group is maximal.
    
    \begin{claim}\label{inf_B}
        Suppose that $f \in \injinf$ and $p^0, p^1 \in 2^ \omega$ are good so that $B_0(f, p^0, p^1)$ is infinite, no two elements of $B_0(f, p^0, p^1)$ are comparable with respect to $<^f_0$ and so that one of the following holds:
        \begin{enumerate}[(a)]
            \item $B_0(f, p^0, p^1)$ is linearly ordered by $<^f_1$; or
            \item no two elements of $B_0(f, p^0, p^1)$ are $<^f_1$-comparable.
        \end{enumerate}
        Then $B(f, p^0, p^1)$ is infinite.
    \end{claim}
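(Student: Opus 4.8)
We argue by contradiction, so suppose $B(f, p^0, p^1)$ is finite. Since $B_0(f, p^0, p^1)$ is infinite and contains $B(f, p^0, p^1)$, the set $R := B_0(f, p^0, p^1) \setminus B(f, p^0, p^1)$ is infinite, and by (\ref{def_B}) each $m \in R$ carries a witness: a node $t_m = (s_m, \vec{i}_m, \vec{x}_m, \vec{d^0_m}, \vec{d^1_m}) \in T$ together with an index $j_m < \lh(\vec{i}_m)$. From $\chi^{\dagger}(\vec{x}_m(j_m)) \sqsubseteq f$ and $m \in \dom(\chi^{\dagger}(\vec{x}_m(j_m)))$ one reads off $\lh(\chi^{\dagger}(\vec{x}_m(j_m))) > m$; since $\chi^{\dagger}$ does not lengthen finite sequences and $\lh(\vec{x}_m(j_m)) = k(s_m)$, this forces $k(s_m) > m$, hence $k(s_m) \to \infty$, and the initial segments $\chi^{\dagger}(\vec{x}_m(j_m))$ of $f$ have length tending to infinity; consequently $\vec{x}_m(j_m) \to \chi(f)$, while $\vec{d^0_m}(j_m) = p^0 \res k(s_m) \to p^0$ and $\vec{d^1_m}(j_m) = p^1 \res k(s_m) \to p^1$. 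Finally, since $f$ is injective the intervals $I_{n(f(m))}$ for $m \in R$ are infinitely many, so after thinning $R$ we may assume $m \mapsto n(f(m))$ is injective on $R$.

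Next I would identify the ``trigger'' forcing each removal. Fix a large $m \in R$. At $t_m$ the generator $\chi^{\dagger}(\vec{x}_m(j_m))$ is an initial segment of $f$ containing $m$ in its domain, so it maps $m$ to $f(m) \in I_{n(f(m))}$; using the continuity of $\vartheta$, $B_0$ and $e$ recorded above, for $m$ large we get $m \in B_0(\chi^{\dagger}(\vec{x}_m(j_m)), \vec{d^0_m}(j_m), \vec{d^1_m}(j_m))$ and therefore $\chi^{\dagger}(\vec{x}_m(j_m))\bigl[B_0(\chi^{\dagger}(\vec{x}_m(j_m)), \vec{d^0_m}(j_m), \vec{d^1_m}(j_m))\bigr] \cap I_{n(f(m))} \neq \emptyset$; moreover the realization conjunct of (\ref{def_B}) supplies $l_m \in D(s_m) \cap I_{n(f(m))}$ on which $s_m$ realizes $w(\vec{i}_m, \vec{x}_m, \vec{d^0_m}, \vec{d^1_m})$ at level $n(f(m)) \le k(s_m)$. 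The last conjunct of (\ref{def_B}) says that $t_m$ is $<$-minimal in its chain of $<$-predecessors with $m \in \dom(\chi^{\dagger}(\vec{x}_m(j_m)))$ and $k(s_m) \ge n(f(m))$; let $t_m^{\ast}$ be its immediate $<$-predecessor. Feeding the above into the recursion defining $\psi$, the equality $\psi(t_m)(j_m) = 0$ forces one of: \textbf{(I)} the shorter $j_m$-th generator's $B_0$-approximation at $t_m^{\ast}$ already meets $I_{n(f(m))}$; or \textbf{(II)} $\psi(t_m^{\ast})(j_m) = 1$ and $t_m$ is a node at which the semaphore resets to $0$. For otherwise the displayed four-line condition for coordinate $j_m$ would hold at $t_m$ via the interval $I_{n(f(m))}$, whence $\psi(t_m)(j_m)$ would be set to $1$.

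It remains to rule out (I) and (II) for all but finitely many $m \in R$. Using that each $m$ has only finitely many candidate witnesses (by Claim \ref{ad} and the decidability remark following (\ref{def_B})), together with the convergences of the first paragraph and a König-type argument, one passes to an infinite $R' \subseteq R$ along which the nodes $t_m$ lie on a single $<$-chain $b$ and use one fixed index $j$, the chain converging to a coherent pair $(s_\infty, w_\infty)$ whose $j$-th generator is $(\chi(f), p^0, p^1)$. Along $b$ the $j$-th generator's $B_0$-approximation converges to $f[B_0(f, p^0, p^1)]$, which meets infinitely many intervals, and for every $m \in R'$ the interval $I_{n(f(m))}$ is word-coded at $t_m$; so $s_\infty$ agrees with $e(w_\infty)$ on the infinitely many points $l_m \in D(s_\infty)$, which lie in pairwise distinct intervals, and these points are linearly ordered by $<^{s_\infty}_0$. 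One then exploits the hypotheses that $B_0(f, p^0, p^1)$ is a $<^f_0$-antichain which is $<^f_1$-linearly ordered or a $<^f_1$-antichain, together with the spacedness of $\range(\vartheta_f)$ (property (iv)), to see that this configuration cannot persist: it forces either an infinite $<^{s_\infty}_0$-chain matched to $B_0(f, p^0, p^1)$ incompatibly with its being a $<^f_0$-antichain, or an excess of $<^f_1$-comparabilities inside $B_0(f, p^0, p^1)$, against the dichotomy hypothesis. Hence (I) and (II) hold for only finitely many $m$, so $R$ is finite, a contradiction; and this is exactly the place where the hypotheses on $B_0(f,p^0,p^1)$ are consumed.

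The main obstacle is precisely this last step: converting ``(I) or (II) holds for $m$'' into a genuine quantitative contradiction. This is the delicate semaphore bookkeeping carried out in the analogues of \cite{compactness_med} and \cite{constructing_mcg}; the hard points are (a) extracting the coherent limit branch and showing that the $j$-th generator's $B_0$-approximation newly reaches word-coded intervals only along an increasing sequence, so that each reset of $\psi$ can absorb only boundedly many witness nodes, and (b) translating ``the $l_m$ ($m\in R'$) are infinitely many agreement points of $s_\infty$ with $e(w_\infty)$ in distinct intervals'' into a violation of the $<^f_0$-antichain / $<^f_1$-dichotomy hypotheses via the definitions of $<^f_0$ and $<^f_1$. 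Everything else -- the convergences, the case analysis of the second paragraph, the continuity properties of $e$, $\vartheta$, $D$ and $B_0$, and the basic combinatorics of the recursive trees $T$ and $(\injfin, \sqsubseteq)$ -- is routine.
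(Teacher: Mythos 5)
There is a genuine gap: your argument is set up as a proof by contradiction (assume $B(f,p^0,p^1)$ finite, extract witnesses for the infinitely many removed points, pass to a coherent limit chain), but the decisive step --- converting ``each removed $m$ carries a witness node with $\psi(\cdot)(j_m)=0$'' into an actual contradiction --- is not carried out; you explicitly defer it as ``the main obstacle''. Since that is exactly the combinatorial content of the claim, the proposal does not prove it. Moreover, the contradiction you sketch is aimed at the wrong objects: the points $l_m$ supplied by the witnessing conjunct of (\ref{def_B}) lie in $D(s_m)$ and record agreement of the \emph{approximated} function $s_m$ (the function being caught by the word) with $e$ of that word; a $<^{s_\infty}_0$-chain among these $l_m$ is about $s_\infty$, not about $f$, so it does not clash with the hypothesis that $B_0(f,p^0,p^1)$ is a $<^f_0$-antichain, nor with the $<^f_1$-dichotomy, and no bridge between the two is provided. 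Your case split (I)/(II) on how $\psi(t_m)(j_m)$ can be $0$ is plausible, but without the quantitative bookkeeping it yields nothing.

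The paper's own proof is structurally different and much more direct, and does not pass through a contradiction or a limit branch. It splits on the two hypotheses of Claim \ref{inf_B}: in case (b) one verifies that the removal condition (\ref{def_B}) can never be satisfied, so $B(f,p^0,p^1)=B_0(f,p^0,p^1)$ and there is nothing to prove; in case (a) one argues an interleaving property: whenever some $m\in B_0(f,p^0,p^1)$ is removed (which requires the semaphore value $0$ at the minimal witnessing node), the very event licensing the removal is the trigger in the definition of $\psi$ that flips the relevant coordinate to $1$, and while that coordinate is $1$ no further element of $B_0(f,p^0,p^1)$ can be removed, so some $m'>m$ survives into $B(f,p^0,p^1)$. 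This ``for every removed $m$ there is a retained $m'>m$'' statement immediately gives infinitude. If you want to salvage your route, you would have to prove precisely this toggling behaviour of $\psi$ (your point (a)) and the case-(b) vacuity, at which point the contradiction framework and the compactness extraction become unnecessary.
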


    \begin{proof}
        In case (b) holds, it is easy to verify that $B(f, p^0, p^1) = B_0(f, p^0, p^1)$. So assume that (a) holds. The definition of $\psi$ makes sure that for every $m \in B_0(f, p^0, p^1) \setminus B(f, p^0, p^1)$, there is some $m' > m$ with $m' \in B(f, p^0, p^1)$. In particular, $B(f, p^0, p^1)$ is infinite. The details are left to the reader.
    \end{proof}

    Finally, we prove that $B$ is superspaced.

    \begin{claim}\label{superspaced_claim}
        Suppose that for $g \in \injinf$, $\vec{x} = x_{k}^{i_k} \cdots x_{0}^{i_0} \in \F(2^\omega)$ and $\vec{d^j} = (d^j_k)^{i_k} \cdots (d^j_0)^{i_0} \in \F(2^\omega)$ for $j \in 2$ it holds that the set
    \[
    I := \{n \in D(g) \, | \, g(n) = e(\vec{x}, \vec{d^0}, \vec{d^1})(n)\}
    \]
    is infinite. Then, setting
    \[
        J := \{j \in k + 1 \, | \, x_j \in \range(\chi) \land \chi^{\dagger}(x_j) \neq g \land d^0_j, d^1_j \text{ are good}\},
    \]
    there are infinitely many $m \in I$ for which
    \[
    (\forall j \in J)\, \chi^{\dagger}(x_j)[B(\chi^{\dagger}(x_j), d^0_j, d^1_j)] \cap I_{n(m)} = \emptyset.
    \]
    \end{claim}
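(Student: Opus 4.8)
The plan is to extract the claim from the definition of $B$ via the tree $T$ and the ``semaphore'' $\psi$, in the spirit of the analysis of the semaphore algorithm in \cite{constructing_mcg}. Write $w$ for the reduced word $(\vec x,\vec{d^0},\vec{d^1})\in\F(2^\omega\times 2^\omega\times 2^\omega)$ and, for $N\in\omega$, put $w_N:=r^\omega_N(w)$, which lies in $W_N$ once $N\ge k+1\ge\lh(w_N)$. First I would attach to $(g,w)$ the canonical $<$-increasing chain $(t_N)_N$ in $T$ with $t_N=(g\uph\ell_N,\vec i,\vec x_N,\vec{d^0}_N,\vec{d^1}_N)$, where $\ell_N=\sum_{m\le N}|I_m|$, $\vec x_N(j')=x_{j'}\uph N$, $\vec{d^0}_N(j')=d^0_{j'}\uph N$, $\vec{d^1}_N(j')=d^1_{j'}\uph N$; then $k(s_N)=N$, $w(t_N)=w_N$, the strict predecessor of $t_N$ is $t_{N-1}$, and $\chi^{\dagger}(x_{j'}\uph N)\sqsubseteq\chi^{\dagger}(x_{j'})$ whenever $x_{j'}\in\range(\chi)$. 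The next step is a dictionary between $I$ and ``relevant'' intervals for $w$ along $g$: for $m\in I$ with $n(m)\ge k+1$, the interval $I_{n(m)}$ is relevant at the node $t_{n(m)}$ in the sense that $l:=m$ witnesses $l\in D(g\uph\ell_{n(m)})$ and $\delta(g\uph\ell_{n(m)},l)=w_{n(m)}=r^{n(m)}_{n(m)}(w(t_{n(m)}))$; here one uses property (i) of $\vartheta$ to pass from $D(g)$ to $D(g\uph\ell_{n(m)})$, and the fact that, for all but finitely many $m\in I$, $w_{n(m)}$ is the \emph{unique} element of $W_{n(m)}$ moving $m$ to $g(m)$, so that $\delta$ is defined and has this value. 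Conversely, since $e$ respects the interval partition, every relevant interval is of the form $I_{n(m)}$ for some $m\in I$.

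The heart of the argument is an analysis of $\psi$ along $(t_N)_N$. Call $N$ a \emph{reset level} if $\psi(t_N)$ is the zero sequence of length $k+1$, and a \emph{run} a maximal block of consecutive non-reset levels. By the recursive clause defining $\psi$, at each non-reset level at least one coordinate flips from $0$ to $1$, and within a run each coordinate flips at most once; hence every run has length at most $k+1$, so reset levels recur with gaps at most $k+2$. I would then argue, combining this bounded-gap property with the dictionary above, that there are infinitely many $m\in I$ for which $n(m)$ is a reset level (and $n(m)\ge k+1$ and the uniqueness of $\delta$ at $m$ holds). Once such an $m$ is fixed, the claim for $m$ follows: given $j\in J$, write $g_j=\chi^{\dagger}(x_j)$; if $g_j[B_0(g_j,d^0_j,d^1_j)]\cap I_{n(m)}=\emptyset$ we are done, and otherwise, picking $m'\in B_0(g_j,d^0_j,d^1_j)$ with $g_j(m')\in I_{n(m)}$, the node $t_{n(m)}$ and the coordinate $j$ verify \emph{all} the conjuncts of (\ref{def_B}) for ``$m'\notin B(g_j,d^0_j,d^1_j)$'': $\chi^{\dagger}(x_j\uph n(m))\sqsubseteq g_j$; the $\vec{d^0},\vec{d^1}$ components equal $d^0_j\uph n(m)$, $d^1_j\uph n(m)$; $m'\in\dom(\chi^{\dagger}(x_j\uph n(m)))$ by the rapid growth built into $\vartheta$ (so $m'$ is ``early'' relative to the interval $I_{n(g_j(m'))}$); the relevance witness $l=m$ is the previous paragraph; $\psi(t_{n(m)})(j)=0$ because $n(m)$ is a reset level; and the minimality clause is vacuous since every $<$-predecessor $t_*$ of $t_{n(m)}$ has $k(s_*)<n(m)=n(g_j(m'))$. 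Hence $m'\notin B(g_j,d^0_j,d^1_j)$, and (applying this to every such $m'$) $g_j[B(g_j,d^0_j,d^1_j)]$ misses $I_{n(m)}$.

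The step I expect to be the main obstacle is showing that $\{n(m)\mid m\in I\}$ meets the reset levels infinitely often, rather than possibly nesting entirely inside runs. Here one must use that a coordinate $j\in J$ can be forced to flip only when the image of its $B_0$-set enters a relevant interval; that, by Claim \ref{ad}(1) (since $\chi^{\dagger}(x_j)\ne g$), this can involve an interval actually \emph{containing} a point of $D(g_j)$ only finitely often; and that the remaining, ``forward'' entries, together with the rapid growth of $\vartheta$, cannot sustain a run straddling a relevant interval. The remaining ingredients — well-definedness of $(t_N)_N$ in $T$, the transition from $D(g)$ to $D(g\uph\ell_N)$ via property (i), uniqueness of $\delta$, and the domain estimate $m'\in\dom(\chi^{\dagger}(x_j\uph n(m)))$ — are routine bookkeeping of the kind carried out in detail in \cite{constructing_mcg}, and I would only sketch them.
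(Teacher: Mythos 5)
Your overall frame (the canonical chain of nodes of $T$ attached to $(g,\vec x,\vec{d^0},\vec{d^1})$, the dictionary between $m\in I$ and intervals that are ``relevant'' for those nodes, and a run/reset analysis of $\psi$) is in the spirit of the paper's argument, which likewise splits off the coordinates that conflict only finitely often and then analyses $\psi$ along exactly these nodes. However, there is a concrete error at the point where you verify the conjuncts of (\ref{def_B}) using the canonical node at level $n(m)$ as witness: the conjunct $m'\in\dom\big(\chi^{\dagger}(x_j\res n(m))\big)$ is \emph{never} satisfied. Indeed, $m'$ enters $\dom\big(\chi^{\dagger}(x_j\res L)\big)$ only once $x_j\res L$ contains at least $m'+1$ ones, i.e.\ only for $L\ge m'+1+\sum_{i\le m'}g_j(i)>g_j(m')$, while $g_j(m')\ge\min I_{n(m)}\ge n(m)$ because $\sum_{i<n}|I_i|<|I_n|$ forces the intervals to grow geometrically. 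So the rapid growth of $\vartheta$ works against you, not for you: since $m'\in B_0(g_j,d^0_j,d^1_j)\subseteq\range(\vartheta_{g_j})$ it is a very large number, and by the minimality clause of (\ref{def_B}) the witnessing node is forced to sit at the much later level at which $m'$ first enters the coded domain of $x_j$, not at level $n(m)$. Consequently the value of $\psi(\cdot)(j)$ that decides whether $m'$ is removed is its value at that later level, and your central target --- ``infinitely many $m\in I$ with $n(m)$ a reset level'' --- would not yield the conclusion even if it were proved; and it is precisely the step you yourself flag as unproved, so the argument is incomplete on that count as well.

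For comparison, the paper's proof sets $J_0:=\{j\in J\mid(\exists^\infty m\in I)\ g_j[B(g_j,d^0_j,d^1_j)]\cap I_{n(m)}\neq\emptyset\}$, chooses $m_0$ so that all $j\in J\setminus J_0$ cause no conflicts on $I_{n(m)}$ for $m\in I\setminus m_0$, and then performs a case analysis of $\psi$ on the nodes $\big(g\res\min I_{n(l)+1},\,\vec i,\,\vec x\res n(l),\,\vec{d^0}\res n(l),\,\vec{d^1}\res n(l)\big)$ (componentwise restrictions), concluding from the definition of $B$ that infinitely many relevant intervals are missed by all $g_j[B(g_j,d^0_j,d^1_j)]$ with $j\in J_0$; the decisive levels in that analysis are those at which a conflict at a coordinate $j\in J_0$ becomes newly visible to $\psi$ (via $B_0$ of the finite approximations), which coincide, up to the bookkeeping you call routine, with the levels at which the (\ref{def_B})-witness must sit. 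Your run-length bound ($\le k+1$ flips before a reset) is correct and can still be useful, but it has to be applied at those conflict/domain-entry levels of the pairs $(m',j)$ rather than at the levels $n(m)$, and the relation between the level where $\psi$ sees the conflict and the level of the (\ref{def_B})-witness is exactly the point that needs to be tracked carefully.
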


    \begin{proof}
        Set $g_j := \chi^{\dagger}(x_j)$ for every $j \in k+1$. Then let
        \[
        J_0 := \{j \in J \, | \, (\exists^\infty m \in I)\, g_j[B(g_j, d^0_j, d^1_j)] \cap I_{n(m)} \neq \emptyset \}.
        \]
        Let $m_0 \in \omega$ be such that for every $m \in I \setminus m_0$ it holds that
        \[
        (\forall j \in J \setminus J_0)\, g_j[B(g_j, d^0_j, d^1_j)] \cap I_{n(m)} = \emptyset.
        \]
        Let $\vec{i} := (i_0, \ldots, i_k)$ and for $n \in \omega$ use $\vec{x} \res n$ to denote $(x_0 \res n, \ldots, x_k \res n)$, $\vec{d^0} \res n$ to denote $(d^0_0 \res n, \ldots, d^0_k \res n)$ and  $\vec{d^1} \res n$ to denote $(d^1_0 \res n, \ldots, d^1_k \res n)$. For $l \in \omega$, we also let $N(l) := \min I_{n(l) + 1}$ (recall that $n(l)$ is the unique $n$ such that $l \in I_n$). By case analysis of the definition of $\psi$ on
        \[
        \big(g \res N(l), \vec{i}, \vec{x} \res n(l), \vec{d^0} \res n(l), \vec{d^1} \res n(l)\big),
        \]
        one can see from the definition of $B$ that there are infinitely many $m \in I \setminus m_0$, so that for every $j \in J_0$ it holds that
        \[
         g_j[B(g_j, d^0_j, d^1_j)] \cap I_{n(m)} = \emptyset.
        \]
        With this the proof is complete.
    \end{proof}
    
    \subsection{The construction}\label{subsection_construction}

    We finally begin the construction. Define $\dot{e}: 2^\omega \times 2^\omega \times 2^\omega \to S_\infty$ on $(x, c^0, c^1) \in 2^\omega \times 2^\omega \times 2^\omega$ 
    as follows. With $g := \chi^{\dagger}(x) \in \injfininf$ let
    \begin{equation*}
    \Dot{e}(x, c^0, c^1)(n) := \begin{cases}
    g(n) & \parbox[t]{9cm}{if $n \in B(g, c^0, c^1)$, $c^0 \res (n + 1)$, $c^1 \res (n + 1)$ are good and $\neg (\exists n_0, n_1 \in B_0(g, c^0, c^1) \cap n)\, n_0 <^g_0 n_1$;}\\
    e(x, c^0, c^1)(m) & \parbox[t]{9cm}{if $m := g^{-1}(n)$ is defined and every condition from case 1 is satisfied when we replace $n$ with $m$;}\\
    e(x, c^0, c^1)^2(n) & \parbox[t]{9cm}{if $m := (g^{-1} \circ e(x, c^0, c^1))(n)$ is defined and every condition from case 1 is satisfied when we replace $n$ with $m$;}\\
    e(x, c^0, c^1)(n) & \text{otherwise.}
    \end{cases}
    \end{equation*}

    \begin{fact}\label{f.spaced.used1}
    The map $\Dot{e}$ is well-defined and $\Dot{e}(x, c^0, c^1)$ is a permutation for every $(x, c^0, c^1) \in 2^\omega \times 2^\omega \times 2^\omega$.
    \end{fact}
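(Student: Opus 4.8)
The plan is to realise $\Dot{e}(x, c^0, c^1)$ as $f := e(x, c^0, c^1)$ --- a permutation of $\omega$ by the proposition above --- post-composed with a permutation of $\omega$ which is a product of pairwise commuting cycles, from which the conclusion is immediate. Fix $(x, c^0, c^1)$, put $g := \chi^{\dagger}(x)$, and let $A \sq \omega$ be the set of those $n$ to which the first clause of the definition of $\Dot{e}$ applies; thus $A \sq B(g, c^0, c^1) \sq B_0(g, c^0, c^1) \sq \range(\vartheta_g) \cap \dom(g)$, so by $g$-spacedness ($\vartheta_g$ has property (iv)) the set $A$ meets each interval in at most one point, and by the exclusion built into the definition of $B_0$ no $m \in A$ satisfies $g(m) = f(m)$. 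Unwinding the remaining clauses, one sees that the second clause applies to $n$ precisely when $n \in g[A]$, the third precisely when $n \in f^{-1}[g[A]]$, and the fourth to all remaining $n$; the clauses are exhaustive, so --- reading them with the earlier clauses taking precedence --- $\Dot{e}(x, c^0, c^1)$ is a total function $\omega \to \omega$, which is the well-definedness assertion.

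The technical core is a \emph{collision lemma}: for distinct $m, m' \in A$ the triples $T_m := \{m, g(m), f^{-1}(g(m))\}$ and $T_{m'}$ are disjoint, and likewise the triples $T'_m := \{f(m), g(m), f(g(m))\}$ and $T'_{m'}$ are disjoint. I would prove this by combining four facts: (a) $e(\cdot, \cdot, \cdot)$ respects the interval partition, so $f^{\pm 1}$ keeps its argument in the same interval; (b) $g$-spacedness of $A$, giving that for distinct $m, m' \in A$ the point $m$ lies outside the intervals of $m'$, $g(m')$ and $g^{-1}(m')$; (c) property (iii) of $\vartheta_g$, that $n(l) \le n(g(l))$ for $l \in \range(\vartheta_g)$, where $n(\cdot)$ denotes, as elsewhere, the interval index ($l \in I_{n(l)}$); and (d) the fact, which one reads off from the definitions of $F$, $\xi_g$, $\vartheta_g$ and the size condition on the intervals, that $g$ sends distinct members of $\range(\vartheta_g)$ into distinct intervals --- indeed, for $k < k'$, the choice of $\xi_g(k')$ forces $\min I_{F(g \res (k'+1), \xi_g(k'))}$ to exceed $g(\vartheta_g(k))$, so $n(g(\vartheta_g(k))) < F(g \res (k'+1), \xi_g(k')) = n(\vartheta_g(k')) \le n(g(\vartheta_g(k')))$ using (iii). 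Running through the (at most) nine pairs of elements from $T_m \times T_{m'}$, resp.\ from $T'_m \times T'_{m'}$, and invoking (a)--(d) each time settles the lemma; the same facts show in addition that for $m \in A$ each of $g(m)$ and $f^{-1}(g(m))$ avoids $A \cup g[A]$ except in the degenerate cases below, so that the clause acting at a point of $T_m$ always refers to $m$.

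With the collision lemma in hand I would conclude as follows. Fix $m \in A$. In the generic case $g(m) \ne m$ and $f(g(m)) \ne g(m)$ the three points $m$, $g(m)$, $f^{-1}(g(m))$ are distinct (distinctness of $m$ from $f^{-1}(g(m))$ being exactly $g(m) \ne f(m)$), none of them is touched by a clause belonging to a different $m' \in A$, and the definition of $\Dot{e}$ restricts on the block $T_m$ to $m \mapsto g(m)$, $g(m) \mapsto f(m)$, $f^{-1}(g(m)) \mapsto f(g(m))$; since $f$ itself restricts on $T_m$ to the bijection onto $T'_m = f[T_m]$, this says precisely that on $T_m$ we have $\Dot{e}(x,c^0,c^1) = \rho_m \circ f$, where $\rho_m = (f(m)\ g(m)\ f(g(m)))$ is a genuine $3$-cycle of the set $T'_m$. (If instead $g(m) = m$, or $g(m)$ is a fixed point of $f$, the block collapses to two points and $\rho_m$ to a transposition contained in $T'_m$, but the identity $\Dot{e}(x,c^0,c^1) = \rho_m \circ f$ on the block still holds, again using $g(m) \ne f(m)$.) Off $\bigcup_{m \in A} T_m$ the map $\Dot{e}(x,c^0,c^1)$ agrees with $f$, so $\Dot{e}(x,c^0,c^1) = \big(\prod_{m \in A} \rho_m\big) \circ f$; by the collision lemma the $\rho_m$ have pairwise disjoint supports, hence their product is a well-defined permutation of $\omega$, and composing with the permutation $f$ yields a permutation of $\omega$, as required.

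The one genuinely laborious step is the collision lemma: it is where all the arithmetic engineered into $\vartheta_g$ (via $F$, $\xi_g$, the interval-size condition, and properties (iii)--(iv)) gets used, and where one must also dispose of the degenerate configurations in which $g$ or $f$ fixes one of the points $m$, $g(m)$, $f^{-1}(g(m))$. Everything after that is the routine bookkeeping indicated above.
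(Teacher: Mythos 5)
Your proposal is correct and is essentially the paper's own argument written out in full: the paper's proof just observes that spacedness of $B(g,c^0,c^1)$ makes the cases mutually exclusive and that the first three cases jointly perform the ``surgery'' of Figure~\ref{fig_surgery}, leaving the details to the reader, and your decomposition $\dot{e}(x,c^0,c^1)=\bigl(\prod_{m\in A}\rho_m\bigr)\circ e(x,c^0,c^1)$ into disjointly supported $3$-cycles is exactly that surgery, with your collision lemma (via property (iv), property (iii), and the observation that $g$ sends distinct points of $\range(\vartheta_g)$ into distinct intervals) supplying the omitted details. Your explicit handling of the degenerate blocks, where $g(m)=m$ or $e(x,c^0,c^1)$ fixes $g(m)$ and one must read the clauses with the earlier ones taking precedence, is if anything more careful than the paper's bare assertion of mutual exclusivity.
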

    \begin{proof}
    Note that the cases in the above definition are mutually exclusive, thanks to the definition of $D(g)$ (we are still using $g := \chi^{\dagger}(x)$), which makes sure that $D(g)$ and hence also $B(g, c^0, c^1)$ are spaced. When one of the first three cases takes place, the other two of the first three cases make sure that $\dot{e}(x, c^0, c^1)$ is a permutation. When this happens, we say that we are performing a surgery operation (since we are surgically joining two orbits; see Figure \ref{fig_surgery}). The details are left to the reader.
    \end{proof}
    
    Note that in case $x \notin \range(\chi)$ it holds that $g = \chi^{\dagger}(x) \in \injfin$. In this case, we only perform finitely many surgeries and afterwards only use $e$. The definition of $\dot{e}$ is elucidated in Figure \ref{fig_surgery} below (adapted after Figure 1 from \cite[p.~8]{constructing_mcg}), where the orbit structure is shown before (in black) and after (in wine red) the surgery operation.

    \begin{figure}[ht]
    \begin{tikzcd}
    \dots & \arrow[l, "{e(x, c^0, c^1)}"] \arrow[l, rdeca, bend right, dashed]
    ({e(x, c^0, c^1)}\circ g)(n)    &  \arrow[l, "{e(x, c^0, c^1)}"]    g(n) \arrow[dl, rdeca, bend left, dashed] &  \arrow[l, "{e(x, c^0, c^1)}"] \arrow[ll, rdeca, bend right, dashed, "{\dot{e}(x, c^0, c^1)}"] ({e(x, c^0, c^1)}^{-1}\circ g)(n) & 
    \arrow[l, "{e(x, c^0, c^1)}"] \arrow[l, rdeca, bend right, dashed] \dots \\
    \dots & \arrow[l, "{e(x, c^0, c^1)}"] \arrow[l, rdeca, bend right, dashed]
    e(x, c^0, c^1)(n)   &  \arrow[l, "{e(x, c^0, c^1)}"] \arrow[u, modra, "g"]  \arrow[u, rdeca, bend right, dashed]   n   &  \arrow[l, "{e(x, c^0, c^1)}"]  \arrow[l, rdeca, bend right, dashed] e(x, c^0, c^1)^{-1}(n) & 
    \arrow[l, "{e(x, c^0, c^1)}"] \arrow[l, rdeca, bend right, dashed] \dots
    \end{tikzcd}

    \caption{The definition of $\dot{e}(x, c^0, c^1)$, where $g = \chi^{\dagger}(x)$, $n \in B(g, c^0, c^1)$, $\neg (\exists n_0, n_1 \in B_0(g, c^0, c^1) \cap n)\, n_0 <^g_0 n_1$ and $c^0 \res n$, $c^1 \res n$ are both good.}\label{fig_surgery}
    \end{figure}
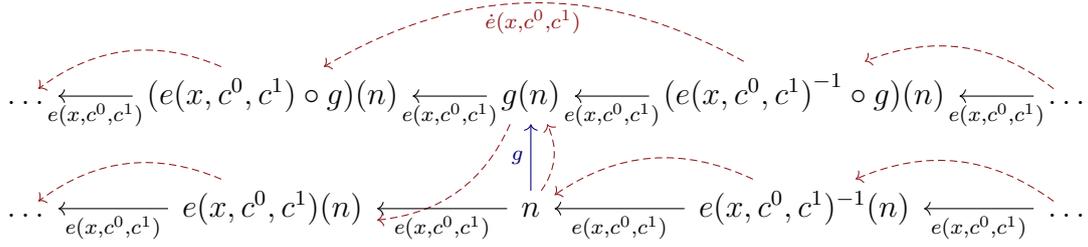
    
    The following is a reformulation of Theorem \ref{sigma_mcg}.
    
    \begin{theorem}\label{constructed_mcg}
        The set $\range(\dot{e})$ is a $\Pi^0_1$ subset of $\baire$
        and freely generates a $\Sigma^0_2$ mcg.
    \end{theorem}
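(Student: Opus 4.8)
\emph{Setup and main tool.} Write $\dot{G}$ for the group generated by $\range(\dot{e})$; extending $\dot{e}$ to a homomorphism on words via the universal property of the free group, $\dot{G} = \dot{e}\big[\F(2^\omega \times 2^\omega \times 2^\omega)\big]$. The plan is to prove, in turn, that $\range(\dot{e})$ is $\Pi^0_1$, that $\dot{e}$ is injective on $\F(2^\omega \times 2^\omega \times 2^\omega)$ (so $\range(\dot{e})$ freely generates $\dot{G}$), that $\dot{G}$ is cofinitary, and that $\dot{G}$ is maximal; the $\Sigma^0_2$ bound for $\dot{G}$ comes out of the decoding used for the first point. The common tool is a \emph{perturbation analysis}. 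First I would record that $\dot{e}(x,c^0,c^1)$ differs from $e(x,c^0,c^1)$ only at the surgery sites of cases 1--3 in its definition, and these all lie in $B(g,c^0,c^1) \cup g[B(g,c^0,c^1)] \cup e(x,c^0,c^1)^{-1}[g[B(g,c^0,c^1)]]$ with $g = \chi^{\dagger}(x)$; by $g$-spacedness of $\range(\vartheta_g) \supseteq B(g,c^0,c^1)$ (property (iv) above) this set meets each $I_n$ in at most an absolute constant number of points, and it is finite when $g \in \injfin$. Since every $e(x,c^0,c^1)$ preserves each $I_n$ setwise, so do the partial products $p_j := e(x_{j-1},d^0_{j-1},d^1_{j-1})^{i_{j-1}} \circ \dots \circ e(x_0,d^0_0,d^1_0)^{i_0}$ of a reduced word $w = (x_k,d^0_k,d^1_k)^{i_k}\cdots(x_0,d^0_0,d^1_0)^{i_0}$, and a short path-tracing argument gives $\{m : \dot{e}(w)(m) \neq e(w)(m)\} \subseteq \bigcup_{j \le k} p_j^{-1}[S_j]$, where $S_j$ is the disagreement set of the $j$-th factor; so this set too meets each $I_n$ boundedly, and finitely when every factor's $g$ is finite.

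\emph{Freeness and cofinitariness.} For a reduced nonempty $w$ and all large $n$, $r^\omega_n(w) \in W_n$ is reduced and nonempty, so by the proposition on $e$ above $e(w)\res I_n = \sigma_n(e_n(r^\omega_n(w)))$ is fixed-point-free --- it is left translation by a nontrivial element of the finite group $G_n$. I would then argue that the three-case structure of $\dot{e}$ (cf.\ Fact~\ref{f.spaced.used1}) makes each surgery a splicing of two orbit-segments of $e(w)$ into one, while the clause $\neg(\exists n_0,n_1 \in B_0(g,c^0,c^1)\cap n)\,n_0 <^g_0 n_1$ suppresses surgery altogether whenever $B_0(g,c^0,c^1)$ fails to be a $<^g_0$-antichain --- in particular whenever it contains two $g$-fixed points --- so that, using spacedness (Claim~\ref{ad}(1) and property (iv)) to keep distinct surgeries in disjoint intervals, $\dot{e}(w)$ has only finitely many fixed points and hence $\dot{e}(w) \neq \id_\omega$. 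This is the analogue for $\dot{e}$ of Propositions~1.3 and~1.4 of \cite{constructing_mcg}, and it gives both cofinitariness of $\dot{G}$ and injectivity of $\dot{e}$.

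\emph{Complexity.} The key is that $\dot{e}$ can be decoded. If $\phi = \dot{e}(w)$ then, for all large $n$, $\phi\res I_n$ agrees off boundedly many points with $\sigma_n(g)$ for the unique $g = e_n(r^\omega_n(w)) \in G_n$; since $\sigma_n$ is left translation, $g$ is the (eventually unique) majority value of $\big\{\Phi_n^{-1}(\phi(k)) \cdot \Phi_n^{-1}(k)^{-1} : k \in I_n\big\}$, which is computable; then $r^\omega_n(w) \in W_n$ is recovered from $g$ by injectivity of $e_n\res W_n$, and since the $r^\omega_n(w)$ cohere under $r^{n+1}_n$, $w$ is recovered in the limit. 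Thus $\phi \in \range(\dot{e})$ iff, for all $n$ beyond an absolute threshold, this majority exists, its $e_n$-preimage in $W_n$ is a single generator, these generators cohere, and $\phi = \dot{e}$ of their union: each clause is $\Pi^0_1$ and the threshold is a constant, so $\range(\dot{e})$ is $\Pi^0_1$. For $\dot{G}$ itself the only change is that the length $K$ of $w$, hence the per-interval discrepancy bound $C(K)$ and the starting level $n_0$, are not known in advance; so $\phi \in \dot{G}$ iff $(\exists K)(\exists n_0)$ such that for all $n \ge n_0$, $\phi\res I_n$ agrees off $\le C(K)$ points with some $\sigma_n(g)$, $g \in G_n$, whose $e_n$-preimage in $W_n$ has length $\le K$, these cohere, and $\phi = \dot{e}$ of their union --- a $\Sigma^0_2$ statement.

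\emph{Maximality, and the main obstacle.} Let $f \in S_\infty$ with $\langle \dot{G}, f\rangle$ cofinitary; I want $f \in \dot{G}$. We may assume $f \neq \id_\omega$, so $f$ has finitely many fixed points, and it suffices to produce a reduced word $w$ with $\dot{e}(w)$ agreeing with $f$ on an infinite set --- then $f^{-1}\dot{e}(w) \in \langle \dot{G},f\rangle$ has infinitely many fixed points, so equals $\id_\omega$ and $f = \dot{e}(w) \in \dot{G}$. I would apply Lemma~\ref{dichotomy} to $f$. In case~(2), with the good $d^0,d^1$: if $B_0(f,d^0,d^1)$ is infinite then so is $B(f,d^0,d^1)$ by Claim~\ref{inf_B}, and --- $d^0,d^1$ good and $B_0(f,d^0,d^1)$ a $<^f_0$-antichain --- case 1 of $\dot{e}$ fires on all of $B(f,d^0,d^1)$, so $\dot{e}(\chi(f),d^0,d^1)$ equals $f$ there; if $B_0(f,d^0,d^1)$ is finite then the set subtracted in its definition is cofinite in the infinite set $\{\vartheta_f(\widehat{d^0}(n)):d^1(n)=1\}$, so $f$ agrees with $e(\chi(f),d^0,d^1)$ --- equivalently, since $B(f,d^0,d^1)$ is finite, with $\dot{e}(\chi(f),d^0,d^1)$ --- infinitely often; either way $w=(\chi(f),d^0,d^1)$. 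In case~(1), fix an infinite $<^f_0$-chain $I \subseteq D(f)$; the $<^f_0$-compatibility of the $\delta(f,m)$, $m\in I$, makes them a coherent tower and yields a reduced nonempty word $w_\infty$ with $r^\omega_{n(m)}(w_\infty)=\delta(f,m)$, whence $e(w_\infty)$ agrees with $f$ on $I$. Put $I' := \{n\in D(f): f(n)=e(w_\infty)(n)\} \supseteq I$ and $S := \{m : \dot{e}(w_\infty)(m)\neq e(w_\infty)(m)\}$; by the perturbation analysis $S$ meets each $I_n$ boundedly while $I' \subseteq D(f)$ meets each $I_n$ in at most one point. If $I'\setminus S$ is infinite, take $w = w_\infty$. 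Otherwise a pigeonhole over the factors of $w_\infty$ produces a factor $(x_{j^*},d^0_{j^*},d^1_{j^*})$ with $h:=\chi^{\dagger}(x_{j^*})\in\injinf$, $d^0_{j^*},d^1_{j^*}$ good, and $p_{j^*}[I']\cap S_{j^*}$ infinite; since $p_{j^*}$ and $e(x_{j^*},d^0_{j^*},d^1_{j^*})$ preserve intervals, infinitely many intervals meeting $D(f)$ carry a point of $B(h,d^0_{j^*},d^1_{j^*}) \cup h[B(h,d^0_{j^*},d^1_{j^*})]$, so if $h\neq f$ the first alternative contradicts Claim~\ref{ad}(1) and the second contradicts superspacedness of $B$ (Claim~\ref{superspaced_claim} applied to $g=f$ and $w_\infty$, with $j^*\in J$). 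Hence $h=f$; then $S_{j^*} \subseteq B(f,d^0_{j^*},d^1_{j^*}) \cup f[B(f,d^0_{j^*},d^1_{j^*})] \cup e(x_{j^*},d^0_{j^*},d^1_{j^*})^{-1}[f[B(f,d^0_{j^*},d^1_{j^*})]]$ being infinite forces $B(f,d^0_{j^*},d^1_{j^*})$ infinite, and since a $<^f_0$-comparable pair in $B_0(f,d^0_{j^*},d^1_{j^*})$ would confine all three surgery cases to levels below a fixed point (making $S_{j^*}$ finite), $B_0(f,d^0_{j^*},d^1_{j^*})$ is a $<^f_0$-antichain; so, $d^0_{j^*},d^1_{j^*}$ being good, $\dot{e}(\chi(f),d^0_{j^*},d^1_{j^*})$ equals $f$ on the infinite set $B(f,d^0_{j^*},d^1_{j^*})$, and $w = (\chi(f),d^0_{j^*},d^1_{j^*})$. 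I expect the delicate point to be exactly this case~(1) transfer: $e(w_\infty)$ catches $f$ only on a \emph{sparse} subset of $D(f)$, while $\dot{e}$ disagrees with $e$ on a \emph{sparse} set, so a priori the catch could be lost; it is precisely the superspacedness of $B$ --- and with it the whole $T$, $\psi$ construction --- together with the mutual almost-disjointness of the sets $D(\cdot)$ (Claim~\ref{ad}) that rules this out, and it is why one must track the offending factor through the interval-preserving products $p_j$ rather than the $\dot{e}$-partial products.
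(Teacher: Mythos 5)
Your architecture matches the paper's (maximality via Lemma~\ref{dichotomy}, Claim~\ref{inf_B} and superspacedness; complexity via interval-by-interval decoding of $(x,d^0,d^1)$ and a closed set of finite approximations, essentially the paper's Proposition~\ref{sigma_range}; $\Sigma^0_2$ for the group by one extra existential over the word). But the cofinitariness/freeness step, which is the technical heart of the paper (Proposition~\ref{G_cofinitary} with Claims~\ref{w_reduces_to_empty} and~\ref{empty_word} and their four subclaims), is asserted rather than proved, and the assertion does not follow from what you give. Your perturbation analysis shows that $\dot{e}(w)$ and $e(w)$ differ on a set meeting each $I_n$ in boundedly many points; since $e(w)\res I_n$ is fixed-point free for large $n$, every late fixed point of $\dot{e}(w)$ lies in that difference set --- but ``boundedly many per interval'' over infinitely many intervals is still an infinite set, so sparsity alone does not bound the fixed points of $\dot{e}(w)$. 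The real danger is interaction between factors: a surgery value $g_j(m)$ produced by one factor can be matched by $e$-values of the other factors, closing a cycle and creating a fixed point, and this can a priori happen in infinitely many intervals (Claim~\ref{ad} only separates the surgery sets of \emph{distinct} triples, it does not prevent a single factor's surgery from being ``caught'' by an $e$-word). Ruling this out is exactly the content of the paper's argument: unfold each fixed-point path into an effective word $w^*$ in abstract letters $g^*_k$ and $e(\cdot,\cdot,\cdot)$, show no letter $g^*_j$ can occur in the reduced form because the catch set would be a $<^{g^*_j}_0$-increasing chain while surgery only occurs as long as $B_0$ is a $<^{g^*_j}_0$-antichain, conclude $w^*$ reduces to the empty word, and then (Claim~\ref{empty_word}) that $w^*$, hence $w$, \emph{is} empty --- which is also what delivers freeness. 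None of this is in your sketch; ``each surgery splices two orbit-segments, so only finitely many fixed points'' skips the entire argument.

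A secondary, repairable flaw sits in your maximality case~(1). After the pigeonhole you claim that if the offending factor has $h=\chi^{\dagger}(x_{j^*})\neq f$, then ``infinitely many intervals meeting $I'$ carry a point of $h[B(h,d^0_{j^*},d^1_{j^*})]$'' contradicts superspacedness. It does not: Claim~\ref{superspaced_claim} only guarantees infinitely many \emph{clean} $m\in I'$ (intervals avoiding $g_j[B(g_j,d^0_j,d^1_j)]$ for all $j\in J$ simultaneously); it says nothing about the other intervals, so a single factor with $h\neq f$ interfering on infinitely many intervals is perfectly consistent with it, and your conclusion $h=f$ does not follow. The paper avoids this by not arguing per offending factor at all: it first disposes of the subcase where some factor already has $g_j=f$ with good sequences, antichain condition and infinite $B$, and then uses superspacedness \emph{positively} (together with Claim~\ref{ad}(1) and finiteness of the interference from bad or finite factors, via the thresholds $m_0,\dots,m_3$) to extract an infinite $I''\subseteq I'$ on which $\dot{e}(\vec{x},\vec{d^0},\vec{d^1})=e(\vec{x},\vec{d^0},\vec{d^1})=g$; your argument should be restructured along those lines.
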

    
    Set $\G := \langle \range(\dot{e}) \rangle \leq S_\infty$. As in \cite{constructing_mcg}, we split the long proof into propositions with shorter proofs. We first prove that $\G$ satisfies a strong form of maximality. The idea is based on the proof of Proposition 1.13 from \cite{constructing_mcg}.
        
    \begin{proposition}\label{prop_maximality}
        For every $g \in \injinf$ there is some $h \in \G$ such that
        \[
        \{n \in \omega \, | \, g(n) = h(n)\}
        \]
        is infinite.
    \end{proposition}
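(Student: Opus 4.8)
The plan is to follow the proof of \cite[Proposition~1.13]{constructing_mcg}, now driven by the dichotomy of Lemma~\ref{dichotomy} together with the fact that $B$ is superspaced (Claim~\ref{superspaced_claim}). Fix $g\in\injinf$, put $x:=\chi(g)$ so that $\chi^{\dagger}(x)=g$, and apply Lemma~\ref{dichotomy} to $g$; I would treat its two alternatives separately.

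Suppose alternative (2) holds, with good $d^0,d^1\in2^\omega$ such that $B_0(g,d^0,d^1)$ has no $<^g_0$-comparable pair and is either $<^g_1$-linearly ordered or has no $<^g_1$-comparable pair. Then I would take $h:=\dot e(x,d^0,d^1)\in\range(\dot e)\sq\G$. If $B_0(g,d^0,d^1)$ is infinite, Claim~\ref{inf_B} (with $f:=g$, $p^0:=d^0$, $p^1:=d^1$) makes $B(g,d^0,d^1)$ infinite, and for each $n\in B(g,d^0,d^1)$ the first clause in the definition of $\dot e$ applies --- goodness of $d^0 \res (n+1),d^1 \res (n+1)$ is inherited from $d^0,d^1$, and there is no $<^g_0$-comparable pair below $n$ in $B_0(g,d^0,d^1)$ --- so $h(n)=g(n)$. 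If instead $B_0(g,d^0,d^1)$ is finite, then since the set $\{\vartheta_g(\widehat{d^0}(n))\mid d^1(n)=1\}$ out of which $B_0(g,d^0,d^1)$ is carved is infinite (the sequences $d^1$ produced by Lemma~\ref{dichotomy} have infinitely many $1$'s), the set $\{m\mid g(m)=e(x,d^0,d^1)(m)\}$ is cofinite in an infinite set, hence infinite; thus $e(x,d^0,d^1)$ agrees with $g$ infinitely often, and since $\dot e(x,d^0,d^1)$ differs from $e(x,d^0,d^1)$ only on the now finite surgery set $\bigcup_{b\in B(g,d^0,d^1)}\{b,g(b),e(x,d^0,d^1)^{-1}(g(b))\}$, so does $h$. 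Either way $h\in\G$ is as claimed.

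Suppose alternative (1) holds: there is an infinite $I=\{m_0<m_1<\dots\}\sq D(g)$ linearly ordered by $<^g_0$, so $m_i<^g_0 m_j$ for $i<j$. The coherence built into $<^g_0$ turns the finitary words $\delta(g,m_i)\in W_{n(m_i)}$ into an inverse system under the reduction maps, and --- exactly as in \cite{constructing_mcg}, possibly after passing to an infinite subset of $I$ --- this determines a reduced word
\[
\vec y=(x_k,d^0_k,d^1_k)^{i_k}\cdots(x_0,d^0_0,d^1_0)^{i_0}\in\F(2^\omega\times2^\omega\times2^\omega)
\]
with $r^\omega_{n(m_i)}(\vec y)=\delta(g,m_i)$ for all large $i$, whence $e(\vec y)(m_i)=g(m_i)$ for all large $i$ and $\tilde I:=\{n\in D(g)\mid g(n)=e(\vec y)(n)\}$ is infinite. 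I would then set $h:=\dot e(x_k,d^0_k,d^1_k)^{i_k}\circ\cdots\circ\dot e(x_0,d^0_0,d^1_0)^{i_0}\in\G$ and look for infinitely many $m\in\tilde I$ whose path under $e(x_k,d^0_k,d^1_k)^{i_k}\circ\cdots\circ e(x_0,d^0_0,d^1_0)^{i_0}$ never meets a position where some $\dot e(x_j,d^0_j,d^1_j)^{i_j}$ disagrees with $e(x_j,d^0_j,d^1_j)^{i_j}$; for such $m$ one gets $h(m)=e(\vec y)(m)=g(m)$. For $j$ with $x_j\notin\range(\chi)$, or with $d^0_j$ or $d^1_j$ not good, only finitely many surgeries are ever performed by $\dot e(x_j,d^0_j,d^1_j)$ (in the first case because $\vartheta_{\chi^{\dagger}(x_j)}$ has finite domain, in the second because either $B_0(\chi^{\dagger}(x_j),d^0_j,d^1_j)$ is finite or the goodness requirement in the first clause of $\dot e$ fails from some point on), and since each $e(\cdot,\cdot,\cdot)$ respects the interval partition these finitely many bad positions meet only finitely many intervals and are avoided for cofinitely many $m$. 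For $j$ in the set $J$ of the superspacedness definition (i.e.\ $x_j\in\range(\chi)$, $\chi^{\dagger}(x_j)\neq g$, $d^0_j,d^1_j$ good), Claim~\ref{superspaced_claim} yields infinitely many $m\in\tilde I$ with $\chi^{\dagger}(x_j)[B(\chi^{\dagger}(x_j),d^0_j,d^1_j)]\cap I_{n(m)}=\emptyset$ for all such $j$; combined with Claim~\ref{ad} (so that $\range(\vartheta_g)$ and $\range(\vartheta_{\chi^{\dagger}(x_j)})$ occupy disjoint intervals from some point on) and the interval-respecting property of $e$, this forces the portion of the path lying in $I_{n(m)}$ to miss all the surgery positions of $\dot e(x_j,d^0_j,d^1_j)^{i_j}$. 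The remaining indices $j$, those with $\chi^{\dagger}(x_j)=g$ (necessarily $x_j=x$) and $d^0_j,d^1_j$ good --- the one case superspacedness does not cover --- I would handle by hand: $\tilde I$, each $B(g,d^0_j,d^1_j)$, and the associated surgery positions all live inside $\range(\vartheta_g)$ together with its $g$- and $e(x,d^0_j,d^1_j)$-shifts, which by $g$-spacedness and property (iii) of $\vartheta$ meet each interval in a bounded number of points; since $\tilde I\sq\range(\vartheta_g)$ meets each interval in at most one point, a counting argument lets one thin $\tilde I$ to an infinite set avoiding all these bad positions, completing the choice of $m$.

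The main obstacle is alternative (1): extracting the word $\vec y$ from the $<^g_0$-chain (the stabilisation of word lengths under reduction) has to be taken over from \cite{constructing_mcg}, and --- more substantially --- one must verify that for suitably chosen $m\in\tilde I$ the \emph{entire} $e$-path of $m$ through $\vec y$ steers clear of every surgery. Superspacedness (Claim~\ref{superspaced_claim}) is designed precisely for this, but only for the generators built from an $x_j$ with $\chi^{\dagger}(x_j)\neq g$; the generators built from $x=\chi(g)$ itself require a separate, hands-on spacedness argument, and tracking how the (boundedly many per interval) surgery positions interact with the path is where the bookkeeping is heaviest. Alternative (2), by contrast, is short once Claim~\ref{inf_B} is available.
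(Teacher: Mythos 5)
Your treatment of alternative (2) of Lemma \ref{dichotomy} is exactly the paper's (Claim \ref{inf_B} when $B_0(g,d^0,d^1)$ is infinite; the cofinite-agreement-plus-finitely-many-surgeries observation when it is finite), and in alternative (1) your handling of the letters with $x_j\notin\range(\chi)$, with non-good codes, or with $\chi^{\dagger}(x_j)\neq g$ (via superspacedness, i.e.\ Claim \ref{superspaced_claim}) also matches the paper's sets $J_1,J_2,J_3$. The gap is precisely where you yourself locate the heaviest bookkeeping: the letters with $x_j=\chi(g)$ and $d^0_j,d^1_j$ good. Your proposed ``counting argument'' --- thin $\tilde I$ so that the paths avoid the surgery positions of $\dot e(\chi(g),d^0_j,d^1_j)$ --- does not work in the sub-case where $B_0(g,d^0_j,d^1_j)$ has no $<^g_0$-comparable pair and $B(g,d^0_j,d^1_j)$ is infinite. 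Indeed, both $\tilde I\sq D(g)$ and $B(g,d^0_j,d^1_j)$ are subsets of $\range(\vartheta_g)$, and $\range(\vartheta_g)$ meets each interval $I_n$ in at most one point; hence whenever $B(g,d^0_j,d^1_j)$ meets $I_{n(m)}$ for some $m\in\tilde I$, the surgery point \emph{is} $m$ itself (with $g(m)$ and $e(\chi(g),d^0_j,d^1_j)^{-1}(g(m))$ the other affected points, sitting exactly where the path goes). Nothing rules out that $B(g,d^0_j,d^1_j)$ contains all but finitely many elements of $\tilde I$ --- the exclusion clause in the definition of $B_0$ only removes points where $g$ agrees with the \emph{single} generator $e(\chi(g),d^0_j,d^1_j)$, not with the full word --- so no thinning of $\tilde I$ can dodge these surgeries, and per-interval finiteness of the bad set is irrelevant since the path is confined to the same interval as $m$.

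The paper avoids this entirely by a short-circuit that your proposal is missing: at the start of case (1), if some letter has $g_j=g$, good $d^0_j,d^1_j$, no $<^g_0$-comparable pair in $B_0(g,d^0_j,d^1_j)$, and $B(g,d^0_j,d^1_j)$ infinite, then one does not need the word at all --- the first clause of the definition of $\dot e$ gives $\dot e(\chi(g),d^0_j,d^1_j)\res B(g,d^0_j,d^1_j)=g\res B(g,d^0_j,d^1_j)$, and this single generator is the desired $h$. Only when no such letter exists does the paper run the path argument, and then every letter with $g_j=g$ and good codes falls into $J_0$ (a $<^g_0$-comparable pair exists in $B_0$, so the first clause of $\dot e$ never fires beyond some point) or $J_2$ ($B$ finite), i.e.\ it performs only finitely many surgeries and is handled exactly like your easy cases. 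Adding this case distinction repairs your argument; without it, the step ``a counting argument lets one thin $\tilde I$'' is a genuine hole. (Minor remark: the paper also disposes first of the trivial case where $g$ has infinitely many fixed points, taking $h=\id_\omega$; you omit this, but it is not where the difficulty lies.)
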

        
        \begin{proof}
            If $g$ has infinitely many fixed points, then clearly $\id_\omega \in \G$ agrees with $g$ on infinitely many places. So let $m \in \omega$ be such that for every $n \geq m$ it holds that $g(n) \neq n$. Applying Lemma \ref{dichotomy} to $g$, we consider the following two cases:
            
                \noindent
                (1) We have some $I \in [D(g)]^{\infty}$ which is linearly ordered by $<^g_0$.
                By definition of $<^g_0$, there is some $(\vec{x}, \vec{d^0}, \vec{d^1}) \in \F(2^\omega \times 2^\omega \times 2^\omega)$, with
                \[
                \vec{x} = x_k^{i_k} \ldots x_0^{i_0} \quad \text{and} \quad \vec{d^j} = (d^j_k)^{i_k} \ldots (d^j_0)^{i_0}
                \]
                for $j \in 2$, so that
                \[
                e(\vec{x}, \vec{d^0}, \vec{d^1}) \res I = g \res I.
                \]
                For every $j \in k + 1$ set also $g_j := \chi^{\dagger}(x_j) \in \injfininf$. If there is some $j \in k+1$, for which $g_j = g$, $d^0_j, d^1_j$ are good, no two elements of $B_0(g, d^0_j, d^1_j)$ are $<^g_0$-comparable and $B(g, d^0_j, d^1_j)$ is infinite, it follows that
                \[
                    \dot{e}(g, d^0_j, d^1_j) \res B(g, d^0_j, d^1_j) =  g \res B(g, d^0_j, d^1_j),
                \]
                so we are done. Otherwise define the set $I' \supseteq I$ by
                \[
                I' := \{n \in D(g) \, | \, g(n) = e(\vec{x}, \vec{d^0}, \vec{d^1})(n)\}
                \]
                and the sets
                \begin{align*}
                &J_0 := \{ j \in k+1 \, | \, g_j = g \land d^0_j, d^1_j \text{ are good } \land (\exists n_0, n_1 \in B_0(g, d^0_j, d^1_j))\, n_0 <^g_0 n_1\}\\
                &J_1 := \{j \in k+1 \, | \, x_j \in \range(\chi) \land (d^0_j \text{ is not good} \lor d^1_j \text{ is not good})\}\\
                &J_2 := \{j \in k+1 \, | \, x_j \notin \range(\chi) \lor B(g_j, d^0_j, d^1_j) \text{ is finite} \}\\
                &J_3 := \{j \in k + 1 \, | \, x_j \in \range(\chi) \land g_j \neq g \land d^0_j, d^1_j \text{ are good}\}.
                \end{align*}
                Let $m_0 \in \omega$ be such that for every $j \in J_0$ it holds that there are some $n_0, n_1 \in B_0(g, d^0_j, d^1_j) \cap m_0$ for which $n_0 <^g_0 n_1$. Next, let $m_1 \in \omega$ be such that for every $j \in J_1$, it holds that either $d^0_j \res m_1$ is not good or $d^1_j \res m_1$ is not good. Finally, let $m_2 \in \omega$ be such that for every $j \in J_2$ it holds that either $\lh(g_j) < m_2$ or $B(g, d^0_j, d^1_j) \sq m_2$. Then define $m_3 \in \omega$ to be the smallest element of $\{\min I_n \, | \, n \in \omega\}$ such that for every element $q$ of 
                \[
                \bigcup \big\{\{l, g_j(l), g_j^{-1}(l)\} \, \big| \, l \leq \max\{m_0, m_1, m_2\} \land j \in J_0 \cup J_1 \cup J_2\big\}
                \]
                it holds that $q < m_3$. Note that some of the $g_j(l)$ and $g_j^{-1}(l)$ above might be undefined, in which case we ignore them.

                By the superspacedness property of $B$ it holds that there is some infinite $I'' \sq I' \setminus m_3$ so that for every $m \in I''$ it holds that
                \[
                (\forall j \in J_3)\, g_j[B(g_j, d^0_j, d^1_j)] \cap I_{n(m)} = \emptyset,
                \]
                where $n(m)$ is the unique such that $m \in I_{n(m)}$.

                If $j \in J_0 \cup J_1 \cup J_2$, then
                \[
                    \dot{e}(x_j, d^0_j, d^1_j) \res (\omega \setminus m_3) = e(x_j, d^0_j, d^1_j) \res (\omega \setminus m_3).
                \]
                Finally, if $j \in J_3$, the definition of $\dot{e}$ ensures that for every $m \in I''$ and every $m' \in I_{n(m)}$ it holds that
                \[
                \dot{e}(x_j, d^0_j, d^1_j)(m') = e(x_j, d^0_j, d^1_j)(m').
                \]
                Thus we have shown that
                \[
                \dot{e}(\vec{x}, \vec{d^0}, \vec{d^1}) \res I'' = e(\vec{x}, \vec{d^0}, \vec{d^1}) \res I'' = g \res I''.
                \]

                \noindent
                (2) There are good $d^0, d^1 \in 2^\omega$ such that no two elements of $B_0(g, d^0, d^1)$ are $<^g_0$-comparable and
                \begin{enumerate}[(a)]
                    \item $B_0(g, d^0, d^1)$ is linearly ordered by $<^g_1$; or
                    \item no two elements of $B_0(g, d^0, d^1)$ are comparable with respect to $<^g_1$.
                \end{enumerate}
                If $B_0(g, d^0, d^1)$ is infinite, then so is $B(g, d^0, d^1)$ by Claim \ref{inf_B}, so
                we get by definition that
                \[
                \Dot{e}(\chi(g), d^0, d^1) \res B(g, d^0, d^1) = g \res B(g, d^0, d^1).
                \]
                If on the other hand $B_0(g, d^0, d^1)$ is finite,
                the definition of $B_0(g, d^0, d^1)$ implies that
                \[
                (\exists^\infty m \in \omega)\, e(\chi(g), d^0, d^1)(m) = g(m).
                \]
                For every such $m$ it holds by the definition of $\dot{e}$ that
                \[
                \dot{e}(\chi(g), d^0, d^1)(m) = e(\chi(g), d^0, d^1)(m).
                \]
            
            Since in both cases (1) and (2) we got an element of $\G$, agreeing with $g$ on an infinite set, the proof is complete.
        \end{proof}

        Next we move to the proof of cofinitariness, which is a bit more involved. The proof described below closely follows the proof of Proposition 1.14 from \cite{constructing_mcg}.
        
        \begin{proposition}\label{G_cofinitary}
            $\G$ is cofinitary.
        \end{proposition}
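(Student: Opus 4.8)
The plan is to take an arbitrary reduced word $w = (\vec{x}, \vec{d^0}, \vec{d^1}) \in \F(2^\omega \times 2^\omega \times 2^\omega)$ of length $k+1 \geq 1$, with generators $(x_j, d^0_j, d^1_j)$ and signs $i_j$, and show that $\dot e(\vec x, \vec{d^0}, \vec{d^1}) := \dot e(x_k, d^0_k, d^1_k)^{i_k} \cdots \dot e(x_0, d^0_0, d^1_0)^{i_0}$ has only finitely many fixed points (which, combined with injectivity of $\dot e$ and the fact that $\G$ is generated by $\range(\dot e)$, gives that $\G$ is cofinitary and — together with Proposition~\ref{prop_maximality} — an mcg freely generated by $\range(\dot e)$). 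The strategy follows the two-regime analysis of \cite{constructing_mcg}: since $\dot e$ agrees with $e$ outside of the ``surgery zones'' (the sets $B(g_j,d^0_j,d^1_j)$ and their $e$-translates), and $\range(e)$ is already a cofinitary group by the Proposition in Subsection~1.1, the only way a fixed point can be created is through interaction with surgeries. First I would fix a tail bound $m^* \in \omega$ beyond which all the relevant data has stabilized: using Claim~\ref{ad}(2) choose $m^*$ large enough that for $j \neq j'$ with $g_j = \chi^\dagger(x_j), g_{j'} = \chi^\dagger(x_{j'})$ good infinite, the sets $D(g_j)$ and $D(g_{j'})$ (hence $B_0$ and $B$) meet no common interval $I_n$ past $m^*$; also require that all good/not-good and finite/infinite distinctions among the $(x_j,d^0_j,d^1_j)$ have been decided below $m^*$, and that (using spacedness of $B_0$) the surgery intervals for a single generator are isolated.

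The core of the argument is to track the path of a hypothetical fixed point $n \geq m^*$ through the factors $\dot e(x_0,\cdot,\cdot)^{i_0}, \dots, \dot e(x_k,\cdot,\cdot)^{i_k}$. At each step the path either moves by a genuine $e$-step (when no surgery is triggered) or by a surgery move as depicted in Figure~\ref{fig_surgery}, and since $e$ respects the interval partition and each surgery (by property~(iii) of $\vartheta_g$ and the definition of $<^g_0$) only moves an element to an interval of index $\geq$ its own, one obtains monotonicity of the interval indices along the path. The key point is that a surgery at generator $j$ can only fire when the relevant element lies in $B(g_j, d^0_j, d^1_j) \cup e(\cdots)[B(g_j,\cdots)] \cup e(\cdots)^{-1}[e(\cdots) \circ g_j][B(g_j,\cdots)]$; by mutual almost disjointness (Claim~\ref{ad}) at most one generator index $j$ can ``see'' a given interval past $m^*$, so in any single interval at most one surgery can occur along the path. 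I would then argue that if the path of $n$ returns to $n$, one can compare the composed action to the action of $e(\vec x, \vec{d^0}, \vec{d^1})$: either no surgery fired (and then $n$ is a fixed point of a nontrivial element of the cofinitary group $\range(e)$, so $n$ is bounded), or a surgery fired, in which case the net effect on the path — reading off the figure — differs from the pure $e$-path by ``inserting'' a $g_j^{\pm 1}$ or $g_j^{\pm 2}$ detour at an isolated interval, and this perturbation cannot bring the path back to its start unless one is again essentially inside a fixed point of a nontrivial word over $\range(e)$, up to boundedly many exceptions. This reduces cofinitariness of $\dot e(\vec x, \vec{d^0}, \vec{d^1})$ to cofinitariness of finitely many associated elements of $\langle \range(e) \rangle$, which we already have.

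The main obstacle I anticipate is the bookkeeping in the surgery case: one must carefully verify, using the mutual exclusivity of the four cases in the definition of $\dot e$ (Fact~\ref{f.spaced.used1}), the $<^g_0$-incomparability clause in cases~1--3, and the spacedness of $B_0$, that along the path of a candidate fixed point no two surgeries ``chain'' within overlapping intervals in a way that could conspire to produce a new periodic point — and, dually, that a surgery cannot silently undo itself. Handling the case $x_j \notin \range(\chi)$ (so $g_j = \chi^\dagger(x_j) \in \injfin$ and only finitely many surgeries ever occur for that generator) is routine and can be absorbed into $m^*$. I expect the write-up to largely mirror the proof of Proposition~1.14 in \cite{constructing_mcg}, with the extra coding components $d^0_j, d^1_j$ changing nothing essential because $e(\cdot,\cdot,\cdot)$ still respects the interval partition and all the new objects ($B$, $T$, $\psi$) are only used to \emph{shrink} the surgery zones, never to enlarge them.
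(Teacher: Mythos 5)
Your overall strategy (compare $\dot e$-words with $e$-words, localize surgeries via Claim~\ref{ad}, and trace the path of a candidate fixed point as in Figure~\ref{fig_surgery}) is the same general framework as the paper's proof, but the decisive step is missing, and the reduction you propose does not go through as stated. When you unfold the surgeries along the path, the resulting word is a word in the $e(x_j,d^0_j,d^1_j)^{\pm1}$ \emph{and} in the permutations $g_j^{\pm1}=\chi^\dagger(x_j)^{\pm1}$, and the $g_j$ are arbitrary elements of $\injinf$, not members of $\langle\range(e)\rangle$. So the case ``a surgery fired'' cannot be reduced to ``cofinitariness of finitely many associated elements of $\langle\range(e)\rangle$''. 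The genuinely hard case is when, after passing (by pigeonhole) to an infinite set of fixed points on which the unfolded word is constant and cancelling, exactly one occurrence of some $g_j^{\pm1}$ survives. Then infinitude of fixed points only tells you that $g_j$ agrees with a fixed word over $\range(e)$ on an infinite subset of $B(g_j,d^0_j,d^1_j)$ --- which is not absurd in itself; catching arbitrary $g\in\injinf$ by $e$-words is exactly what Proposition~\ref{prop_maximality} exploits. The paper derives the contradiction from the definition of $\dot e$ and of $<^{g_j}_0$: the surgeries at those points were only permitted under the first-case condition that no two elements of $B_0(g_j,d^0_j,d^1_j)$ below the current point are $<^{g_j}_0$-comparable, whereas agreement with one fixed $e$-word on infinitely many points of $D(g_j)$ forces those points to form a $<^{g_j}_0$-increasing chain (via $\delta$ and the restriction homomorphisms $r^{m'}_m$). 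This tension, prepared by Lemma~\ref{dichotomy} and built into the definition of $\dot e$, is the heart of the cofinitariness proof and is absent from your sketch; your ``the perturbation cannot bring the path back to its start'' is precisely the claim that needs this argument.

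Two further points. First, your ``monotonicity of interval indices along the path'' is not literally true (the inverse-surgery moves, cases 2 and 3 of $\dot e$, send points to lower-indexed intervals); what the paper actually proves (Subclaim~\ref{subclaim_lowest_interval}) is that the path of a fixed point is confined to the lowest interval it meets, using property (iii) of $\vartheta_g$, the choice of the tail segment $F$, and the fact that $B$ meets each interval in at most one point. Second, even after one knows that the unfolded word reduces to the empty word over the abstract alphabet $\{g_j^{\pm1}, e(x_j,d^0_j,d^1_j)^{\pm1}\}$, one still has to argue (as in Claim~\ref{empty_word}) that this is incompatible with the original word $w$ being reduced and nonempty --- a case analysis of how subwords like $(g^*_k)^{-1}g^*_k$ can arise from the substitution process; this step, which also yields that $\G$ is freely generated by $\range(\dot e)$, is not addressed in your proposal.
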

        
        \begin{proof}
            Let
            \[
            c = (c_l)^{i_l} \circ \cdots \circ (c_0)^{i_0},
            \]
            for $\{ c_j \}_{j \in l + 1} \sq \range(\dot{e})$ and $\{i_j\}_{j \in l + 1} \sq \{-1, 1\}$, be some reduced word in $\range(\dot{e})$, which has infinitely many fixed points. For $j \in l + 1$ let $x_j \in 2^\omega$ and $d^0_j, d^1_j \in 2^\omega$ be such that $\dot{e}(x_j, d^0_j, d^1_j) = c_j$, and put $g_j := \chi^{\dagger}(x_j) \in \injfininf$. Note that
            \[
            w:= (x_l, d^0_l, d^1_l)^{i_l} \cdots (x_0, d^0_0, d^1_0)^{i_0}
            \]
            is a reduced word in $\F(2^\omega \times 2^\omega \times 2^\omega)$.
            
            Let $F \sq \fix(c)$ be a tail segment, for which it holds that for every $m \in F$ and every $m'$ in the path of $m$ under $c$, $m'$ lies in the same interval $I_k$ with at most one $B(g_j, d^0_j, d^1_j)$, for which $d^0_j \res (\min I_{k+1}), d^1_j \res (\min I_{k+1})$ are good. This is possible by Claim \ref{ad}.
        
            For every $m \in F$ there is some $l(m) \in \omega$, so that
            \[
            c(m) = (a^m_{l(m)}  \circ \cdots \circ a^m_0)(m),
            \]
            where for $k \in l(m) + 1$, each $a^m_k$ is either $(g^m_k)^{i^m_k}$ or $e(x^m_k, (d^m_k)^0, (d^m_k)^1)^{i^m_k}$, where $i^m_k \in \{-1, 1\}$ and $g^m_k = g_j$ or $(x^m_k, (d^m_k)^0, (d^m_k)^1) = (x_j, d^0_j, d^1_j)$ for some $j \in l + 1$. This holds by unfolding the definition of $\dot{e}$ (and is left to the reader). Write also
            \[
            w^m := a^m_{l(m)} \cdots  a^m_0.
            \]
            Note that $l(m) \leq 2 l + 2$ by definition of $\dot{e}$. We can write $F$ as a finite union of sets, on each of which $w^{(\cdot)}$ is constant. Let $F^*$ be one of these sets, which is infinite. We replace every superscript $m$ with $\ast$, so that we have $l^* = l(m)$, $w^* = w^m$ and $a^*_k = a^m_k$, where $a^*_k$ is now either $(g^*_k)^{i^*_k}$ or $e(x^*_k, (d^*_k)^0, (d^*_k)^1 )^{i^*_k}$, for all $k \in l^* + 1$.

            \begin{claim}\label{w_reduces_to_empty}
                The word $w^*$ reduces to the empty word in $\F(\{(a^*_k)^{i^*_k} \, | \, k \in l^* + 1 \})$.
            \end{claim}

            We consider elements $(a^*_k)^{i^*_k}$, which are either $g^*_k$ or $e(x^*_k, (d^*_k)^0, (d^*_k)^1)$, as abstract generators in the above statement (we used $(a^*_k)^{i^*_k}$ and not $a^*_k$, so that the abstract generators $g^*_k$ or $e(x^*_k, (d^*_k)^0, (d^*_k)^1)$ are without powers).
            
            \begin{cproof}[Proof of Claim.]
                Suppose for contradiction that when seen as an abstract element of $\F(\{(a^*_k)^{i^*_k} \, | \, k \in l^* + 1 \})$, the word $w^*$ reduces to $v \neq \emptyset$. Then there is some $r \in \omega$ and a sequence $k(0), k(1), \ldots, k(r)$, for which
                \[
                v = a^*_{k(r)} \cdots a^*_{k(0)}.
                \]
                Define for $m \in F^*$ the sequence $(m(u))_{u = 0}^r$ inductively by $m(0) := m$, and for $u < r$ by
                \[
                m(u + 1) := a^*_{k(u)}(m(u)).
                \]
                We can assume without loss of generality that for any proper subword
                \[
                a^*_{k(r_1)} \cdots a^*_{k(r_0)}
                \]
                of $v$, where $0 \leq r_0 < r_1 \leq r$, it holds that $m(r_0) \neq m(r_1)$. Indeed, if this is not already the case, we use the Pigeonhole principle to find an infinite subset $F'$ of $F^*$ and a subword
                \[v' := a^*_{k(r_1)} \cdots a^*_{k(r_0)}
                \]
                of $w$, so that for every $m \in F'$ it holds that $m(r_0) = m(r_1)$ and so that for any proper subword
                \[
                v'' := a^*_{k(r_3)} \cdots a^*_{k(r_2)}
                \]
                of $v'$, with $r_0 \leq r_2 < r_3 \leq r_1$ it holds that $m(r_2) \neq m(r_3)$.
                
                Define
                \[
                n(m) := \min \{n \in \omega \, | \, (\exists u \in r + 1)\, m(u) \in I_n\},
                \]
                i.e., $n(m)$ is the least index of an interval which we pass through with $(m(u))_{u = 0}^r$. Note that this is slightly different from how we previously defined $n(m)$.
                
                \begin{subclaim}\label{subclaim_lowest_interval}
                    For all $u \in r + 1$ it holds that $m(u) \in I_{n(m)}$.
                \end{subclaim}
                
                \begin{cproof}[Proof of Subclaim.]
                    Suppose not. Then at least one of the following happens:
                    \begin{enumerate}
                        \item there is $0 < u_0 < r$ so that $m(u_0 - 1) \notin I_{n(m)}$ and $m(u_0) \in I_{n(m)}$; and $u_0 < u_1 < r$ so that $m(u_1) \in I_{n(m)}$ and $m(u_1 + 1) \notin I_{n(m)}$; or
                        \item there is $0 < u_0 < r$ so that $m(u_0 - 1) \in I_{n(m)}$ and $m(u_0) \notin I_{n(m)}$; and $u_0 < u_1 < r$ so that $m(u_1) \notin I_{n(m)}$ and $m(u_1 + 1) \in I_{n(m)}$.
                    \end{enumerate}

                    We can assume without loss of generality that case (1) happens. Note that it must hold that $a^*_{k(u_0-1)}$ is equal to $(g^*_j)^{-1}$ for some $j \in l + 1$ and $a^*_{k(u_1)}$ is equal to $g^*_j$. This is by property (iii) of $\vartheta_g$, the definition of $\Dot{e}$, the fact that $e$ stays inside the intervals and the definition of $F$. But
                    since $B(g^*_j, (d^*_j)^0, (d^*_j)^1)$ intersects $I_{n(m)}$ in exactly one point, it must be that $m(u_0 - 1) = m(u_1 + 1)$. This contradicts our assumption on $v$.
                \end{cproof}

                \begin{subclaim}
                    For at most one $u \in r + 1$ is it the case that $a^*_{k(u)}$ equals $g^*_j$ or $(g^*_j)^{-1}$ for some $j \in l + 1$.
                \end{subclaim}
                
                \begin{cproof}[Proof of Subclaim.]
                    This holds by the definition of $F$ (which used Claim \ref{ad}), by the fact that $B(g^*_j, (d^*_j)^0, (d^*_j)^1)$ intersects each $I_n$ in at most one point, the definition of $\Dot{e}$ and by the assumption that $v$ has no subwords with infinitely many fixed points on the paths starting with elements of $F^*$. We leave it to the reader to provide the details.
                \end{cproof}
                
                \begin{subclaim}
                    It cannot be the case that any $a^*_{k(u)}$ equals $g^*_j$ or $(g^*_j)^{-1}$ for any $j \in l + 1$.
                \end{subclaim}
                
                \begin{cproof}[Proof of Subclaim.]
                    By the previous subclaim we know that there is at most one $a^*_{k(u)}$ which equals $g^*_j$ or $(g^*_j)^{-1}$. For contradiction suppose that $v$ is of the form (the argument with $(g^*_j)^{-1}$ is analogous):
                    \begin{multline*}
                    e\big(x^*_{k(r)}, (d^*_{k(r)})^0, (d^*_{k(r)})^1\big)^{i^*_{k(r)}} \cdots \\
                    \cdots e\big(x^*_{k(\Bar{r} + 1)}, (d^*_{k(\Bar{r} + 1)})^0, (d^*_{k(\Bar{r} + 1)})^1\big)^{i^*_{k(\bar{r} + 1)}}\, g^*_j \,
                    e\big(x^*_{k(\Bar{r} - 1)}, (d^*_{k(\Bar{r} - 1)})^0, (d^*_{k(\Bar{r} - 1)})^1\big)^{i^*_{k(\bar{r} - 1)}} \cdots\\
                    \cdots e\big(x^*_{k(0)}, (d^*_{k(0)})^0, (d^*_{k(0)})^1\big)^{i^*_{k(0)}}
                    \end{multline*}
                    for some $\bar{r} \in r + 1$. But then it holds for infinitely many $n \in \omega$ that
                    \begin{multline}\label{g_caught}
                       g^*_j(n) =  \Big(e\big(x^*_{k(\Bar{r} + 1)}, (d^*_{k(\Bar{r} + 1)})^0, (d^*_{k(\Bar{r} + 1)})^1\big)^{-i^*_{k(\bar{r} + 1)}} \cdots\\
                       \cdots e\big(x^*_{k(r)}, (d^*_{k(r)})^0, (d^*_{k(r)})^1\big)^{-i^*_{k(r)}} \, e\big(x^*_{k(0)}, (d^*_{k(0)})^0, (d^*_{k(0)})^1\big)^{- i^*_{k(0)}}
                    \cdots \\
                   \cdots e\big(x^*_{k(\Bar{r} - 1)}, (d^*_{k(\Bar{r} - 1)})^0, (d^*_{k(\Bar{r} - 1)})^1 \big)^{-i^*_{k(\bar{r} - 1)}}\Big)(n).  
                    \end{multline}
                    Let $I$ be the infinite set of all such $n \in \omega$. Since also
                    \[
                    g^*_j \res I = \Dot{e}(\chi(g^*_j), (d^*_j)^0, (d^*_j)^1) \res I,
                    \]
                    (this is the only way in which $a^*_{k(u)}$ was able to be $g^*_j$), the definition of $\Dot{e}$ (see the first case) implies that elements of $I$ are pairwise $<^{g^*_j}_0$-incomparable. On the other hand, equation (\ref{g_caught}) implies that $I$ forms a $<^{g^*_j}_0$-increasing chain. This is a contradiction.
                \end{cproof}
                
                \begin{subclaim}
                    It is the case that $v = \emptyset$.
                \end{subclaim}
                
                \begin{cproof}[Proof of Subclaim.]
                    By the previous subclaim it holds that
                    \[
                    v = e\big(x^*_{k(r)}, (d^*_{k(r)})^0, (d^*_{k(r)})^1\big)^{i^*_{k(r)}} \cdots e\big(x^*_{k(0)}, (d^*_{k(0)})^0, (d^*_{k(0)})^1\big)^{i^*_{k(0)}}.
                    \]
                    But since $v$ has infinitely many fixed points and the range of $e$ generates a cofinitary subgroup it must hold that $v = \emptyset$.
                \end{cproof}
                
                This is a contradiction with our assumption that $v \neq \emptyset$, and with this the proof of Claim \ref{w_reduces_to_empty} is complete.
            \end{cproof}
            
            \begin{claim}\label{empty_word}
                It must be the case that $w^*$ is the empty word.
            \end{claim}
            
            \begin{cproof}[Proof of Claim.]
                Suppose for contradiction that $w^*$ is not the empty word. By considering all possible cases we will conclude that this is a contradiction. Suppose first that $w^*$ contains a subword of the form
                \[
                (g^*_k)^{-1} \  g^*_k
                \]
                for some $k \in l^*  +1 $. By definition of $\dot{e}$, this subword can only arise from a subword of $w$ (recall that $w$ is reduced), in two ways. The first option is that $(g^*_k)^{-1} \  g^*_k$ arose from
                \[
                (x^*_k, (d^*_k)^0, (d^*_k)^1) \ (x^*_k, (d^*_k)^0, (d^*_k)^1),
                \]
                and where the corresponding right $\dot{e}(x^*_k, (d^*_k)^0, (d^*_k)^1)$ was substituted by $g^*_k$ and the left $\dot{e}(x^*_k, (d^*_k)^0, (d^*_k)^1)$ by $e(x^*_k, (d^*_k)^0, (d^*_k)^1) \ (g^*_k)^{-1}$, so that we actually have the following subword of $w^*$
                \[
                e(x^*_k, (d^*_k)^0, (d^*_k)^1) \ (g^*_k)^{-1} \ g^*_k.
                \]
                But as we have observed that $w^*$ reduces to $\emptyset$, the occurrence of $e(x^*_k, (d^*_k)^0, (d^*_k)^1)$ must cancel out, so actually, there must be a subword of $w^*$ of the form
                \[
                e(x^*_k, (d^*_k)^0, (d^*_k)^1)^{-1}\ e(x^*_k, (d^*_k)^0, (d^*_k)^1) \ (g^*_k)^{-1} \ g^*_k
                \]
                or of the form
                \[
                e(x^*_k, (d^*_k)^0, (d^*_k)^1) \ (g^*_k)^{-1} \ g^*_k \ e(x^*_k, (d^*_k)^0, (d^*_k)^1)^{-1}.
                \]
                In both cases the occurrence of $e(x^*_k, (d^*_k)^0, (d^*_k)^1)^{-1}$ arose by substitution of
                \[
                    (x^*_k, (d^*_k)^0, (d^*_k)^1)^{-1}.
                \]
                This is a contradiction, as this means that $w$ contains a subword of the form
                \[
                (x^*_k, (d^*_k)^0, (d^*_k)^1)^{-1}\ (x^*_k, (d^*_k)^0, (d^*_k)^1)\  (x^*_k, (d^*_k)^0, (d^*_k)^1)
                \]
                or of the form
                \[
                (x^*_k, (d^*_k)^0, (d^*_k)^1)\ (x^*_k, (d^*_k)^0, (d^*_k)^1)\ (x^*_k, (d^*_k)^0, (d^*_k)^1)^{-1},
                \]
                implying that $w$ is not reduced. The second option is that $(g^*_k)^{-1} \  g^*_k$ arose from
                \[
                (x^*_k, (d^*_k)^0, (d^*_k)^1)^{-1} \ (x^*_k, (d^*_k)^0, (d^*_k)^1)^{-1},
                \]
                and where the corresponding left $\dot{e}(x^*_k, (d^*_k)^0, (d^*_k)^1)^{-1}$ was substituted by $(g^*_k)^{-1}$ and the right $\dot{e}(x^*_k, (d^*_k)^0, (d^*_k)^1)^{-1}$ by $g^*_k \ e(x^*_k, (d^*_k)^0, (d^*_k)^1)^{-1}$, so that we actually have the following subword of $w^*$
                \[
                (g^*_k)^{-1} \ g^*_k \  e(x^*_k, (d^*_k)^0, (d^*_k)^1)^{-1}.
                \]
                A contradiction is then established in a similar manner as in the first case.
                
                Next, suppose that $w^*$ contains a subword of the form
                \[
                e(x^*_k, (d^*_k)^0, (d^*_k)^1)^{-1} \ e(x^*_k, (d^*_k)^0, (d^*_k)^1).
                \]
                The only way this subword can appear in $w^*$ is (by applying the definition of $\dot{e}$) that $(x^*_k, (d^*_k)^0, (d^*_k)^1)^{-1}\ (x^*_k, (d^*_k)^0, (d^*_k)^1)$ was a subword of $w$. This is clearly a contradiction.
                
                By a similar argument, one can easily prove that $w^*$ cannot have subwords of the form $g^*_k \ (g^*_k)^{-1}$ or $e(x^*_k, (d^*_k)^0, (d^*_k)^1) \ e(x^*_k, (d^*_k)^0, (d^*_k)^1)^{-1}$, hence the proof is complete.
            \end{cproof}
            
            Claim \ref{empty_word} implies that $w = \emptyset$ and so $c = \id_\omega$. Since $c$ was an arbitrary element of $\G$ with infinitely many fixed points, the proof of Proposition \ref{G_cofinitary} is complete.
        \end{proof}

        Before we prove the next proposition, we introduce the following useful notion. For $\bar{f} \in \injfin$, we say that $\bar{f}$ is of \emph{interval length} $k$, if $\lh(\bar{f}) = \sum_{m \leq k} |I_m|$. For $\bar{f}$ of interval length $k$, we say that $(\bar{x}, \bar{d^0}, \bar{d^1}) \in 2^k \times 2^k \times 2^k$
        is \emph{recovered from} $\bar{f}$, if for every $m \leq k$ it holds that
        \[
            |\{n \in I_m \, | \, e(\bar{x}, \bar{d^0}, \bar{d^1})(n) \neq \bar{f}(n) \}| \leq 3.
        \]
        Next, for $\bar{x} \in 2^{<\omega}$
        and $\bar{g} \in (\omega)^{\leq \lh(\bar{x})}$, we say that $\bar{g}$ is $\bar{x}$-\emph{compatible}, if $\chi^{\dagger}(\bar{x}) \sqsubseteq \bar{g}$ and
        \begin{enumerate}[(I)]
            \item if $\bar{x}$ is of the form
            \[
            (\ldots, 1, \underbrace{0, 0, \ldots, 0}_n, 1, \ldots, 1, \underbrace{0, 0, \ldots, 0}_n, 1, \ldots)
            \]
            for some $n \in \omega$ (and is thus not in $\range(\chi)$ as it violates injectivity), then
            \[
            \bar{g} = \chi^{\dagger}(\bar{x});
            \]
            \item\label{i.copmatible.geq} if there is some $n \in \omega$ such that
            \[
            \bar{x} = \chi(\chi^{\dagger}(\bar{x})) ^\smallfrown (\underbrace{0, \ldots, 0}_n),
            \]
            i.e., $\bar{x}$ is not in $\range(\chi)$ because of the second reason described below the definition of $\chi^{\dagger}$, namely that it does not end with a 1, then in case $\lh(\bar{g}) > \lh(\chi^{\dagger}(\bar{x}))$, it must hold that
            \[
            \bar{g}(\lh(\chi^{\dagger}(\bar{x}))) \geq n;
            \]
            \item otherwise (when $\bar{x} \in \range(\chi)$) we impose no further requirements.
        \end{enumerate}

        Finally, for $\bar{f}$ of interval length $k$ and $(\bar{x}, \bar{d^0}, \bar{d^1})$ recovered from $\bar{f}$ we define when $\bar{g} \in (\omega)^{\leq\lh(\bar{x})}$ is $(\bar{f}, \bar{x}, \bar{d^0}, \bar{d^1})$-\emph{matching}. For
        $n < k$
        define temporarily
        \begin{align*}
            \varphi(n)\ :\Longleftrightarrow  \ \,  &\text{no two } n_0, n_1 \in B_0(\bar{g} \res (n + 1), \bar{d^0} \res (n + 1), \bar{d^1} \res (n + 1))\cap n\\
            &\text{are comparable w.r.t.} <^{\bar{g}}_0, \bar{d^0} \res (n + 1), \bar{d^1} \res (n + 1) \text{ are good and}\\
                                &n \in  B(\bar{g} \res (n + 1), \bar{d^0} \res (n + 1), \bar{d^1} \res (n + 1)).
            \end{align*}
        Then $\bar{g} \in (\omega)^{\leq\lh(\bar{x})}$ is $(\bar{f}, \bar{x}, \bar{d^0}, \bar{d^1})$-\emph{matching}, if $\bar{g}$ is $\bar{x}$-compatible and for every
        $n < k$
        it holds that
        \begin{enumerate}[(i)]
                \item if $\varphi(n)$, then
                $n \in \dom(\bar{g})$ and $\bar{f}(n) = \bar{g}(n)$;
                \item if $k := \bar{g}^{-1}(n)$ is defined, $k \in \dom(\bar{f})$ and $\varphi(k)$, then $\bar{f}(n) = e(\bar{x}, \bar{d^0}, \bar{d^1})(k)$;
                \item if $k := (\bar{g}^{-1} \circ e(\bar{x}, \bar{d^0}, \bar{d^1}))(n)$ is defined, $k  \in \dom(\bar{f})$ and $\varphi(k)$, then\\
                $\bar{f}(n) = e(\bar{x}, \bar{d^0}, \bar{d^1})^2(n)$;
                \item
                if none of the ``if'' parts of the ``if \ldots then'' statements from (i), (ii) and (iii) are true, then $\bar{f}(n) = e(\bar{x}, \bar{d^0}, \bar{d^1})(n)$.
            \end{enumerate}
            Of course, the idea behind the definition of $(\bar{f}, \bar{x}, \bar{d^0}, \bar{d^1})$-matching is that it captures the requirements imposed by the definition of $\dot{e}$. This will be made more precise in Proposition \ref{sigma_range}.

        With this we are ready to establish that the set of generators of the constructed group is definable.
        
        \begin{proposition}\label{sigma_range}
            $\range(\dot{e})$ is a $\Pi^0_1$ subset of $\baire$.
        \end{proposition}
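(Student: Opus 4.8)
The plan is to convert the definition of $\dot{e}$ into a local, finitary test for membership in $\range(\dot{e})$, using exactly the notions of being \emph{recovered from}, \emph{$\bar x$-compatible}, and \emph{$(\bar f, \bar x, \bar{d^0}, \bar{d^1})$-matching} introduced above. For $\pi \in \baire$ and $k \in \omega$, let $\bar f_k := \pi \res \sum_{m \leq k} |I_m|$, a sequence of interval length $k$, and let $C_k(\pi)$ be the statement that there are $\bar x, \bar{d^0}, \bar{d^1} \in 2^k$ and $\bar g \in (\omega)^{\leq k}$ such that $(\bar x, \bar{d^0}, \bar{d^1})$ is recovered from $\bar f_k$ and $\bar g$ is $(\bar f_k, \bar x, \bar{d^0}, \bar{d^1})$-matching. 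The proof then amounts to (a) the equivalence $\pi \in \range(\dot{e}) \Longleftrightarrow \forall k\, C_k(\pi)$, and (b) the observation that each $C_k$ is a clopen condition on $\pi$, decidable from a finite amount of $\pi$, so that $\range(\dot{e}) = \{\pi \in \baire \mid \forall k\, C_k(\pi)\}$ is $\Pi^0_1$.

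For the forward implication of (a), if $\pi = \dot{e}(x, c^0, c^1)$ and $g := \chi^{\dagger}(x)$, then $(x \res k, c^0 \res k, c^1 \res k, g \res k)$ witnesses $C_k(\pi)$: since $\dot{e}(x, c^0, c^1)$ agrees with $e(x, c^0, c^1)$ away from the (spaced, cf.\ Fact~\ref{f.spaced.used1}) surgery points, $(x \res k, c^0 \res k, c^1 \res k)$ is recovered from $\bar f_k$, while $g \res k$ being matching is precisely the four-case definition of $\dot{e}$, and the compatibility clauses (I)--(III) hold because $\chi^{\dagger}(x \res k) \sqsubseteq g = \chi^{\dagger}(x)$. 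For the converse, assume $C_k(\pi)$ for all $k$. The witnessing tuples form, under coordinatewise restriction, a tree which is infinite and --- provided $C_k$ is formulated correctly (see (b) below) --- finitely branching, so K\"onig's lemma yields an infinite branch giving $x, c^0, c^1 \in 2^\omega$ and $g \in (\omega)^{\leq \omega}$ with $\chi^{\dagger}(x) \sqsubseteq g$. Here the compatibility clauses force $g = \chi^{\dagger}(x)$: clause (I) pins $g$ down once $x$ shows a repeated zero-block, clause (II) prevents $g$ from properly extending $\chi^{\dagger}(x)$ when $x$ is eventually constantly $0$ (such an extension would force an entry of $g$ to exceed every natural number), and if $x \in \range(\chi)$ then $\chi^{\dagger}(x)$ is already total. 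Unwinding the matching conditions along the branch then shows $\pi(n) = \dot{e}(x, c^0, c^1)(n)$ for every $n$, so $\pi \in \range(\dot{e})$.

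For (b), the quantifiers over $\bar x, \bar{d^0}, \bar{d^1} \in 2^k$ are finite, and ``recovered from'' and ``matching'' are decidable from finite data about $\pi$ using that $e$, $\chi^{\dagger}$, $D$, $B_0$, the orders $<^{(\cdot)}_0$ and $<^{(\cdot)}_1$, the tree $T$ and the assignment $\psi$, and hence $B$, are all $\Delta^0_1$. The delicate point is that the quantifier over $\bar g$ must also be made finite. One checks that if $m \in B(\bar g, \bar{d^0}, \bar{d^1})$ lies in one of the first $k+1$ intervals, then $\bar g$ is forced to be defined far past $m$, and since $\vartheta_{\bar g}$ maps its argument $i$ into $I_{F(\bar g \res (i+1), \xi_{\bar g}(i))}$ and $F = 2^{\#(\cdot)} \cdot 3^{(\cdot)}$, the relevant initial segments $\bar g \res (i+1)$ have bounded $\#$-value, hence lie in a finite set depending only on $k$; entries of $\bar g$ not arising in this way influence neither the recovered-from nor the matching condition, so $C_k$ may be taken to range over $\bar g$ drawn from an explicit finite set. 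This makes $C_k$ uniformly $\Delta^0_1(\pi)$ and completes the proof.

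The main obstacle is bookkeeping rather than a single conceptual step: pinning down the precise form of $C_k$ so that the witness tree is genuinely finitely branching and its branches reconstruct members of $\range(\dot{e})$ and nothing more, which requires invoking the compatibility clauses (I)--(III) and the growth of $F$, $\vartheta$ and $(I_n)_{n \in \omega}$ at exactly the right places --- in the same spirit as the definability argument in \cite{constructing_mcg}.
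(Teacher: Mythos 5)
Your proposal follows essentially the same route as the paper's proof: membership in $\range(\dot e)$ is characterized as the conjunction over $k$ of a finitarily decidable condition asserting the existence of a tuple $(\bar x,\bar{d^0},\bar{d^1})$ recovered from $f\res\sum_{m\leq k}|I_m|$ together with a matching $\bar g$, with the same treatment of both inclusions and of the boundedness of the witness $\bar g$. The only cosmetic difference is that you extract the limit object via K\"onig's lemma on a finitely branching witness tree, whereas the paper observes directly that the recovered tuples cohere ($\bar x_k \sqsubset \bar x_{k'}$, etc.) and that the witnesses $\bar g_k$ agree with $\chi^{\dagger}(x)$ wherever their values are actually used.
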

        
        \begin{proof}
            We will define a $\Delta^0_1$ set $U \sq \injfin$, so that
            \begin{equation}\label{in_range}
            \range(\Dot{e}) = \Big\{ f \in \baire \, \Big\vert \, (\forall k \in \omega)\, f \res \big(\sum_{m \leq k} | I_m|\big) \in U \Big\}.
            \end{equation}
            Then clearly $\range(\dot{e})$ will be $\Pi^0_1$.
            For $\bar{f} \in \injfin$ of interval length $k$, we let
            \begin{align*}
                \bar{f} \in U\ :\Longleftrightarrow \ \, &(\exists (\bar{x}, \bar{d^0}, \bar{d^1}) \in
                2^k \times 2^k \times 2^k)\\
                &(\bar{x}, \bar{d^0}, \bar{d^1}) \text{ is recovered from } \bar{f}\, \land\\
                &(\exists \bar{g} \in (\omega)^{\leq \lh(\bar{x})})\, \bar{g} \text{ is } (\bar{f}, \bar{x}, \bar{d^0}, \bar{d^1})\text{-matching}.
            \end{align*}
            Note that even though we are using two existential quantifiers in the above definition, $U$ is $\Delta^0_1$, as we can verify whether there are witnesses to the existential quantifiers in finitely many steps. We leave the details to the reader.
            
            We now verify that (\ref{in_range}) holds. For $k \in \omega$, let $l(k) := \sum_{m \leq k} |I_m|$. Suppose first that $f \in \range(\dot{e})$. Then there are $(x, d^0, d^1) \in 2^\omega \times 2^\omega \times 2^\omega$ for which $f = \dot{e}(x, d^0, d^1)$. Clearly, for each $k$ it holds that $(x \res k, d^0 \res k, d^1 \res k)$ is recovered from $f \res l(k)$. Let also $g := \chi^{\dagger}(x)$. We consider the following two cases:
            
            (1) If $g \in \injinf$, then clearly each $g \res k$ is $(f\res l(k), x \res k, d^0 \res k, d^1 \res k)$-matching. In particular, for every $k \in \omega$ it holds that $f \res l(k) \in U$.
            
            (2) If $g \in \injfin$, then for every $k$ with $k \leq \lh(g)$ let $g_k := g \res k$ and for every $k$ with $k > \lh(g)$ define $g_k := g$.  %
            Then each $g_k$ is clearly $(x \res k)$-compatible and $(f \res l(k), x \res k, d^0 \res k, d^1 \res k)$-matching. We have thus established that for every $k \in \omega$, $f \res l(k) \in U$.
            
            Conversely, assume that for $f \in \baire$ it holds that $(\forall k \in \omega)\, f \res l(k) \in U$. For each $k \in \omega$, let $(\bar{x}_k, \bar{d^0_k}, \bar{d^1_k})$ be recovered from $f \res l(k)$ and let $\bar{g}_k$ be $(f \res l(k), \bar{x}_k, \bar{d^0_k}, \bar{d^1_k})$-matching. Clearly, for every $k < k'$ it must hold that $\bar{x}_k \sqsubset \bar{x}_{k'}$ and $\bar{d^j_k} \sqsubset \bar{d^j_{k'}}$ for $j \in 2$. Let $x \in 2^\omega$ be unique such that $x \res k = \bar{x}_k$ and let $d^j$ be unique such that $d^j \res k = \bar{d^j_k}$. On the other hand, it is not necessarily the case that the $(\bar{g}_k)_k$ line up. Nevertheless, setting $g := \chi^\dagger(x)$, it holds that whenever $\Bar{g}_k(m)$ is used in cases (i) to (iv) in the definition of matching, we have that $m \in \dom(g)$ and $\bar{g}_k(m) = g(m)$. To see this, note that if $\Bar{g}_k(m)$ is used in cases (i) to (iv) in the definition of matching, then for every $k' > k$ it must hold that $\bar{g}_{k'}(m) = \bar{g}_k(m)$, so if $\bar{g}_k(m) \neq g(m)$, there would be some $k' > k$ for which $\bar{g}_{k'}$ is not $(f \res l(k'), \bar{x}_{k'}, \bar{d^0_{k'}}, \bar{d^1_{k'}})$-matching. Hence we may assume that $\Bar{g}_k = g \res k$ for every $k \in \omega$. The reader can routinely verify that indeed, the conditions imposed on $x, d^0, d^1$ and $g$ guarantee that $\dot{e}(x, d^0, d^1) = f$. Note that when $g \in \injfin$, the conditions (i) to (iv) from the definition of matching make sure that $f(n) = e(x, d^0, d^1)(n)$ for all $n \geq m$ for an appropriate $m$ (in the same way as in the definition of $\dot{e}$).
        \end{proof}
        
        \begin{proposition}
            The group $\G$ is $\Sigma^0_2$ and is freely generated by $\range(\dot{e})$.
        \end{proposition}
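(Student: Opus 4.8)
The plan is to extract both conclusions from the propositions already proved. For freeness, let $s_{L-1}^{\epsilon_{L-1}}\circ\cdots\circ s_0^{\epsilon_0}$ be a nonempty reduced word over $\range(\dot e)$; that is, $L\geq 1$, $s_j\in\range(\dot e)$, $\epsilon_j\in\{-1,1\}$ and $s_{j+1}^{\epsilon_{j+1}}\neq s_j^{-\epsilon_j}$ for all $j$. Choosing triples $t_j=(x_j,d^0_j,d^1_j)$ with $\dot e(t_j)=s_j$ and setting
\[
w:=(x_{L-1},d^0_{L-1},d^1_{L-1})^{\epsilon_{L-1}}\cdots(x_0,d^0_0,d^1_0)^{\epsilon_0}\in\F(2^\omega\times 2^\omega\times 2^\omega),
\]
any cancellation in $w$ at position $j$ would give $(x_{j+1},d^0_{j+1},d^1_{j+1})=(x_j,d^0_j,d^1_j)$ and $\epsilon_{j+1}=-\epsilon_j$, hence $s_{j+1}^{\epsilon_{j+1}}=s_j^{-\epsilon_j}$, which is excluded; thus $w$ is reduced, and nonempty since $L\geq 1$. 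If $s_{L-1}^{\epsilon_{L-1}}\circ\cdots\circ s_0^{\epsilon_0}$ were $\id_\omega$, it would have infinitely many fixed points, and then the proof of Proposition~\ref{G_cofinitary} --- which shows that every reduced word over $\range(\dot e)$ with infinitely many fixed points has $w=\emptyset$ --- would give $w=\emptyset$, a contradiction. Hence $\range(\dot e)$ freely generates $\G=\langle\range(\dot e)\rangle$.

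For the complexity, I would upgrade Proposition~\ref{sigma_range} from single generators to arbitrary words. Put $l(k):=\sum_{m\leq k}|I_m|$, and for $L\geq 1$ and $\vec\epsilon\in\{-1,1\}^L$ set
\[
P_{L,\vec\epsilon}:=\big\{\,\dot e(t_{L-1})^{\epsilon_{L-1}}\circ\cdots\circ\dot e(t_0)^{\epsilon_0}\ \big|\ t_0,\dots,t_{L-1}\in 2^\omega\times 2^\omega\times 2^\omega\,\big\},
\]
so that $\G=\{\id_\omega\}\cup\bigcup_{L\geq 1}\bigcup_{\vec\epsilon\in\{-1,1\}^L}P_{L,\vec\epsilon}$. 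Since $\{\id_\omega\}$ is $\Pi^0_1$ and the index set is computable, it suffices to produce, uniformly in $(L,\vec\epsilon)$, a $\Delta^0_1$ set $U_{L,\vec\epsilon}\sq\injfin$ with
\[
P_{L,\vec\epsilon}=\big\{\,f\in\baire\ \big|\ (\forall k\in\omega)\ f\res l(k)\in U_{L,\vec\epsilon}\,\big\};
\]
then ``$f\in\G$'' becomes the $\Sigma^0_2$ statement ``$f=\id_\omega$ or $(\exists (L,\vec\epsilon))\,(\forall k)\ f\res l(k)\in U_{L,\vec\epsilon}$''.

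The set $U_{L,\vec\epsilon}$ is defined by carrying over the ``recovered from''/``matching'' machinery of Proposition~\ref{sigma_range} from one generator to a word of length $L$: a finite $\bar f$ of interval length $k$ lies in $U_{L,\vec\epsilon}$ iff there are finite approximations $\bar t_j=(\bar x_j,\bar d^0_j,\bar d^1_j)$ and $\bar x_j$-compatible finite functions $\bar g_j$ (for $j<L$) such that $\bar f$ is everywhere within a bound depending only on $L$ of $e(\bar t_{L-1})^{\epsilon_{L-1}}\circ\cdots\circ e(\bar t_0)^{\epsilon_0}$, and such that whenever a point $a$ of $\bar f$ has its path under the composite determined by the approximations --- computed one factor at a time through the clauses defining $\dot e$, using property~(iii) of $\vartheta$ --- fully available within the finite data, then $\bar f(a)$ equals the endpoint of that path. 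As in Proposition~\ref{sigma_range} the two existential quantifiers can be bounded, so $U_{L,\vec\epsilon}$ is $\Delta^0_1$, uniformly. The inclusion $P_{L,\vec\epsilon}\subseteq\{f:(\forall k)\,f\res l(k)\in U_{L,\vec\epsilon}\}$ is immediate, taking sufficiently long initial segments of the $t_j$ and of the $\chi^\dagger(x_j)$ as witnesses at each stage $k$. For the reverse inclusion, given $f$ with $f\res l(k)\in U_{L,\vec\epsilon}$ for every $k$, one extracts by a compactness argument --- exactly as in Proposition~\ref{sigma_range}, where the witnessing approximations $\bar g_j$ need not cohere across stages but their values that actually get used do, so one may take each $\bar g_j$ to be an initial segment of $\chi^\dagger(x_j)$ --- reals $t_0,\dots,t_{L-1}$ with $f=\dot e(t_{L-1})^{\epsilon_{L-1}}\circ\cdots\circ\dot e(t_0)^{\epsilon_0}$: every $a\in\omega$ has a finite path under this word, so for all large $k$ that path is available within $f\res l(k)$ and the condition at such $k$ pins down $f(a)$.

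The main obstacle is the bookkeeping just outlined. Unlike in the single-generator case, the path of a point under a word of length $L$ may travel between the intervals $I_n$; by property~(iii) of $\vartheta$ it can only be pushed into a strictly higher interval by a ``$g$-step'' of a surgery, after which $e$-steps stay level and the inverse $g$-step returns it, but this forces the ``matching'' conditions to be stated along whole paths rather than coordinate-by-coordinate. One then has to check that the bound (roughly $2L$) on path length coming from the definition of $\dot e$, together with the mutual almost disjointness of the sets $D(\cdot)$ and $B_0(\cdot,\cdot,\cdot)$ established in Claim~\ref{ad}, keeps the relevant existential quantifiers finite and the compactness extraction valid. Since the length-one template has already been executed in Proposition~\ref{sigma_range}, I would set up this somewhat heavier version and leave the routine verifications to the reader.
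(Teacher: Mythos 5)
Your proposal is correct and follows essentially the same route as the paper: freeness is extracted from the proof of Proposition \ref{G_cofinitary} (a nonempty reduced word over $\range(\dot e)$ lifts to a nonempty reduced word $w$, so it cannot be $\id_\omega$), and the $\Sigma^0_2$ bound comes from one existential quantifier over the countably many word shapes followed by a uniformly $\Pi^0_1$ recovery of the generators adapting the machinery of Proposition \ref{sigma_range}. The paper likewise leaves the word-level recovery details to the reader, so your more explicit bookkeeping via the sets $U_{L,\vec\epsilon}$ is just a fleshed-out version of the same argument.
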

        
        \begin{proof}
            The proof of Proposition \ref{G_cofinitary} shows that $\G = \langle \range(\dot{e}) \rangle$ is freely generated.
            
            For $h \in S_\infty$, $h$ is in $\G$ precisely when there is a word $w(x_0, \ldots, x_k)$, and some $g_0, \ldots, g_k \in \range(\dot{e})$ so that
            \[
            h = w(g_0, \ldots, g_k).
            \]
            The idea is to use one existential quantifier to guess the word $w(x_0, \ldots, x_k)$ (note that there are countably many words) and then use $\delta$ and the argument from the previous proposition to recover each $g_i$ for $i \in k + 1$ in a $\Pi^0_1$ way so that $h = w(g_0, \ldots, g_k)$. Together, the complexity of membership of $\G$ is $\Sigma^0_2$. We leave the details to the reader.
        \end{proof}
        
        This was the last ingredient needed to complete the proof of Theorem \ref{constructed_mcg}.
    
    \section{Limitations to the construction of a \texorpdfstring{$G_\delta$ mcg}{G-delta}}\label{limitations}

    Since $S_\infty$ is $G_\delta$ in $\baire$, the notions of being $G_\delta$ in $S_\infty$ and in $\baire$ are the same. Moreover, by \cite[Exercise 9.6]{cdst}, for $G \leq S_\infty$ being closed in $S_\infty$ is equivalent to being $G_\delta$. From now on, we will use the expression ``$G_\delta$ mcg'' for $G \leq S_\infty$, which are $G_\delta$ (in either of the spaces) and ``closed mcg'' for $G \leq S_\infty$, which is closed as a subset of $\baire$.
    
    Since the construction of the previous section produces a freely generated mcg, the following result of Dudley (see \cite{dudley} for the original paper and Rosendal's \cite{rosendal_automatic_continuity} for an updated overview) implies that among freely generated mcgs our construction achieves the best possible complexity.
    
    \begin{theorem}[Dudley]\label{no_polish_free}
        There is no Polish topology on the free group with continuum many generators.
    \end{theorem}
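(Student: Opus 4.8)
The plan is to combine a Dudley-style \emph{automatic continuity} statement for homomorphisms into $\Z$ with an elementary counting argument; the former carries essentially all of the difficulty. Write $\kappa:=2^{\aleph_0}$ and suppose, toward a contradiction, that $\tau$ is a Polish topology making $\F(\kappa)$ a topological group.

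I would first dispose of the counting half. An abstract homomorphism $\F(\kappa)\to\Z$ is nothing but an arbitrary function from the $\kappa$ free generators into $\Z$, so $\F(\kappa)$ carries $|\Z|^{\kappa}=2^{\kappa}=2^{2^{\aleph_0}}$ homomorphisms into $\Z$. On the other hand $(\F(\kappa),\tau)$ is separable, and a continuous map from a separable space into the discrete (hence Hausdorff) group $\Z$ is determined by its restriction to any fixed countable dense set; hence there are at most $|\Z|^{\aleph_0}=2^{\aleph_0}$ continuous homomorphisms $(\F(\kappa),\tau)\to\Z$. Since $2^{2^{\aleph_0}}>2^{\aleph_0}$ by Cantor's theorem, some homomorphism $\varphi\colon\F(\kappa)\to\Z$ is not $\tau$-continuous, and everything now reduces to the \emph{automatic continuity statement}: every homomorphism from a Polish group into $(\Z,+)$ (with $\Z$ discrete) is continuous. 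Running the same reduction instead with the identity homomorphism $\F(\kappa)\to\F(\kappa)$ and word length in place of $|\cdot|$ yields the free-group form of Dudley's theorem; the two are proved the same way, and it is the free-group form that is actually invoked for freely generated mcgs.

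The automatic continuity statement is Dudley's genuine contribution, and this is where I expect the work. The idea is an infinite-product construction powered by completeness together with the fact that $0$ is the only infinitely divisible element of $\Z$. Fix a compatible complete metric on the Polish group $G$, let $\varphi\colon G\to\Z$ be a homomorphism, and suppose $\varphi$ is discontinuous at the identity, so there is a sequence $g_n\to1$ with $\varphi(g_n)\neq0$ for all $n$. One recursively extracts a subsequence $(g_{n_k})_k$ and fixes rapidly growing, highly divisible exponents $m_k$ (for instance $m_k=k!$, so that $m_k\mid m_{k+1}$) such that the infinite product $h:=\prod_{k\geq1}g_{n_k}^{m_k}$ converges in $G$; this is possible since $x\mapsto x^{m_k}$ is continuous and $g_n\to1$, so at stage $k$ one may choose $n_k$ large enough that $g_{n_k}^{m_k}$ and all later factors perturb the current partial product only negligibly. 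Writing $u_j:=\prod_{k>j}g_{n_k}^{m_k}$ for the tails, one has $u_j\to1$ and $u_j=g_{n_{j+1}}^{m_{j+1}}u_{j+1}$, so $\varphi(u_j)-\varphi(u_{j+1})=m_{j+1}\varphi(g_{n_{j+1}})\equiv0\pmod{m_{j+1}}$, and telescoping gives $\varphi(u_j)\equiv\varphi(u_{j'})\pmod{m_{j+1}}$ for every $j'>j$. Steering the construction so that the tail images are pinned down --- for example, by reserving a leading factor that can never cancel while controlling the exponents so that $\varphi(h)$ is forced to be divisible by every $m_k$ and yet nonzero --- produces a nonzero integer divisible by all $k!$, which is absurd.

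The main obstacle is exactly this last manoeuvre: coaxing completeness into manufacturing an element whose $\varphi$-image is ``infinitely divisible'' yet nonzero (equivalently, in the free-group formulation, an element that is a proper $n$-th power for unboundedly many $n$). Everything else --- the two cardinality estimates, the telescoping congruences, and the metric bookkeeping needed to make the infinite product converge and its tails vanish --- is routine.
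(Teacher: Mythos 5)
The paper itself gives no proof of this statement --- it is quoted from Dudley \cite{dudley} (see also \cite{rosendal_automatic_continuity}) --- so your proposal has to be measured against Dudley's argument. Your outer reduction is the standard one and is fine: either count homomorphisms into $\Z$ as you do, or, even more directly, apply automatic continuity for discrete free-group targets to the identity map on $(\F(2^{\aleph_0}),\tau)$, which forces $\tau$ to be discrete and hence the group to be countable. The genuine gap is inside the automatic continuity statement, at exactly the step you set aside as ``the last manoeuvre'', and the flat product you set up cannot deliver it. From $h=\prod_k g_{n_k}^{m_k}$ with tails $u_j$, the only constraints a discontinuous $\varphi$ is forced to satisfy are $\varphi(u_j)=m_{j+1}\varphi(g_{n_{j+1}})+\varphi(u_{j+1})$ and $\varphi(h)=\sum_{k\le j}m_k\varphi(g_{n_k})+\varphi(u_j)$. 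These are simultaneously satisfiable by integers for \emph{every} choice of nonzero values $\varphi(g_{n_k})$ and every choice of exponents: fix any value for $\varphi(h)$ and solve for the tail values recursively. Hence no choice of the $m_k$, however fast-growing or highly divisible, and no ``reserved leading factor'', can force $\varphi(h)$ to be nonzero yet divisible by every $m_k$; your telescoping congruences are true but pin nothing down, because the unknown tail image always enters with coefficient $1$.

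Dudley's proof repairs precisely this point by nesting the product rather than flattening it: one builds elements satisfying $h_j=g_{n_j}\,h_{j+1}^{N_j}$, so that $\varphi(h_1)=\varphi(g_{n_1})+N_1\varphi(g_{n_2})+\cdots+N_1\cdots N_{j-1}\varphi(g_{n_j})+N_1\cdots N_j\,\varphi(h_{j+1})$ for every $j$; now the unknown tail image is multiplied by the controlled integer $N_1\cdots N_j$, giving genuine congruences on $\varphi(h_1)$ modulo $N_1\cdots N_j$. The convergence bookkeeping permits $N_j$ to be chosen after $\varphi(g_{n_j})$ is known (it need only be fixed before $g_{n_{j+1}}$ is selected), and a growth/diagonalisation choice of the $N_j$ --- or Dudley's norm inequality, that the word length of $w^n$ is at least $|n|$ for $w\neq e$, which also covers free non-abelian targets and hence the identity-map derivation --- then shows that no integer can satisfy all of these congruences. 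Without this device (or an equivalent idea) the heart of the theorem is missing, so as it stands your proposal does not constitute a proof, even though its counting layer and its metric bookkeeping are correct.
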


    Actually, the work of Slutsky (see Theorem 1.6 of \cite{slutsky_automatic_cont}) gives an even stronger statement.

    \begin{theorem}[Slutsky]\label{no_polish_anywhere_free}
        The only Polish group topology on any free product $G * H$ is the discrete topology (in which case, of course, $G * H$  must be countable).
    \end{theorem}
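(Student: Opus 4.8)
We take $G$ and $H$ to be nontrivial (otherwise $G*H$ is one of the factors, which may carry any Polish group topology). The plan is to argue in the spirit of Dudley's proof of Theorem~\ref{no_polish_free} --- confronting a continuous image of the Cantor space sitting inside the group with the normal-form structure of the free product --- but organised around the Bass--Serre tree, so that it applies to an arbitrary free product. First come some reductions. If $\Gamma:=G*H$ is countable it admits no non-discrete Polish group topology (a non-discrete Polish group has no isolated point, hence is perfect and uncountable), and a discrete Polish group is countable; so it suffices to prove that an uncountable $G*H$ has no Polish group topology at all. Consider the action of $\Gamma$ on its Bass--Serre tree $T$: vertices $\Gamma/G\sqcup\Gamma/H$, edges $\Gamma$, trivial edge stabilisers, vertex stabilisers the conjugates of $G$ and of $H$. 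A direct count of cosets (equivalently, of $G$-vertices within distance $2$ of the base vertex) gives $[\Gamma:G]\ge|\Gamma|$, and likewise $[\Gamma:H]\ge|\Gamma|$, so for uncountable $\Gamma$ both indices are uncountable. Hence if some Polish group topology on $\Gamma$ made a factor an \emph{open} subgroup, its cosets would partition the separable space $\Gamma$ into open pieces, forcing that factor to have countable index --- a contradiction. So the theorem reduces to the claim that \emph{every Polish group topology on $\Gamma=G*H$ makes one of the factors open}.

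To prove this I would argue by contradiction, exploiting completeness. Assume $\tau$ is a Polish group topology on $\Gamma$ making neither factor open; then $\tau$ is non-discrete (else both factors are open), so $(\Gamma,\tau)$ is perfect and completely metrizable. Fix a compatible complete metric and a decreasing symmetric basis $(U_n)$ of identity neighbourhoods with $U_{n+1}U_{n+1}\subseteq U_n$, and pick $u_n\in U_n\setminus\{\id\}$. By the standard thinning argument available in completely metrizable groups, after passing to a subsequence we obtain, for each $\varepsilon\in2^\omega$, a convergent product $x_\varepsilon:=\prod_n u_n^{\varepsilon(n)}\in\Gamma$ with $\varepsilon\mapsto x_\varepsilon$ continuous $2^\omega\to(\Gamma,\tau)$, and --- if the $u_n$ are chosen with an eye on their normal forms --- injective on a Cantor set. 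Now the syllable length on $\Gamma$, and finer $T$-invariants of the $x_\varepsilon$ (translation length, displacement of the base vertex, local syllable pattern), take only countably many values along this family, so a Baire-category argument produces a non-meager piece of $2^\omega$ on which these invariants are constant; feeding it into a Steinhaus-type argument yields a $\tau$-open set $V\ni\id$ all of whose elements are severely constrained from the viewpoint of $T$ --- of bounded translation length, moving the base vertex by a bounded amount, and exhibiting a fixed local syllable pattern.

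The combinatorial heart of the proof is then to show that this constraint forces one of $G,H$ to contain a nonempty $\tau$-open set, hence to be open --- which is the contradiction we are after. Concretely I would expect a ping-pong argument in $T$: a $\tau$-open set of elements that move the base vertex only boundedly and coherently cannot act on $T$ with too much freedom, so they must all fix a common vertex (or end), pinning them into a single conjugate of $G$ or of $H$. It is not even necessary to repeat this with the roles of the factors reversed: it is enough that \emph{some} factor becomes open, since both have uncountable index.

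The hard part is this last step, and it carries two intertwined difficulties. The first is descriptive-set-theoretic: one must verify that the syllable length and the relevant $T$-invariants, restricted to the continuous image of the Cantor space, are definable enough (Borel, or at least with the Baire property) to drive the category argument. The second --- and this is exactly where a general free product goes beyond the free group of Theorem~\ref{no_polish_free} --- is the behaviour of the \emph{elliptic} elements, those conjugate into a factor: such an element has translation length $0$ and, when a factor contains divisible or infinite-order elements, it is not controlled by any length or translation-length function, while the factors are not assumed finitely generated, so there is no word metric to fall back on. This is what forces the argument to use the tree action itself rather than a single well-behaved length function; once a factor is shown to be open, the remaining deductions are the bookkeeping of the first paragraph.
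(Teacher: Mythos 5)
This theorem is not proved in the paper at all: it is quoted from Slutsky (Theorem 1.6 of \cite{slutsky_automatic_cont}), so the only question is whether your sketch itself amounts to a proof. Your first paragraph is fine as far as it goes: countable Polish groups are discrete, both factors of an uncountable $G*H$ have uncountable index, so it suffices to show that any Polish group topology on $G*H$ makes some factor open. The problem is that the route you propose to that claim has its decisive steps missing, and the missing steps are exactly the hard part of Dudley's and Slutsky's theorems.

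Concretely, two gaps. First, your Baire-category/Steinhaus step needs the syllable length, membership in the factors, and the Bass--Serre invariants (translation length, displacement of the base vertex, ``local syllable pattern'') to be Baire measurable with respect to the given topology $\tau$. There is no reason for this: $\tau$ is an arbitrary Polish group topology with no relation whatsoever to the normal-form structure, and the subsets $G,H\subseteq G*H$ need not be Borel or have the Baire property in $\tau$. Likewise, the parenthetical ``if the $u_n$ are chosen with an eye on their normal forms'' cannot be implemented: your only freedom is to pick $u_n\neq 1$ from prescribed $\tau$-neighbourhoods, whose elements' normal forms you do not control, and the normal form of the $\tau$-limit $x_\varepsilon$ of the partial products bears no computable relation to the normal forms of the $u_n$ (limits in $\tau$ are not limits of words). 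You flag this as a ``difficulty'' to be verified, but it is precisely the obstruction that forces Dudley and Slutsky to argue differently: they derive a contradiction from convergence plus normal-form arithmetic of \emph{explicitly constructed} elements (length-growth estimates in \cite{dudley}, the free-product analysis in \cite{slutsky_automatic_cont}), never from measurability of word-theoretic invariants. Second, even granting an identity neighbourhood $V$ of elements with uniformly bounded, coherent displacement in the tree, the concluding ``ping-pong'' is only stated as an expectation; bounded displacement of the \emph{generators} $V$ does not bound the orbits of the open subgroup $\langle V\rangle$, which is what a Serre-type fixed-point argument would require to place $\langle V\rangle$ in a vertex stabiliser. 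As it stands the proposal is a plausible programme, not a proof, and its core steps are not ones that can be waved through.
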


    This means that if there is a $G_\delta$ mcg $G$, all its elements are ``non-free'' from the rest of the group, i.e., for any $g \in G$, there is no $H \leq G$ for which $G = \langle g \rangle * H$. Hence any construction of a $G_\delta$ mcg would have to ensure that there are non-trivial relations between its elements. Achieving this while also making sure that the group is cofinitary seems fairly difficult, but perhaps not impossible. Note that on the other hand it is possible to adapt the techniques in this paper to produce mcgs which are isomorphic to
    \[
    \Asterisk_{x \in \baire} \Z/2\Z,
    \]
    that is, the free product of continuum many copies of the two-element group, $\Z/2\Z$. Nevertheless, all currently known constructions of definable mcgs produce groups which decompose into free products.

    The authors thank Asger T\"{o}rnquist for directing our attention to Dudley's result, through which we discovered Slutsky's work.
    Previously, we developed a method of \emph{definable gluing of orbits}. The idea was the following. It is not hard to prove that there can be no $G_\delta$ cofinitary group $G$ which has exactly one $k$-orbit for all $k \geq 1$ (here we count as $k$-orbits of $G$ the orbits under the diagonal action of $G$ on $(\omega)^k$). We developed a method of gluing $k$-orbits (for all $k \geq 1$) while preserving a certain notion of definability and while also preserving cofinitariness. Then we proved that for an mcg, which has a nice catching function (this is a natural condition based on the known constructions of mcgs, and which implies that the group is $G_\delta$) and which has a small degree of freeness, we can define from it a $G_\delta$ mcg with just one $k$-orbit for all $k \geq 1$, resulting in a contradiction. We do not describe this idea in further detail, as Theorem \ref{no_polish_anywhere_free} implies a stronger statement.

    \section{Open problems}\label{open_problems}
    
    The most important open question is the longstanding:
    
    \begin{question}\label{q_G_delta}
        Is there a $G_\delta$ mcg?
    \end{question}

    If one is convinced that the answer to Question \ref{q_G_delta} is negative, then a great step forward is to first show that the answer to the next open question is negative.
    
    \begin{question}
        Is there a closed (in the Baire space) mcg?
    \end{question}
    
    Note that in \cite{kastermans_orbits}, Kastermans proved that every mcg has only finitely many orbits. For $k \geq 1$ and $G \leq S_\infty$, the \emph{$k$-orbits} of $G$ are the orbits of the diagonal action of $G$ on $(\omega)^k$. The usual orbits are hence 1-orbits. It is unknown whether the following is possible (this was already asked in \cite{kastermans_orbits}).
    
    \begin{question}
        Can there be an mcg with infinitely many $k$-orbits for some $k > 1$?
    \end{question}
    
    Call $x \in (\omega \cup \{\omega\})^\omega$ \emph{increasing} if for every $n \in \omega$ it holds that $x(n + 1) \geq x(n)$. For an mcg $G$, define $x_G \in \baire$ by setting $x_G(0) := 0$ and for $n \geq 1$ that
    \[
    x_G(n) := \text{number of } n \text{-orbits of } G.
    \]
    
    \begin{question}
        Which increasing $x \in (\omega \cup \{\omega\})^\omega$ arise as $x_G$ for some mcg $G$?
    \end{question}

    \subsection{Maximal finitely periodic groups}
    
    Say that $g \in S_\infty \setminus \{\id_\omega\}$ is \emph{finitely periodic} if $\langle g \rangle \sq S_\infty$ has finitely many finite orbits. A subgroup $G \leq S_\infty$ is then called \emph{finitely periodic}, if every $g \in G \setminus \{\id_\omega\}$ is finitely periodic. Clearly, every finite periodic group is also cofinitary. We use the abbreviation \emph{mpg} to refer to maximal finitely periodic groups. Of course, the first question asked by any descriptive-set-theorist is the following.

    \begin{question}
        Is there a Borel mpg?
    \end{question}

    Note that our construction of a $\Sigma^0_2$ mcg cannot be immediately adapted to this setting, as the construction relies on the sequence of finite intervals $(I_n)_{n \in \omega}$. In particular, for any $x \in 2^\omega \setminus \range(\chi)$ and any $d^0, d^1 \in 2^\omega$, we have that $\dot{e}(x, d^0, d^1)$ is not finitely periodic. Nevertheless, the authors believe that the answer to the question is positive.

    We next present a slight variant of a notion appearing in Kastermans' \cite{kastermans_compact} and \cite{kastermans_orbits}. For $G \leq S_\infty$ and $A \sq G$ with $\id_\omega \in A$ (for convenience), we write $W_A(x)$ for the set of all finite words in the variable $x$ and elements of $A$. Formally,
    \[
        W_A(x) = \big\{ g_k x^{i_k} g_{k-1} x^{i_{k - 1}} \cdots x^{i_0} g_0 \, \big| \, g_0, \ldots, g_k \in A, i_0, \dots, i_k \in \Z \setminus \{0\} \big\}.
    \]
    For $h \in S_\infty$ and $w(x) \in W_A(x)$, we let $w(h)$ be the permutation obtained by substituting all occurrences of $x$ by $h$.

    \begin{theorem}\label{no_infinite_orbits}
        There is no mpg with infinitely many orbits.
    \end{theorem}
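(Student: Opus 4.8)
The plan is to mimic Kastermans' argument (from \cite{kastermans_orbits}) that no mcg can have infinitely many orbits, adapting it to the finitely periodic setting. Suppose for contradiction that $G$ is an mpg with infinitely many orbits. The strategy is to find a permutation $h \in S_\infty$ which is finitely periodic, which together with $G$ generates a finitely periodic group, but which is not in $G$, contradicting maximality. The natural choice is to let $h$ permute the orbits of $G$: pick an infinite sequence of orbits $(O_i)_{i \in \omega}$ of $G$, fix points $a_i \in O_i$, and define $h$ to send (a carefully chosen piece of) $O_i$ to (a carefully chosen piece of) $O_{i+1}$ in a way that is compatible with the $G$-action — for instance, enlarge $G$ by an element conjugating successive orbits. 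One must be careful to cycle through the orbits so that $h$ itself has only finitely many finite orbits (e.g.\ by arranging the $O_i$ into a single bi-infinite $h$-orbit, or finitely many of them).

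The first step would be to set up the bookkeeping: enumerate infinitely many $G$-orbits $O_0, O_1, \dots$, and within each $O_i$ fix a ``base point'' $a_i$ so that the stabilizer data lines up, i.e.\ so that the partial map $a_i \mapsto a_{i+1}$ extends $G$-equivariantly to a bijection $\phi_i \colon O_i \to O_{i+1}$ (this uses that any two orbits of a group, viewed as $G$-sets, with a chosen point each, are isomorphic precisely when the point stabilizers are conjugate — so one should first pass to a sub-sequence of orbits on which the stabilizers $\stab_G(a_i)$ are ``the same'' in the relevant sense, or simply take $h$ to move only the points with trivial relevant stabilizer). The second step is to assemble the $\phi_i$ into a single permutation $h$ of $\omega$ (acting as identity off $\bigcup_i O_i$, or better, cyclically so that $h$ is genuinely finitely periodic — here one may need to collapse the bi-infinite chain into finite cycles of growing length, or accept a bi-infinite orbit for $h$ which is automatically fine since $\langle h\rangle$ then has \emph{no} finite orbits on $\bigcup O_i$). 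The third step is the crucial verification: every element of $\langle G \cup \{h\}\rangle \setminus \{\id_\omega\}$ is finitely periodic. Using the normal form of elements of $\langle G, h\rangle$ as words $g_k h^{i_k} \cdots h^{i_0} g_0$ with $g_j \in G$, one analyzes the finite orbits of such a word: the key point is that $h$ conjugates $G$-orbits forward, so a word with nonzero total $h$-exponent permutes the orbit-indices by a nonzero shift and hence can have no finite orbits at all except on the finitely many ``boundary'' orbits; and a word with zero total $h$-exponent, after conjugating, lies in a finite extension of $G$ (or in $G$ itself on a tail), hence is finitely periodic because $G$ is. The final step is to note $h \notin G$: since $G$ has infinitely many orbits and $h$ was built to merge infinitely many of them, $h$ does not preserve the $G$-orbit partition whereas every element of $G$ does, so $h \notin G$, contradicting maximality of $G$.

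The main obstacle I expect is the third step — controlling the finite orbits of arbitrary words in $G$ and $h$, in particular handling words of zero $h$-exponent. Unlike the cofinitary case, where ``finitely many fixed points'' is robust under the relevant manipulations, here one must track \emph{finite orbits of all sizes}, and a product of a $G$-element with an $h$-conjugate of another $G$-element need not obviously be finitely periodic. The resolution should be to choose the orbits $O_i$ and the maps $\phi_i$ so conservatively (e.g.\ so that $h$ commutes with $G$ on all but finitely many orbits, by only moving points whose $G$-stabilizer behaves uniformly, or by taking $h$ to be a product of a $G$-equivariant ``shift'' with something central-like) that every zero-exponent word reduces, off a finite set, to an element of $G$ composed with finitely many controlled pieces — then finite periodicity of $G$ finishes it. A secondary subtlety, also present in Kastermans' original proof, is ensuring the chosen sub-sequence of orbits is infinite after thinning to make stabilizers compatible; since there are only countably many conjugacy types of closed subgroups arising and infinitely many orbits, an infinite monochromatic sub-sequence exists by pigeonhole, so this is routine.
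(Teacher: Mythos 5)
Your overall plan---extend $G$ by a permutation $h$ that moves points between distinct $G$-orbits and then verify that every word in $G$ and $h$ is finitely periodic---matches the paper's, but your implementation via a $G$-equivariant orbit shift has genuine gaps. First, the thinning step is not routine: to define $\phi_i(g\cdot a_i):=g\cdot a_{i+1}$ you need the stabilizers $\stab_G(a_i)$ to agree (up to conjugacy) along an infinite subfamily of orbits, and your pigeonhole argument is invalid---there are countably many orbits and possibly infinitely many distinct stabilizer types, and countably many colours on a countably infinite set need not admit an infinite monochromatic subsequence. Nothing guarantees infinitely many pairwise isomorphic $G$-orbits, nor infinitely many orbits with trivial stabilizer, so the equivariant shift you want may simply not exist. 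Second, even granting such a subfamily, $h$ has to be a finitely periodic permutation of all of $\omega$: taking it to be the identity off $\bigcup_i O_i$ gives $h$ infinitely many fixed points, and whatever you do on the remaining (infinitely many) orbits re-raises the same problem there. Relatedly, your treatment of zero-$h$-exponent words only works where $h$ commutes with the $G$-action, and the claim that such a word lies in a ``finite extension of $G$, hence is finitely periodic because $G$ is'' begs the question---finite periodicity is exactly the property that is not inherited by extensions, which is why maximality is an issue at all. (Your nonzero-exponent analysis is fine \emph{under} full equivariance; it is the equivariance that is unjustified.)

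The paper sidesteps all of this by building $h$ generically rather than equivariantly: $h$ is an increasing union of finite partial injections, where at each step the least number missing from the domain or range is matched with the minimum of a $G$-orbit not yet touched by the construction. No stabilizer bookkeeping is needed, $h$ is a permutation, and $h\notin G$ because it alters the orbit structure. The verification then follows Kastermans' Theorem 13: for a reduced $w(x)\in W_G(x)\setminus\{\emptyset\}$ one checks that the reduced forms of $w(x)^k$ all lie in $W_{A\cup\{g^k\}}(x)$ for a fixed finite $A\sq G$ and a fixed $g\in G$; a point in a finite orbit of $w(h)$ is a fixed point of some $w_{k}(h)$, and the structure of the word forces a fixed point of some element of $A$ (or a finite orbit of $g$), with distinct finite orbits of $w(h)$ producing distinct witnesses. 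By the pigeonhole principle, infinitely many finite orbits of $w(h)$ would give some element of $A$ infinitely many fixed points or give $g$ infinitely many finite orbits, contradicting finite periodicity of $G$. To salvage your route you would have to prove the existence of infinitely many isomorphic orbits (and handle the rest of $\omega$ coherently), which is precisely the structural information the generic construction lets one avoid.
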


    \begin{proof}
        The proof is based on the proof of Kastermans' Theorem 13 from \cite{kastermans_orbits}. Due to the similarity, we omit some details.
        
        Let $(O_i)_{i \in \omega}$ be some fixed enumeration of all orbits of a finitely periodic group $G$. We define $h \in S_\infty$ inductively through increasingly larger finite approximations. Let $h_0 := \emptyset$ and suppose we have defined a finite partial injective function $h_k$ for some $k \in \omega$. Then let
        \[
        n := \min\big((\omega \setminus \dom(h_k)) \cup (\omega \setminus \range(h_k))\big),
        \]
        $j$ be the least such that $O_j \cap (\dom(h_k) \cup \range(h_k) \cup \{n\}) = \emptyset$ and $m := \min O_j$. If $n \notin \dom(h_k)$, set $h_{k+1} := h_k \cup \{(n, m)\}$, otherwise set $h_{k+1} := h_k \cup \{(m, n)\}$. By definition,
        \[
        h := \cup \{h_k \, | \, k \in \omega\} \in S_\infty
        \]
        and since the orbit structure of $\langle G \cup \{h\}\rangle$ is different from the orbit structure of $G$, it follows that $h \notin G$.

        Take any reduced $ w(x) \in W_G \setminus \{\emptyset\}$; we will show that $w(h)$ is finitely periodic.
        Note that when we reduce $w(x)^2$, strictly less than half of each instance of $w(x)$ gets annihilated (as otherwise $w(x) = \emptyset$). We consider the following cases:
        \begin{enumerate}
            \item $w(x)$ is of odd length $2 l + 1$, where $g \in G$ is at the $(l+1)$-th place and when reducing $w(x)^2$, we annihilate $l$ characters from each instance of the word, so that now there is $g^2$ at the $(l+1)$-th place in the reduced $w(x)^2$;
            \item same as case (1), but with $x^i$ (where $i \in \Z \setminus \{0\}$) in place of $g$;
            \item when we reduce $w(x)^2$, the resulting word is strictly longer than $w(x)$.
        \end{enumerate}
        In order so that we do not have to separately consider all three different cases, we observe that the following captures all of them: there is a finite set $A \sq G$ and $g \in G$ so that for every $k \in \omega$, the reduced form of $w(x)^k$ is equal to some word $w_k(x) \in W_{A \cup \{g^k\}}$.

        Suppose that $n \in \omega$ is in a finite orbit of $\langle w(h) \rangle$. Then there is some $k_n$, so that $n$ is a fixed point of $w_{k_n}(h)$. Using the argument of Theorem 13 from \cite{kastermans_orbits}, there is some $g_n \in A \cup
        \{g\}$, so that if $g_n \in A$ then it has a fixed point $m_n$, and if $g_n = g$ then $m_n$ is a fixed point of $g^{k_n}$. Similarly as in \cite{kastermans_orbits}, we observe that for $n_0, n_1$ which belong to different finite orbits of $\langle w(h)\rangle$, either $g_{n_0} \neq g_{n_1}$, or $g_{n_0} = g_{n_1}$ and $m_{n_0} \neq m_{n_1}$ (to be precise, like in \cite{kastermans_orbits}, we need to consider each $g_{n_i}$ together with its position in $w_{k_{n_i}}(x)$). By the Pigeonhole principle, if there are infinitely many finite orbits of $\langle w(h)\rangle$, there must either be some $f \in A$ which has infinitely many fixed points or $g$ has infinitely many finite orbits. This contradicts the assumption on $G$. Since $w(x)$ was arbitrary, we have proved that $G \cup \{h\}$ generates a finitely periodic group. In particular, $G$ is not maximal.
    \end{proof}

    Recall, that a set $A \sq \baire$ is \emph{eventually bounded} if there is some $x \in \baire$, so that for every $y \in A$ there is some $n \in \omega$ such that for every $m > n$ it holds that $y(m) < x(m)$. Kastermans proved in \cite{kastermans_compact} (see Theorem 10) that no mcg can be eventually bounded.

    \begin{question}
        Can an mpg be eventually bounded?
    \end{question}

    At first sight it seems likely that an adaptation of the proof of Theorem 10 from \cite{kastermans_compact}, similar to the way we adapted Kastermans' argument in the proof of Theorem \ref{no_infinite_orbits}, would be successful in proving that no mpg can be eventually bounded. However, it turns out that the straightforward modification does not work this time. The authors nevertheless believe the answer should be negative.

    If $G$ is an mcg, which is finitely periodic, it is clearly an mpg. The authors believe however that it is possible to find an example, affirmatively answering the following question.
    
    \begin{question}
        Is there an mpg, which is not an mcg?
    \end{question}

    \bibliographystyle{alpha}
    \bibliography{bibliography}

\end{document}